\documentclass[11pt,reqno]{amsart}
\usepackage{t1enc}
\usepackage[latin1]{inputenc}
\usepackage[english]{babel}
\usepackage{amsmath,amsthm}
\usepackage{amssymb}
\usepackage{amsfonts}
\usepackage{latexsym}
\usepackage[dvips]{graphicx}
\usepackage{graphicx}
\usepackage{float}
\usepackage[natural]{xcolor}
\usepackage{algorithm}
\usepackage{algorithmic}
\usepackage{enumerate}
\usepackage{appendix}
\usepackage{multirow}
\usepackage{xcolor}
\usepackage[colorlinks,linkcolor=blue]{hyperref}
\usepackage{lineno}
\usepackage{setspace}
\usepackage{comment}
\usepackage{fullpage}
\usepackage[shortlabels]{enumitem}

\usepackage{listings}
\newtheorem{theorem}{Theorem}\numberwithin{theorem}{section}
\newtheorem{lemma}[theorem]{Lemma}\numberwithin{theorem}{section}
\newtheorem{proposition}[theorem]{Proposition}
\numberwithin{theorem}{section}

\newtheorem{example}[theorem]{Example}
\newtheorem{definition}[theorem]{Definition}

\newtheorem{claim}[theorem]{Claim}

\def\int{\textrm{int}}

\begin{document}

\onehalfspacing
\title{On the number of factorable induced subgraphs}

\author{Jie Han}
\address{JH, BW and JZ. School of Mathematics and Statistics, Beijing Institute of Technology, China\\
Email: \texttt{(JH) han.jie@bit.edu.cn, (BW) bin.wang@bit.edu.cn, (JZ) jingwen.zhao@bit.edu.cn.}}
\author{Bin Wang}
\author{Jingwen Zhao}

\begin{abstract}
Let $F$ be an $r$-vertex graph. 
In this paper, we study the $F$-factor problem in random induced subgraphs of dense graphs.
We show that for any $r$-vertex graph $F$ and $\gamma>0$, if $H$ is an $n$-vertex graph with minimum degree at least $(1-1/\chi_{cr}(F)+\gamma)n$, then for every fixed $p \in (0,1)$, the random induced subgraph $H[p]$ contains an $F$-factor with probability at least $1/(rq)-o_n(1)$, where $q\in \mathbb{N}$ is the order of certain coset group defined from $H$. 
The probability is asymptotically best possible  for infinitely many $F$ and $H$ and yields that a $1/(rq)-o_n(1)$ proportion of the subsets of $H$ induce $F$-factors, interestingly, regardless of whether $H$ itself admits an $F$-factor.
Similar results are obtained for perfect matchings in hypergraphs under minimum degree conditions.
Our proof combines concentration inequalities, lattice point counting in $\mathbb{Z}^d$ and structural theorems for $F$-factors in dense (hyper)graphs.
\end{abstract}

\maketitle

\section{Introduction}
\subsection{Factors in (hyper)graphs}
Given a $k$-uniform hypergraph $F$, an $F$-packing in $k$-graph $H$ is a collection of vertex-disjoint copies of $F$ in $H$. 
An $F$-packing is called perfect if it covers all vertices of $H$. 
A perfect $F$-packing is also called an $F$-factor. 
A classical line of research in extremal graph theory is to determine Dirac-type conditions for spanning substructures in hypergraphs. 
If $F$ has a component with at least 3 vertices then the question whether $H$ has an $F$-factor is difficult from both structural and
algorithmic points of view: Tutte's theorem characterizes those graphs which
have an $F$-factor if $F$ is an edge but for other graphs $F$ no such characterization exists. 
Moreover, Hell and Kirkpatrick \cite{MR707416} showed that the
decision problem whether a graph $H$ has an $F$-factor is NP-complete
if and only if $F$ has a component with at least 3 vertices.
This motivates the research for simple sufficient conditions ensuring the
existence of an $F$-factor.
The following classical result of Hajnal and Szemer\'{e}di characterizes the minimum degree
that ensures a graph contains a $K_r$-factor.
\begin{theorem}[Hajnal and Szemer\'{e}di~\cite{MR297607}]
     Every graph $G$ whose order $n$ is divisible by $r$ and whose minimum degree satisfies $\delta(G)\geq (1-1/r)n $ contains a $K_r$-factor.
\end{theorem}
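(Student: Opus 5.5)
The plan is to prove the Hajnal--Szemer\'edi theorem by induction on $r$, using an augmenting-path (recolouring) argument in the style of the short proof by Kierstead and Kostochka. The statement is equivalent to an equitable-colouring statement for the complement $\bar G$. Since $\delta(G)\ge (1-1/r)n$ with $n=rm$, every vertex of $\bar G$ has degree at most $n/r-1<m$. A $K_r$-factor of $G$ is exactly a partition of $V$ into $m$ independent sets of $\bar G$, each of size $r$. It is cleaner, though, to argue directly in $G$ with the equivalent form: if $\Delta(\bar G)\le m-1$ then $\bar G$ has an equitable $m$-colouring with $m$ classes of size $r$. I will prove this by induction on the number of edges of $\bar G$. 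The base case, $\bar G$ empty, is trivial: any partition into $m$ sets of size $r$ works.

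For the induction step, remove an edge $xy$ of $\bar G$. This keeps the degree bound, so by induction we get an equitable $m$-colouring of $\bar G-xy$. If $x$ and $y$ receive different colours we are done. Otherwise move $y$ out of its class. Since $\deg_{\bar G}(y)\le m-1$, some class $W$ has no neighbour of $y$. Placing $y$ there gives a proper colouring that is \emph{nearly equitable}: one class $V^-$ has size $r-1$, one class $V^+$ has size $r+1$, and all others have size $r$.

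The key step is a lemma stating that a nearly equitable colouring of a graph with $\Delta\le m-1$ can be converted into an equitable one. Define an auxiliary digraph on the classes. Put an arc $V\to W$ if some vertex of $V$ has no neighbour in $W$, so that it can move to $W$. Let $\mathcal A$ be the set of classes that can reach $V^-$. If $V^+\in\mathcal A$, shifting one vertex along each arc of a path from $V^+$ to $V^-$ finishes the proof. Otherwise set $a=|\mathcal A|$ and $b=m-a$, let $A=\bigcup\mathcal A$, and let $\mathcal B$ be the remaining classes, with union $B$. Every vertex $z\in B$ has at least one neighbour in each class of $\mathcal A$, so $\deg(z)\ge a$. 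Counting edges between $A$ and $B$ gives $|B|a \le e(A,B)$. I will then distinguish \emph{solo} neighbours: a vertex $z\in B$ and a class $X\in\mathcal A$ with $N(z)\cap X=\{w\}$ a single vertex. Swapping $z$ into $X$ and moving $w$ elsewhere creates new arcs.

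The main obstacle is this final counting-and-swapping argument. The plan there follows Kierstead--Kostochka. Consider the terminal classes of $\mathcal A$, those from which every vertex can reach $V^-$ after suitable moves. Show that some vertex $w$ in such a class has few neighbours in $B$, bounded via $\Delta\le m-1$ and the edge count above. Then $w$ has a class in $\mathcal B$ that it can move into, while a solo neighbour $z$ replaces $w$. Iterate, applying the induction hypothesis to the smaller configuration on $\mathcal B\cup\{V^+\}$ if necessary, that is, to $G[B]$ with fewer colours. Each such operation strictly enlarges $\mathcal A$ or reduces a potential function, so the process terminates in the case $V^+\in\mathcal A$. The delicate part is the inequality showing that a suitable $w$ with at most one solo neighbour in $B$ exists, which uses $\sum_{w\in A}\deg_B(w)\le |A|(m-1)-(\text{edges inside }A)$ together with $|A|=ar-1$. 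I expect to spend most of the effort getting this estimate exactly tight at the threshold $(1-1/r)n$.
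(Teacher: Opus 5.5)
The paper gives no proof of this statement; it is quoted from Hajnal and Szemer\'edi as background. So your proposal has to stand on its own, and at its core it does not. The reduction is fine: $n=rm$ gives $\Delta(\bar G)\le m-1$, and an equitable $m$-colouring of $\bar G$ is a $K_r$-factor of $G$. The edge-deletion step producing a nearly equitable colouring and the case $V^+\in\mathcal A$ are also fine.

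The case $V^+\notin\mathcal A$ is the entire difficulty, and there you only list ingredients, several of them wrongly.

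\begin{itemize}
\item Terminal classes must be those $V\in\mathcal A$ such that every other class of $\mathcal A$ still reaches $V^-$ in $\mathcal H-V$. This is what makes the finishing move work. Suppose $w\in V$ has a solo neighbour $z\in B$ (so $N_V(z)=\{w\}$), and $w$ has no neighbour in some $W\in\mathcal A\setminus\{V\}$. Put $z$ into $V$, put $w$ into $W$, and shift along a $W$--$V^-$ path in $\mathcal H-V$. This transfers exactly one vertex from $B$ to $A$.
\item After that move, $\bar G[B\setminus\{z\}]$ has exactly $br$ vertices and maximum degree at most $b-1$, because each vertex of $B$ has a neighbour in every class of $\mathcal A$ and $\Delta(\bar G)\le a+b-1$. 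It must then be recoloured with the $b$ colours of $\mathcal B$. That needs the theorem for fewer colours, i.e.\ an outer induction on the number of colours $m$. Your set-up inducts on $r$, or on $e(\bar G)$ with $m$ fixed, and neither gives this.
\item Your swap, in which $w$ moves into a class of $\mathcal B$ and $z$ replaces it, leaves $|A|=ar-1$ and in general unbalances $\mathcal B$. You give no reason why $\mathcal A$ grows or which potential drops, so neither progress nor termination is shown.
\end{itemize}

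More seriously, the existence of the finishing configuration is exactly what has to be proved, and the counting you indicate does not give it. Take a terminal $V\ne V^-$. Every $y\in B$ has a neighbour in $V$, and those with exactly one are solo neighbours. Writing $S_w$ for the solo neighbours of $w$, this gives $e(V,B)\ge 2(br+1)-\sum_{w\in V}|S_w|$. Now suppose no solo $w\in V$ could move to another class of $\mathcal A$. Then each solo $w$ satisfies $\deg_B(w)\le b$, hence $|S_w|\le b$. A non-solo $w$, however, may have $\deg_B(w)$ as large as $a+b-1$. The resulting inequality $2(br+1)\le r\max\{2b,a+b-1\}$ is contradictory only when $a\le b+1$.

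When $\mathcal A$ has many more classes than $\mathcal B$, a genuinely different argument is needed; Kierstead and Kostochka handle this regime as a separate case. Nothing in your plan supplies it. The criterion of a vertex with `at most one solo neighbour' is not the relevant property. The bound $\sum_{w\in A}\deg_B(w)\le |A|(m-1)-(\text{edges inside }A)$ should subtract $2e(\bar G[A])$. The potential is never specified. So the proposal has a gap precisely at the heart of the proof.
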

Koml\'os, S\'ark\"{o}zy and Szemer\'edi \cite{KSS2001factor} generalized the result to arbitrary $F$-factors.
They proved that for every graph $F$ there exists a constant $C=C(F)$ such that every graph $G$ whose order $n$ is divisible by $|V(F)|$ and whose minimum degree is at least $(1-1/\chi(F))n+C$ contains an $F$-factor.
This confirmed a conjecture of Alon and Yuster \cite{alonyuster}, who had obtained the above result with an additional error term of $\varepsilon n$ in the minimum degree
condition. 

As observed in \cite{alonyuster}, there are graphs $F$ for which the above constant $C$ cannot be omitted completely. 
However, K\"{u}hn and Osthus \cite{kuhnothus} proved that for some graphs the minimum degree threshold can be improved significantly by replacing the chromatic number $\chi(F)$ with a refined parameter known as the critical chromatic number $\chi_{cr}(F)$.
The \emph{critical chromatic number} $\chi_{cr}(F)$ of a graph $F$ was introduced by Koml\'{o}s \cite{komlos2000almost}, which
is defined as $\chi_{cr}(F)=\frac{(\chi(F)-1)|V(F)|}{|V(F)|-\sigma(F)},$
where $\sigma(F)$ denotes the minimum size of the smallest colour class in a $\chi(F)$-colouring of $F$. 
Note that $\chi(F)-1 < \chi_{cr}(F) \le \chi(F),$ and equality holds if and only if, for every $\chi(F)$-colouring of $F$, all colour classes have the same size. 
Furthermore, Koml\'{o}s \cite{komlos2000almost} proved that if an $n$-vertex graph $G$ satisfies the degree condition $\delta(G)\ge \left(1-1/\chi_{cr}(F)\right)n$ for sufficiently large $n$, then $G$ contains an almost $F$-factor.
More recently, Han and Treglown \cite{han2020complexity} established a polynomial algorithm which determines whether $G$ with minimum degree condition $\delta(G)\ge \left(1-1/\chi_{cr}(F)\right)n$
contains an $F$-factor.


In hypergraphs, a major milestone in this area is the work of R\"{o}dl, Ruci\'{n}ski and Szemer\'{e}di \cite{RSS2009perfect}, who determined the minimum codegree threshold for perfect matchings in large uniform hypergraphs and introduced the absorbing method into the subject. 
Let $k\geq2$, a $k$-graph $H=(V,E)$ consists of a vertex set of order $n$ and an edge set $E\subseteq\binom{V}{k}$.
For any $(k-1)$-subset $S$ of $V$, the \emph{codegree} of $S$, denoted by $\deg_H(S)$ is the number of edges containing $S$.
The \emph{minimum} \emph{codegree} $\delta_{k-1}(H)$ is the minimum of $\deg_H(S)$ over all $(k-1)$-subsets $S$ of $V$.
They determined
the minimum codegree threshold that ensures a perfect matching in a $k$-graph on
$n$ vertices for all $k\ge3$ and sufficiently large $n\in k\mathbb{N}$. The threshold is $n/2-k+C$,
where $C\in\{3/2,2,5/2,3\}$ depends on the values of $n$ and $k$. 
They also showed that the condition $\delta_{k-1}(H) \ge n/k+O(\log n)$ is sufficient to guarantee a matching covering all but at most $k$ vertices of $H$, i.e. one edge away from a perfect matching.
They conjectured that $\delta_{k-1}(H)\ge n/k$ suffices for this, which was recently proved by Han \cite{Han2015near}. 
Let $H$ be a $k$-graph on $n$ vertices, with minimum codegree at least $n/k+cn$ for some fixed $c>0$, 
Keevash, Knox and Mycroft \cite{KKM2015matchingpoly}
constructed a polynomial-time algorithm which finds either a perfect matching in $H$ or a certificate that none exists. 
This essentially solves a problem of Karpi\'{n}ski, Ruci\'{n}ski and 
Szyma\'{n}ska \cite{KRZ2010matching}.


More generally, Han and Treglown \cite{han2020complexity} established a general lattice-based criterion for perfect matchings and factors in dense hypergraphs, and used it to obtain algorithmic and structural consequences. Their result shows, roughly speaking, that once an appropriate minimum $\ell$-degree condition and a suitable structural information are available, the existence of a factor is governed by the solubility of an associated lattice system (see details in Section \ref{sec:generalfra}).

\subsection{Factors in random induced subgraphs}
An important topic in graph theory is studying when certain classical theorems
hold in a ``robust'' or ``resilient'' way, according to various possible interpretations of these terms. While many classical results can be interpreted as part of this direction, it was first highlighted as a topic for systematic study by Sudakov and Vu \cite{sudakov2008robust}. 

Let $H$ be a $k$-graph with a property $\mathcal{P}$.
On the one hand, one can sample each edge of $H$ uniformly at random with probability $p$ to obtain a binomial subhypergraph of $H$.
A central topic is to study the threshold for the property $\mathcal{P}$ of this binomial subhypergraph of $H$.

On the other hand, one can sample vertices rather than edges at random.
For a finite set $V$ and $p\in [0,1]$, let $V_p\subseteq V$ be obtained by including each element of $V$ independently with probability $p$.
Given a $k$-graph $H=(V, E)$ and $p\in [0,1]$, let $H[p]:=H[V_p]$ be a random induced subgraph of $H$.
We focus on this model and investigate the probability that the induced subgraph on such a random vertex 
set $H[p]$ still has graph property $\mathcal{P}$.
The study of induced random subgraphs has recently attracted considerable attention.
 In 1996, in his last paper, Erd\H{o}s \cite{MR1684620} asked the following question that he formulated together
with Faudree: is there a positive $c$ such that any $(n+1)$-regular graph $G$ on $2n$ vertices contains at least $c2^{2n}$ distinct vertex-subsets $S$ that are cyclic, meaning that there is a cycle in $G$ using precisely the vertices in $S$. 
This was recently solved for large $n$ by Dragani\'{c}, Keevash and M\"{u}yesser \cite{MR4935989}, who showed that a uniformly random vertex subset of an $(n + 1)$-regular $2n$-vertex graph induces a Hamiltonian graph with probability at least $1/2$. 
Throughout this paper, we write $o_m(1)$ to denote a quantity that tends to 0 as $m$ tends to $\infty$. 
When $m$ is clear from the context, we omit the subscript.
\begin{theorem}[Dragani\'{c}, Keevash and M\"{u}yesser~\cite{MR4935989}]
    Any $(n+1)$-regular graph $G$ on $2n$ vertices has $\mathbb{P}[G[1/2]\text{\ is\ Hamiltonian}]>1/2-o_n(1)$.
\end{theorem}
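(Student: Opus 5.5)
The natural approach is the \emph{stability method}: either $G$ is far from every extremal configuration, in which case $G[1/2]$ is Hamiltonian with probability $1-o(1)$, or $G$ is close to one of a few explicit structures, in which case we analyse $G[1/2]$ by hand. Write $S=V_{1/2}$.

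Chernoff bounds and a union bound give, with probability $1-o(1)$,
\begin{itemize}
\item $|S|=n\pm O(\sqrt{n\log n})$;
\item every vertex has $(n+1)/2\pm O(\sqrt{n\log n})$ neighbours in $S$;
\item the same concentration holds for the number of edges of $G[S]$ between any two fixed large sets.
\end{itemize}
So $G[S]$ has minimum degree $|S|/2-O(\sqrt{n\log n})$. This sits just below the Dirac threshold, so Dirac's theorem alone does not suffice, and we split according to the structure of $G$.

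\emph{Non-extremal case.} Suppose $G$ is a \emph{robust expander}: every set $X$ with $\varepsilon n\le |X|\le (1-\varepsilon)2n$ has a robust neighbourhood of size at least $|X|+\nu n$. Robust expansion is inherited by random vertex subsets with high probability, by concentration applied to each robust neighbourhood count. A linear-minimum-degree robust expander is Hamiltonian by the theorem of K\"uhn, Osthus and Treglown. Hence in this case $G[S]$ is Hamiltonian with probability $1-o(1)$.

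\emph{Extremal cases.} A standard stability argument shows that an $(n+1)$-regular graph on $2n$ vertices which is not a robust expander is $\varepsilon$-close to one of two structures:
\begin{itemize}
\item two disjoint cliques of order about $n$ (the ``disconnected-like'' case);
\item the complete bipartite graph $K_{n,n}$ (the ``bipartite-like'' case).
\end{itemize}
In the two-cliques case, regularity forces linearly many cross edges, about $2n$. With high probability $S$ keeps two independent cross edges. The two halves are each near-complete after a cleaning step that places the $o(n)$ atypical vertices on paths greedily. Therefore $G[S]$ is Hamiltonian with probability $1-o(1)$.

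\emph{Bipartite-like case.} Here $G$ is close to $K_{n,n}$ with parts $A,B$. Write $a=|S\cap A|$ and $b=|S\cap B|$, and suppose $a\ge b$. A Hamilton cycle in $G[S]$ exists, after cleaning exceptional vertices, exactly when $G[S\cap A]$ contains a linear forest with $a-b$ edges; the exceptional vertices are absorbed by short paths. The difference $a-b$ is of order $\sqrt n$ and roughly symmetric. Regularity, together with $|A|+|B|=2n$, constrains the internal edges: the count $e(A)-e(B)$ is determined by $|A|-|B|$. I expect that at least one of the two sides, say $A$, carries enough internal edges, spread over enough vertices, to support linear forests of size $\omega(\sqrt n)$ after sampling.

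The event that the random imbalance falls on the side that can absorb it then has probability at least $1/2-o(1)$ by symmetry of $a-b$ about $0$. When both sides have many internal edges the probability is close to $1$. The tight example $K_{n+1,n-1}$ plus sparse internal edges in the small part shows that $1/2$ is the truth.

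\emph{Main obstacle.} The hardest step is this bipartite-like case: turning the degree-regularity constraint into a quantitative statement that one side has a large matching, or large linear forest, in its internal edges that survives sampling. We must also handle intermediate configurations where both sides have only about $\sqrt n$ usable internal edges. Here I would first try a matching argument: a maximal internal matching on each side, plus Vizing-type degree counting from $(n+1)$-regularity. Combined with a local limit estimate for $a-b$ compared against the binomially thinned matching sizes, this should give the bound $1/2-o(1)$.
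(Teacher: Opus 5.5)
The paper does not prove this statement; it is quoted from Dragani\'c, Keevash and M\"uyesser. So I can only judge your sketch on its own terms, and it has a genuine gap. The bipartite-like case is exactly where the constant $1/2$ comes from, and you leave it as an expectation. The expectation you state is false.

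Any Hamilton cycle $C$ of $G[S]$ satisfies $e_C(A)-e_C(B)=a-b$. Write $\mathrm{cap}_A$ for the maximum number of edges of a linear forest in $G[S\cap A]$, and similarly $\mathrm{cap}_B$. Then Hamiltonicity forces $-\mathrm{cap}_B\le a-b\le \mathrm{cap}_A$. You expect one side to have capacity $\omega(\sqrt n)$, and then conclude from the sign of $a-b$. Here is a counterexample.
\begin{itemize}
\item Take $n=k(k+2)$, $A=C_A\cup L_A$ and $B=C_B\cup L_B$ with $|C_A|=|C_B|=k$.
\item Let $G[A]$ and $G[B]$ each be $k$ vertex-disjoint stars $K_{1,k+1}$, centred in $C_A$ and $C_B$ respectively.
\item Join every vertex of $L_A$ to all of $B$ and every vertex of $L_B$ to all of $A$, with no $C_A$--$C_B$ edges.
\end{itemize}
Centres have degree $(k+1)+k(k+1)=n+1$ and leaves have degree $1+n$. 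So $G$ is $(n+1)$-regular and within $O(n)$ edges of $K_{n,n}$. Yet every linear forest in $G[S\cap A]$ has at most $2|C_A\cap S|=(1+o(1))\sqrt n$ edges, and likewise for $B$. Meanwhile $a-b$ has standard deviation $\sqrt{n/2}$. Hence $\mathbb P[G[S]\text{ is Hamiltonian}]\le \mathbb P[|Z|\le\sqrt2]+o(1)\approx 0.84$ for a standard normal $Z$. Neither side absorbs $\omega(\sqrt n)$. The outcome is decided by a two-sided window whose width is comparable to the fluctuations of $a-b$, not by its sign. You flag this regime yourself as the main obstacle, but give no argument for it.

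Three things are missing.
\begin{enumerate}
\item A quantitative trade-off between $\mathrm{cap}_A$ and $\mathrm{cap}_B$ at scale $\sqrt n$, extracted from regularity. For example, when $|A|=|B|=n$, a vertex of $A$ with internal degree $D$ has exactly $D-1$ non-neighbours in $B$, each of internal degree at least $2$. So large stars on one side force many branch vertices on the other, giving roughly $\mathrm{cap}_A\cdot \mathrm{cap}_B\gtrsim n$ in star-like configurations.
\item A Gaussian estimate showing that this trade-off yields $\mathbb P[-\mathrm{cap}_B\le a-b\le \mathrm{cap}_A]\ge 1/2-o(1)$. The constant genuinely matters: if both capacities were $0.3\sqrt n$, the window would have probability about $1/3$.
\item Sufficiency of the capacity condition for a Hamilton cycle.
\end{enumerate}
``A maximal matching plus Vizing-type counting should give $1/2$'' supplies none of these.

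Two related points also need work.
\begin{itemize}
\item $a-b$ is centred at $(|A|-|B|)/2$, not at $0$, and a near-bipartite $G$ may have $\bigl||A|-|B|\bigr|\gg\sqrt n$. You must show the larger side absorbs this deterministic excess. Regularity gives $e(A)-e(B)=(n+1)(|A|-|B|)/2$ and internal minimum degree at least $(|A|-|B|)/2+1$ on the larger side, which is the right starting point.
\item There may be far more than $\sqrt n$ atypical vertices. Your cleaning step must therefore be accounted for through the exact identity $e_C(A)-e_C(B)=a-b$, rather than dismissed as ``absorbed by short paths''.
\end{itemize}

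Two smaller slips.
\begin{itemize}
\item In the tight example, the $2$-factor goes into the part of size $n+1$ of $K_{n+1,n-1}$, whose vertices have degree $n-1$, not into the smaller part.
\item Robust expansion of $G[S]$ does not follow from concentration plus a union bound over fixed sets, because the sets $X\subseteq S$ to be tested depend on $S$. You need a reduced-graph argument or an existing inheritance lemma.
\end{itemize}
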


The bound $1/2$ is tight, as shown by the example of the complete bipartite graph $K_{n-1,n+1}$ with a 2-factor added to the larger side. 
Writing $\operatorname{Cyc}(G)$ for the number of cyclic subsets of $G$, Liu, Niu, Wang and Yan \cite{LiuNiuWangYan2026} extended this line of research to regular graphs below the Dirac threshold.
They proved that, for every $\varepsilon,\xi>0$ and all sufficiently large $n$, if $G$ is an $n$-vertex $d$-regular graph with $\varepsilon n\le d<n/2$ and $q:=\lfloor n/(d+1)\rfloor$, then $\operatorname{Cyc}(G)\ge (q-\xi)2^{n/q}$.
They also determined the optimal exponential rate at the Dirac boundary, proving that every $n$-vertex $n/2$-regular graph satisfies $\operatorname{Cyc}(G)\ge 2^{(1-o(1))n}$.
Very recently, Hunter, Liu, Milojevi\'{c} and Sudakov \cite{MR5041387} investigated a tournament analogue under a minimum semi-degree condition.
Dragani\'{c}, Keevash and M\"{u}yesser \cite{MR4935989} conjectured that for any $r\ge2$, there is some constant $c>0$ so that if $G$ is an $((r-1)n+1)$-regular graph on $rn$ vertices, then at least $c2^{rn}$ subsets of $V(G)$ induce a $K_r$-factor.
This conjecture was resolved by Sun, Wei and Yang \cite{Yang2025fator} for sufficiently large $n$.
\begin{theorem}[Sun, Wei and Yang \cite{Yang2025fator}]\label{thm:swy}
    For any $r\ge2$, there is a constant $c>0$ such that the following holds for sufficiently large $n$.
    Let $G$ be an $((r-1)n+1)$-regular graph on $rn$ vertices.
    Then at least $c2^{rn}$ subsets of $V(G)$ induce a $K_r$-factor.
\end{theorem}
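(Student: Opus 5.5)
A uniformly random subset $S\subseteq V(G)$ is exactly $G[1/2]$. I therefore plan to show that $\mathbb{P}[G[1/2]\text{ has a }K_r\text{-factor}]\ge c$ for some $c=c(r)>0$, which gives at least $c2^{rn}$ good subsets. The approach has three stages: concentration of degrees in the random subset, a structural dichotomy for graphs just below the Hajnal--Szemer\'edi threshold, and a divisibility (lattice) count for the random subset.

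\emph{Step 1: concentration.} Set $N=|S|$, so $N=rn/2\pm O(\sqrt{n\log n})$. By Chernoff bounds and a union bound over the $rn$ vertices, with probability $1-o(1)$ every vertex $v$ satisfies
\[
\deg_S(v)=\tfrac12\deg_G(v)\pm O(\sqrt{n\log n})=(1-1/r)N\pm O(\sqrt{n\log n}).
\]
So $G[S]$ has minimum degree $(1-1/r-o(1))N$. This alone does not yield a $K_r$-factor, since Hajnal--Szemer\'edi needs the exact bound and a slight deficiency can block a factor.

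\emph{Step 2: the dichotomy.} I will invoke the known stability/lattice framework for $K_r$-factors near the threshold (as in Han--Treglown, or the Keevash--Mycroft geometric theory). It says that a graph $\Gamma$ on $N\in r\mathbb{N}$ vertices with $\delta(\Gamma)\ge(1-1/r-\varepsilon)N$ has a $K_r$-factor unless it is close to an extremal configuration. Such a configuration is a partition $V_1\cup\dots\cup V_r$ into near-equal parts, nearly complete $r$-partite, with a divisibility obstruction: the part sizes modulo something, or the parity of edges inside parts, are incompatible with tiling. The main step is to show that $G$ itself is either non-extremal, in which case $G[S]$ is also non-extremal whp by concentration and we only need $r\mid |S|$ (probability $\approx 1/r$), or $G$ is $\varepsilon$-close to such a partition. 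In the latter case regularity forces the structure on $G$: the $+1$ in the degree means each $V_i$ contains internal edges, and near-regularity pins the part sizes to $|V_i|=n\pm o(n)$.

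\emph{Step 3: the extremal case, which I expect to be the main obstacle.} Here I need an exact characterisation: $G[S]$ has a $K_r$-factor iff the vector $(|S\cap V_i|)_i$, after corrections from a bounded number of internal edges used as ``switches'', lies in a certain coset of a sublattice of $\mathbb{Z}^r$. This requires an absorbing argument within the near-complete $r$-partite structure. Next I show that the vector $(|S\cap V_i|-N/r)_i$ is approximately a Gaussian lattice walk, via a local limit theorem for independent Bernoulli sums. Hence it hits any fixed coset of a finite-index lattice with probability roughly $1/(\text{index})$, and the index is bounded by a function of $r$. The delicate point is ensuring that the obstruction is a coset of bounded index, rather than, say, requiring exact equality $|S\cap V_1|=|S\cap V_2|$, which has probability only $\Theta(n^{-1/2})$. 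Regularity of degree exactly $(r-1)n+1$ should supply enough internal edges in every part that all deviations of size $O(\sqrt n)$ are fixable. Establishing this, by showing that each vertex has $\Omega(1)$ or more neighbours in its own part and building disjoint internal edges or stars as balancing gadgets, is where I expect most of the work to lie.
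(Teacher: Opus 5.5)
The paper does not prove this statement. It is quoted from Sun, Wei and Yang, and the paper's own results (Proposition~\ref{prop:simple}, and Theorem~\ref{thm:unbalancedfactor}, which for $F=K_r$ reduces to it) need $\delta\ge(1-1/r+\gamma)N$. An $((r-1)n+1)$-regular graph on $N=rn$ vertices does not satisfy this, so your proposal has to stand on its own.

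Step~1 is correct, and the non-extremal half of Step~2 is a reasonable reduction. The extremal analysis (end of Step~2 and Step~3), however, rests on a false structural claim: neither ``the $+1$ forces internal edges in every part'' nor ``all deviations of size $O(\sqrt n)$ are fixable'' holds.

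Take the complete $r$-partite graph with parts $V_1,\dots,V_r$ of sizes $n-1,n+1,n,\dots,n$. Add a $2$-regular graph inside $V_2$ and a perfect matching inside each $V_j$, $j\ge 3$ ($n$ even). For $r=2$ this is the example $K_{n-1,n+1}$ plus a $2$-factor from the introduction. It is $((r-1)n+1)$-regular and nearly complete $r$-partite with near-equal parts, which is exactly your extremal setting, yet $V_1$ is independent. Every $K_r$ meets $V_1$ at most once, so $G[S]$ has no $K_r$-factor whenever $|S\cap V_1|>|S|/r$. Since $|S\cap V_1|-|S|/r$ has mean $-1/2$ and standard deviation $\Theta(\sqrt n)$, this happens with probability tending to $1/2$. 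It does so asymptotically independently of any congruence condition on $\mathbf{i}_{\mathcal P}(S)$.

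Hence the set of good index vectors is not a coset of a bounded-index lattice. Up to events of probability $o(1)$ it is a coset intersected with a half-space. In general it is a coset intersected with a polyhedral cone of one-sided constraints, coming from parts that have few usable internal gadgets. This space-barrier behaviour is missing from your picture. You list only divisibility obstructions, and your alternative ``bounded-index coset versus an exact equality of probability $\Theta(n^{-1/2})$'' skips precisely this case. For $r\ge 3$ the space barrier need not even come with an $r$-partite structure: an independent set of size $n-1$ completely joined to any $((r-2)n+2)$-regular graph on the other $(r-1)n+1$ vertices is also $((r-1)n+1)$-regular.

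What the extremal case actually requires is two things. First, (a) an absorbing lemma showing that $G[S]$ has a $K_r$-factor whenever $\mathbf{i}_{\mathcal P}(S)$ lies in the right coset \emph{and} in this cone. Second, (b) a proof that the cone has non-empty interior and its apex lies within $O(\sqrt n)$ of the mean of $\mathbf{i}_{\mathcal P}(S)$. Then the local limit theorem bounds the probability of the intersection below by some $c(r)>0$.

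Part (b) is where the $+1$ must be used, to show that not every part can be deficient. The natural starting point is that $\alpha(G)\le n-1$, and that each vertex of a part with $|V_i|\ge n$ has at least $|V_i|-n+1$ neighbours inside $V_i$. Minimum internal degree alone does not suffice, though. A star inside $V_i$, which can occur in such a regular graph, gives every vertex an internal neighbour but only one disjoint gadget. So deficiency must be measured by the number of disjoint usable gadgets at scale $\sqrt n$.

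As written, your plan sets out to prove that every vertex has $\Omega(1)$ neighbours in its own part and that the obstruction is a bounded-index coset. Both fail on the example above.
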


In fact, they proved that the constant $c$ can be taken as $\frac{1}{(40r^2)^r}$.
They also conclude that for any probability $p\in(0,1)$, $G[p]$ contains a $K_r$-factor with probability at least $(\frac{p^2}{20r^2})^r$.

\subsection{Main results}
In this paper, we study the $F$-factor problem in random induced subgraphs for general $F$.
First, we give a simple proposition by concentration of minimum degrees.
\begin{proposition}  
\label{prop:simple}
Let \(r\ge 2\) be an integer and \(\gamma,\varepsilon>0\). 
Then there exists $C,n_0\in \mathbb{N}$ such that the following holds.
Let \(F\) be an \(r\)-vertex graph 
and \(H\) be a graph on $n\ge n_0$ vertices such that \(\delta(H)\ge (1-1/\chi(F)+\gamma)n\).
    If \(pn\ge C\log n\) and $p(1-p)n\ge C$, then \(\mathbb{P}\left[H[p]\text{ contains an $F$-factor}\right]\ge {1}/{r}-\varepsilon\). 
    \qed
\end{proposition}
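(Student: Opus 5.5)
The plan is to combine a concentration estimate for degrees in $H[p]$ with the Koml\'os--S\'ark\"ozy--Szemer\'edi theorem, so that the only remaining obstruction is divisibility of $|V_p|$ by $r$. Write $N:=|V_p|\sim\mathrm{Bin}(n,p)$.

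\emph{Degrees survive.} For each $v\in V(H)$, the number of neighbours of $v$ in $V_p$ is distributed as $\mathrm{Bin}(\deg_H(v),p)$, with mean at least $(1-1/\chi(F)+\gamma)pn$. By Chernoff's inequality, the probability that this count falls below $(1-1/\chi(F)+\gamma/2)pn$ is at most $\exp(-c\gamma^2 pn)$. Likewise $N\le (1+\gamma/8)pn$ with probability at least $1-\exp(-c\gamma^2pn)$. Since $pn\ge C\log n$ with $C$ large in terms of $\gamma$ and $\varepsilon$, a union bound over the $n$ vertices shows that, with probability $1-o(1)\ge 1-\varepsilon/2$, every vertex of $H[p]$ has degree at least $(1-1/\chi(F)+\gamma/4)N$. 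In particular $N\ge pn\ge C\log n$ is large, so for $N$ large this is at least $(1-1/\chi(F))N+C(F)$. Hence, whenever $r\mid N$, the Koml\'os--S\'ark\"ozy--Szemer\'edi theorem gives an $F$-factor in $H[p]$.

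\emph{Divisibility.} It remains to show $\mathbb{P}[r\mid N]\ge 1/r-\varepsilon/2$. This is where the hypothesis $p(1-p)n\ge C$ enters, and it is the main obstacle, though a standard one. I would use the Fourier identity
\[
\mathbb{P}[r\mid N]=\frac1r\sum_{j=0}^{r-1}\bigl(1-p+p\,\omega^j\bigr)^n,\qquad \omega=e^{2\pi i/r}.
\]
For $j\neq 0$, we have $|1-p+p\omega^j|^2=1-2p(1-p)(1-\cos(2\pi j/r))\le 1-c_r\,p(1-p)$ with $c_r>0$ depending only on $r$. Each such term therefore has modulus at most $\exp(-c_r p(1-p)n/2)\le \exp(-c_rC/2)$. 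Choosing $C$ large gives $\mathbb{P}[r\mid N]\ge 1/r-\varepsilon/2$.

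\emph{Conclusion.} A union bound over the two failure events yields
\[
\mathbb{P}\bigl[H[p]\text{ contains an $F$-factor}\bigr]\ge \mathbb{P}[r\mid N]-\varepsilon/2\ge 1/r-\varepsilon.
\]
The constants are fixed as follows: $C$ is chosen depending on $r,\gamma,\varepsilon$ and on the constant $C(F)$ from the Koml\'os--S\'ark\"ozy--Szemer\'edi theorem. Since there are only finitely many $r$-vertex graphs $F$, we may take the maximum of $C(F)$ over all of them. Finally, $n_0$ is chosen so that $C\log n_0$ exceeds the threshold on $N$ needed to apply that theorem.
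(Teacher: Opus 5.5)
Your proof is correct and follows the same route as the paper's sketch. Chernoff plus a union bound (using $pn\ge C\log n$) shows that $H[p]$ inherits the minimum degree with slack. A factor theorem then reduces everything to $r\mid |V_p|$; the paper cites Alon--Yuster, which already suffices given the $\gamma$ slack, so Koml\'os--S\'ark\"ozy--Szemer\'edi is more than you need. Your character-sum identity is a clean, rigorous version of the divisibility step that the paper only attributes to ``concentration'', and it uses exactly the hypothesis $p(1-p)n\ge C$.

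One trivial slip: $N\ge pn$ is not guaranteed. However, $N\ge(1-\gamma/8)pn$ holds with high probability by the same Chernoff bound, and that is all your argument uses.
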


In fact, by Chernoff's inequality, $H[p]$ ``inherits'' the minimum degree condition of $H$ with probability $1-o(1)$. Thus, by the Alon--Yuster Theorem \cite{alonyuster}, $H[p]$ has an $F$-factor if and only if $|V(H[p])|$ is divisible by $r$, which happens with probability $1/r-o(1)$, again by concentration.
Combining these two events give the proposition.

Our main result explores the situation when the minimum degree is relaxed significantly.

\begin{theorem}
\label{thm:unbalancedfactor}
Let \(r\ge k\ge 2\) be integers and \(\gamma,\varepsilon>0\).
Then there exist $n_0,C'\in \mathbb{N}$ such that the following holds.
Let \(F\) be an \(r\)-vertex \(k\)-chromatic graph and \(H\) be a graph on $n\ge n_0$ vertices such that \(\delta(H)\ge (1-1/\chi_{cr}(F)+\gamma)n\).
    Let \(h := 2^{r^{k-1}}r-2\).
    If \(p\ge C'(\log{n}/n)^{1/h}\) and \(p(1-p)\ge C'n^{-1/2}\), 
    then there exists $q\le (2r-1)^r$ such that
    \(\mathbb{P}\left[H[p]\text{ contains an $F$-factor}\right]\ge {1}/{(rq)}-\varepsilon\).
\end{theorem}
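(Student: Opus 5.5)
The plan is to combine the lattice-based structural theory of Han and Treglown \cite{han2020complexity} with concentration for the random vertex set $V_p$. The argument has three stages: transfer structure from $H$ to $H[p]$, reduce the $F$-factor question to a lattice condition, and count lattice points.

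\textbf{Stage 1: structure of $H$.} Since $\delta(H)\ge(1-1/\chi_{cr}(F)+\gamma)n$, the Han--Treglown framework applies. It yields a partition $\mathcal P=\{V_1,\dots,V_d\}$ of $V(H)$ with $d\le r$ parts, obtained by merging $F$-reachable vertices. It also yields the lattice $L\subseteq\mathbb Z^d$ generated by the index vectors $\mathbf i_{\mathcal P}(S)$ of copies $S$ of $F$ that are robust (contained in $\Omega(n^{r})$ copies). Write $L_{\max}=\{\mathbf v\in\mathbb Z^d:\sum v_i\equiv 0 \pmod r\}$ and set $q:=|L_{\max}/L|$. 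The coset group $G=L_{\max}/L$ is generated by differences of vectors with coordinate sum $r$ and entries at most $r$. I expect this to give $q\le(2r-1)^r$, by bounding the generators' coordinates in $(-r,r)$ and using $d\le r$. The criterion from Section~\ref{sec:generalfra} states that, for any vertex set $U$ on which the partition stays ``good'' (reachability and minimum degree inherited), $H[U]$ has an $F$-factor iff $\mathbf i_{\mathcal P}(U)\in L$. In fact a slightly weaker condition suffices: it is enough that some $F$-packing of constant size moves $\mathbf i_{\mathcal P}(U)$ into $L$.

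\textbf{Stage 2: the good event for $H[p]$.} By Chernoff bounds and a union bound, with probability $1-o(1)$ three things hold.
\begin{itemize}
\item Every vertex keeps degree $\ge(1-1/\chi_{cr}(F)+\gamma/2)|V_p|$ in $H[p]$.
\item Each part satisfies $|V_i\cap V_p|=(1\pm o(1))p|V_i|$.
\item Reachability is inherited: two vertices reachable in $H$ stay reachable in $H[p]$, and the robust copies realizing each generator of $L$ persist.
\end{itemize}
Reachability in $H$ is witnessed by $\Omega(n^{h'})$ reachable sets of bounded size, with $h'\le h$ coming from iterated merging; this is where $h=2^{r^{k-1}}r-2$ arises. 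We need $p^{h}n\gg\log n$ so that Janson's inequality keeps a witness for every pair with failure probability $n^{-3}$. This explains the hypothesis $p\ge C'(\log n/n)^{1/h}$. On this good event, the lattice of $H[p]$ with respect to $\mathcal P|_{V_p}$ contains $L$. Therefore $H[p]$ has an $F$-factor whenever $\mathbf i_{\mathcal P}(V_p)\in L$.

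\textbf{Stage 3: lattice point counting.} It remains to show $\mathbb P[\mathbf i_{\mathcal P}(V_p)\in L]\ge 1/(rq)-\varepsilon/2$. The vector $X=\mathbf i_{\mathcal P}(V_p)$ is a sum of independent binomials $\mathrm{Bin}(|V_i|,p)$ with variances $p(1-p)|V_i|$. A part $V_i$ that is too small to matter could be absorbed or handled separately; otherwise the variance is $\ge p(1-p)\Omega(n)\ge C'\sqrt n$, so it tends to infinity. By a local limit theorem, $X$ is asymptotically equidistributed modulo any fixed full-rank sublattice of $\mathbb Z^d$. The index of $L$ in $\mathbb Z^d$ is $rq$, since $|\mathbb Z^d/L_{\max}|=r$. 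Hence $\mathbb P[X\in L]\to 1/(rq)$. Alternatively one can use characters of the finite group $\mathbb Z^d/L$: every nontrivial character $\chi$ has $|\mathbb E\chi(X)|\le\prod_i|1-p+p\zeta_i|^{|V_i|}$ with some $\zeta_i\neq1$ a root of unity of order at most $rq$. This product is $\exp(-\Omega(p(1-p)n))=o(1)$, which is where $p(1-p)\ge C'n^{-1/2}$ (more than enough) is used.

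\textbf{Main obstacle.} I expect Stage 2 to be the hardest step. Guaranteeing that the reachability partition and the lattice $L$ survive the vertex sampling at density $p$ as small as $(\log n/n)^{1/h}$ requires tracking the bounded-size reachability witnesses through the merging procedure, and checking that small or degenerate parts do not break the criterion.
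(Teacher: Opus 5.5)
Your proposal is correct and follows the paper's architecture. It uses a Han--Treglown good partition with $|Q|\le(2r-1)^r$. It then shows that minimum degree, closedness and robust index vectors are inherited, so that the lattice of $H[p]$ contains $L$. Finally it applies Theorem~\ref{thm:Han_structural_theorem} to $H[p]$, with the empty tiling as a $0$-solution whenever $\mathbf i_{\mathcal P}(V_p)\in L$.

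The genuine difference is in the probabilistic tools.
\begin{itemize}
\item \emph{Equidistribution.} For Lemma~\ref{lem:lattice_prob} the paper cuts the window $pn_i\pm Cn^{1/2}$ into $R$-boxes. It shows, via ratios of binomial probabilities, that $\mathbf i_{\mathcal P}(V_p)$ is nearly uniform on each box; this is where $p(1-p)\ge C'n^{-1/2}$ is used. It then counts coset points per box with Gauss's Lemma~\ref{lem:lattice_points}. Your character sum over $\mathbb Z^d/L$ (a group of order $rq$) is shorter. It gives error at most $rq\,e^{-\Omega(p(1-p)n)}$ uniformly over cosets, and needs only $p(1-p)n\ge C$ with all parts of linear size.
\item \emph{Inheritance.} Janson's lower tail keeps $(1-o(1))$ of the expected $\Theta((pn)^{tr-1})$ reachable $(tr-1)$-sets for each pair, which is what closedness in $H[p]$ requires (not merely one witness). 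It does the same for robust copies, with failure probability $e^{-\Omega(pn)}$. So your Stage 2 needs only $pn\ge C\log n$.
\end{itemize}

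Consequently the exponent $h=2(tr-1)$, with $t=2^{r^{k-1}-1}$, is not forced by the reachability witnesses, as you suggest. It comes from the paper's bounded-differences Lemma~\ref{lem:application_McDiarmid}, whose bound $e^{-\xi^2p^{2s}n}$ with $s=tr-1$ requires $p^{2s}n\gg\log n$. In exchange that lemma gives two-sided concentration, used for the equality in (V4), which the lower bound does not need. Your route, carried out, would therefore prove the theorem in the range of $p$ of Proposition~\ref{prop:simple}.

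Two small repairs are needed.
\begin{itemize}
\item Lemma~\ref{lem:factor} covers only unbalanced $F$. For balanced $F$ you should, as the paper does, take $q=1$ and invoke Proposition~\ref{prop:simple}.
\item The bound $q\le(2r-1)^r$ must be imported from \cite[Proposition 9.3]{han2020complexity}. Generators with bounded coordinates do not bound the index until one knows that $L$ has full rank, and that is the nontrivial input.
\end{itemize}
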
 

A natural consequence of Theorem \ref{thm:unbalancedfactor}  (with $p=1/2$) is that at least $(1/(rq)-o(1))2^{n}$ subsets of $H$ induce $F$-factors, regardless of whether $H$ itself admits an $F$-factor.
Moreover, Theorem \ref{thm:unbalancedfactor} is (asymptotically) best possible in multiple senses. 
\begin{itemize}
    \item First, the minimum degree assumption cannot be weakened substantially.
    Indeed, if we relax the minimum degree condition further significantly then the host graph $H$ might be contained in the so-called \emph{space barrier}, resulting that the probability of $H[p]$ containing an $F$-factor tends to 0 -- see the construction below.
    
\medskip
\noindent
\emph{Construction 1.}
Let $\gamma>0$, $F$ be an $r$-vertex graph and $n\in \mathbb{N}$ be sufficiently large. 
Let \(H_0\) be the complete \(\chi(F)\)-partite graph on \(n\) vertices with one vertex class of size \((\sigma(F)/r-\gamma)n\), denoted by \(A\), and all other vertex classes as equal in size as possible. 
It is easy to see that $\delta(H_0) = n-\frac{n-|A|}{\chi(F)-1}= (1-\frac{1}{\chi_{cr}(F)}-\frac{\gamma}{\chi(F)-1})n$ and for fixed $p\in (0,1)$ with probability $1-o_n(1)$, a $p$-random subset $V_p$ of $V(H_0)$ satisfies that $|V_p\cap A|\le(\sigma(F)/r-\gamma/2)np$ and $|V_p|\ge (1-\gamma^2)np > r|V_p\cap A|/\sigma(F)$.
Therefore, as every copy of \(F\) uses at least \(\sigma(F)\) vertices from \(A\), any maximum \(F\)-packing in $H_0[p]=H_0[V_p]$ has size $ |V_p\cap A|/\sigma(F)< |V_p|/r$, and thus is not perfect.
Therefore, \(\mathbb{P}\left[H_0[p]\text{ contains an \(F\)-factor}\right]=o_n(1)\).

\item Second, the quantity $q$ is the order of certain coset group defined via the structural information of $H$, and for given $q$, there are infinitely many graphs $H$ with $q=q(H,F)$ and \(\mathbb{P}\left[H[p]\text{ contains an $F$-factor}\right]= {1}/{rq}-o_n(1)\). 
See Example \ref{ex:infinite} in Section \ref{sec:notation}.
The winning probability is asymptotically best possible for infinitely many $H$, and we shall explain this in Section \ref{sec:win}.

\item At last, the bound on $p$ relies on our reachability--absorption approach and is probably not optimal. Nevertheless, we include a best possible bound under our approach in case it is useful elsewhere.
\end{itemize}

Indeed, we believe that when one raises the minimum degree of $H$ from $(1-1/\chi_{cr}(F)+\gamma)n$ to $(1-1/\chi(F)+\gamma)n$, the quantity $q$ should monotonically decrease to 1, leaving \(\mathbb{P}\left[H[p]\text{ contains an $F$-factor}\right]\allowbreak= {1}/{r}-o_n(1)\).


We also give a similar result for perfect matchings in hypergraphs.
Given a $k$-graph $H=(V,E)$, a fractional matching in $H$ is a function
$\omega:E\to[0,1]$ such that for each $v\in V$ we have that
$\sum_{e\ni v}\omega(e)\le 1$. Then $\sum_{e\in E}\omega(e)$ is the size of
$\omega$. If the size of $\omega$ in $H$ is $n/k$, then we say that $\omega$
is a \emph{perfect fractional matching}. Given $k,\ell\in\mathbb{N}$ such that
$\ell\le k-1$, define $c_{k,\ell}^{*}$ to be the smallest number $c$ such that
every $k$-graph $H$ on $n$ vertices with
$\delta_{\ell}(H)\ge(c+o_n(1))\binom{n-\ell}{k-\ell}$ contains a perfect
fractional matching. 
Alon et al.~\cite{alon2012large} conjectured that for all $1\le \ell <k$, $c_{k,\ell}^*=1-(1-1/k)^{k-\ell}$ and so far it is verified for $\ell \ge 0.4k$ by Frankl and Kupavskii~\cite{frankl2022erdHos}.
Very recently, Fu et al.~\cite{fu2026sharp} announced a proof of a conjecture of Feige, which, together with a work of Ferber and Jain~\cite{ferber2019uniformity}, implies the conjecture of Alon et al., that is, for all $1\le \ell <k$, $c_{k,\ell}^*=1-(1-1/k)^{k-\ell}$.

\begin{theorem}\label{thm:ell_degree}
Let \(k\ge 3\), \(\ell\in[k-1]\) be integers, and let \(\gamma,\varepsilon>0\). 
    Then there exist \(n_0,C'\in\mathbb N\) such that the following holds. 
    Let \(H\) be a \(k\)-graph on $n\ge n_0$ vertices satisfying \(\delta_{\ell}(H)\ge(c^*_{k,\ell}+\gamma)\binom{n-\ell}{k-\ell}\). 
    There exists an integer $q\le (2k+1)^k$ such that the following holds.
    Let \(h:=\max\{2k,2^{\lfloor1/c^*_{k,\ell}\rfloor}k-2\}\). 
    If \(p\ge C'(\log{n}/n)^{1/h}\) and \(p(1-p)\ge C'n^{-1/2}\), then \(\mathbb{P}\left[H[p]\text{ contains a perfect matching}\right]\ge\frac{1}{kq}-\varepsilon\). 
\end{theorem}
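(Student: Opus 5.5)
We follow the same strategy as for Theorem~\ref{thm:unbalancedfactor}, working in the Han--Treglown lattice framework for perfect matchings under the $\ell$-degree condition $\delta_\ell(H)\ge(c^*_{k,\ell}+\gamma)\binom{n-\ell}{k-\ell}$. The structural input is that such $H$ admits a partition $\mathcal P=\{V_1,\dots,V_d\}$ with $d\le \lfloor 1/c^*_{k,\ell}\rfloor$. Each part $V_i$ is ``closed'': any two vertices of $V_i$ are reachable from each other by many small sets. Let $L=L_{\mathcal P}(H)\subseteq\mathbb Z^d$ be the lattice generated by the robust edge vectors. Every sufficiently large subhypergraph $H'$ whose index vector $\mathbf i_{\mathcal P}(V(H'))$ lies in $L$, and which inherits the reachability and degree properties, has a perfect matching. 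Set $q:=|L_{\max}/L|$, where $L_{\max}=\{\mathbf x\in\mathbb Z^d:\sum x_i\equiv 0 \pmod k\}$, so that $q$ is the order of the coset group. Since $L$ contains $k\mathbf u_i$ up to bounded error, $q\le (2k+1)^k$.

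The proof proceeds in three steps.

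\emph{Step 1 (inheritance).} With probability $1-o(1)$, $H[p]$ still satisfies $\delta_\ell(H[p])\ge (c^*_{k,\ell}+\gamma/2)\binom{|V_p|-\ell}{k-\ell}$. This follows from Chernoff together with a union bound over the $\ell$-sets, using $pn\ge C\log n$. The restricted partition $\mathcal P\cap V_p$ keeps every part closed in $H[p]$, and the robust edge vectors survive, so $L_{\mathcal P\cap V_p}(H[p])=L$.

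For closedness we argue as follows. For $u,v\in V_i$ there are $\Omega(n^{t-1})$ reachable $(u,v)$-sets of size $t$, with $t\le h$ (coming from $2^{d}k-2$ iterations of the merging procedure, or $2k$ in the base case). Each such set survives with probability $p^{t-1}$. Janson's inequality, combined with a union bound over pairs, then requires $p^{h}n\ge C\log n$, which is exactly the hypothesis $p\ge C'(\log n/n)^{1/h}$.

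In the same step we verify that $H[p]$ inherits an absorbing structure. Every $k$-set with index vector in $L$ should still have many absorbers, so that the Han--Treglown criterion applies to $H[p]$: $H[p]$ has a perfect matching if and only if $\mathbf i_{\mathcal P}(V_p)\in L$, with the degenerate ``almost-barrier'' cases excluded by the $\gamma$ slack and by the perfect fractional matching threshold $c^*_{k,\ell}$.

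\emph{Step 2 (lattice point counting).} It remains to show $\mathbb P[\mathbf i_{\mathcal P}(V_p)\in L]\ge 1/(kq)-\varepsilon$. The vector $\mathbf X=(|V_p\cap V_i|)_i$ is a sum of independent binomials, each with variance $\Theta(p(1-p)n)\ge C'^2 n^{1/2}$, and $L$ has index $kq$ in $\mathbb Z^d$. A local limit theorem for each coordinate gives that the probability mass at neighbouring lattice points differs by $O(1/\sqrt{p(1-p)n})$. Partition a box of side $\omega\ll\sqrt{p(1-p)n}$ around the mean into translates of a fundamental domain of $L$. Within each translate, each of the $kq$ cosets then receives essentially equal mass, so $\mathbf X$ is equidistributed mod $L$ up to $o(1)$, with each coset probability close to $1/(kq)$. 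Intersecting with the $1-o(1)$ event from Step 1 yields the theorem.

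The main obstacle is Step 1: showing that reachability, and hence the lattice $L$ and absorbers, transfers to $H[p]$ at the small density $p\sim(\log n/n)^{1/h}$. The difficulty is the correlations among the reachable sets. We would first try Janson's inequality on the family of $t$-sets witnessing reachability of a fixed pair, bounding $\Delta$ by counting pairs of sets sharing vertices via the $\Omega(n^{t-1})$ lower and $O(n^{t-1})$ upper counts.
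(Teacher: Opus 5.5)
\textbf{There is a genuine gap in your structural input.} You assume that $V(H)$ itself is partitioned into closed parts $V_1,\dots,V_d$. Unlike the codegree case, the $\ell$-degree condition does not give every vertex linearly many reachable partners. The partition tools available here (Lemma~\ref{lem:s+1} followed by Lemma~\ref{lem:good_partition}) only give closed parts covering all vertices except an exceptional set $V_0$ with $|V_0|\le s\delta' n$, where $s=\lfloor 1/c^*_{k,\ell}\rfloor$. Only when $s=1$ is $V(H)$ itself closed.

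Once $V_0\neq\emptyset$, your Step~1 conclusion fails, for three reasons:
\begin{itemize}
\item Theorem~\ref{thm:Han_structural_theorem} needs a good partition of the \emph{whole} vertex set.
\item The closedness and robust vectors you transfer say nothing about $V_0'=V_0\cap V_p$.
\item $\mathbf i_{\mathcal P}(V_p)\in L$ does not even force $k\mid |V_p|$, since $\mathbf i_{\mathcal P}$ ignores $V_0$.
\end{itemize}

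The missing idea is the paper's treatment of $V_0$:
\begin{itemize}
\item Each $v\in V_0$ lies in at least $\delta_1(H)=\Omega(n^{k-1})$ edges. By pigeonhole there is a fixed vector $\mathbf w_v$ such that $\Omega(n^{k-1})$ of these edges have their other $k-1$ vertices outside $V_0$ with index vector $\mathbf w_v$.
\item In $H[p]$ one splits $V_p\setminus V_0$ randomly into $U^+$ and $U^-$. One greedily covers $V_0'$ by disjoint such edges inside $U^+$, which is possible since $|V_0'|\le 2\rho m$.
\item The half $U^-$ guarantees that the leftover hypergraph still has closed parts and the robust vectors of $H$, so Theorem~\ref{thm:Han_structural_theorem} applies to it.
\end{itemize}
Accordingly, the good event is not $\mathbf i_{\mathcal P}(V_p)\in L$ but $\mathbf i_{\mathcal P}(V_p)\in\sum_{v\in V_0'}\mathbf w_v+L$, which is a \emph{random} coset. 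Its probability is still $\frac{1}{kq}\pm\varepsilon$: conditionally on $V_p\cap V_0=Z$ the coset is fixed, and $\mathbf i_{\mathcal P}(V_p)$ is independent of this event. So you need equidistribution over \emph{every} coset of $L$, which your Step~2 does give.

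\textbf{Smaller points.}
\begin{itemize}
\item To apply Theorem~\ref{thm:Han_structural_theorem} to the leftover, you need the degree to exceed the almost-perfect-matching threshold. The relevant input is $\delta(k,\ell,2k-\ell-1)\le c^*_{k,\ell}$ (Lemma~\ref{lem:almostperfectfrac}), not merely the fractional threshold.
\item The bound $q\le(2k+1)^k$ is not justified by ``$L$ contains $k\mathbf u_i$ up to bounded error''. It comes from Proposition~\ref{lemm:bound_ell}, which gives $(2k+1)^d$, together with $d\le s\le k$ (as $c^*_{k,\ell}\ge 1/k$).
\item You only need $L$ to be contained in the robust lattice of the leftover, and only the ``if'' direction of the criterion. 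Equality of lattices would require a gap in robustness thresholds, as in the paper's choice of $\mu^*$ with $I^{\mu^*}=I^{\mu^*/4}$.
\item Reachable sets have size $2^{s-1}k-1$, not $t$. Janson's inequality would need only $pn\ge C\log n$; the exponent $h$ comes from the paper's use of McDiarmid's inequality. So it is not ``exactly'' what Janson requires, but this is harmless.
\end{itemize}

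Your Step~2 is a legitimate, more elementary substitute for the Gauss lattice-point count, with one correction. The region around the mean must be much \emph{wider} than the standard deviation. It should be tiled by cells of bounded side, for example side $kq$: since $kq\,\mathbb Z^d\subseteq L$, each such cell contains exactly $(kq)^{d-1}$ points of each coset.
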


Similar to Theorem \ref{thm:unbalancedfactor}, both the minimum degree conditions and the winning probability in Theorem \ref{thm:ell_degree} are asymptotically best possible (see Construction 2).
Moreover, similar to the previous case, $q$ will be taken as the order of certain coset group defined via  $H$.

In the codegree setting, a stronger probability bound can be obtained by exploiting the more precise structural information available under the degree condition \(\delta_{k-1}(H)\ge (1/s+\gamma)n\).

\begin{theorem}\label{thm:perfectmatching}
Let $k\ge3$ be an integer and let $\gamma,\varepsilon>0$.
Then there exist $n_0,C'\in\mathbb{N}$ such that the following holds.
Let \(H\) be a \(k\)-graph on $n\ge n_0$ vertices such that \(\delta_{k-1}(H)\ge (1/s+\gamma)n\) for integer $s\in[2,k]$. 
Let \(h:=\max\{2k,2^{s-1}k-2\}\). If \(p\ge C'(\log{n}/n)^{1/h}\) and \(p(1-p)\ge C'n^{-1/2}\), then \(\mathbb{P}\left[H[p]\text{ contains a perfect matching}\right]\ge\frac{1}{k(s-1)}-\varepsilon\).
\end{theorem}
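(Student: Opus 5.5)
Theorem~\ref{thm:perfectmatching} concerns the codegree setting. I would prove it via the lattice-based framework of Han and Treglown, with the structural input being the known description of $k$-graphs satisfying $\delta_{k-1}(H)\ge (1/s+\gamma)n$.

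\textbf{Step 1: structure of $H$.} By the reachability arguments of Han (and Keevash--Knox--Mycroft), such an $H$ admits a partition $\mathcal{P}=\{V_1,\dots,V_d\}$ with $d\le s-1$ parts. Within each part, any two vertices are $(\beta,t)$-reachable, i.e.\ each pair has $\beta n^{tk-1}$ reachable $(tk-1)$-sets. Moreover, the robust edge lattice $L_{\mathcal{P}}\subseteq\mathbb{Z}^d$ (index vectors of edges appearing $\ge \mu n^k$ times) satisfies the following: the quotient $L_{\max}/L_{\mathcal P}$, where $L_{\max}=\{\mathbf v: \sum v_i\equiv 0 \bmod k\}$, has order $q\le s-1$. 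The key fact to extract is exactly this bound $|G_{\mathcal P}|\le d\le s-1$, coming from the codegree condition: every $(k-1)$-set extends to more than $n/s$ edges, which forces every transferral between parts except along at most $s-2$ ``directions''.

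\textbf{Step 2: transfer to $H[p]$.} With probability $1-o(1)$, $H[p]$ inherits the relevant properties:
\begin{itemize}
\item $\delta_{k-1}(H[p])\ge (1/s+\gamma/2)pn$;
\item $|V_i\cap V_p|$ is concentrated;
\item each robust edge type still occurs $\Omega(p^k n^k)$ times;
\item reachability survives in $H[p]$, with $\beta' p^{tk-1}n^{tk-1}$ reachable sets.
\end{itemize}
The last item needs Janson/Kim--Vu-type concentration for counts of absorbing structures of size up to $h=\max\{2k,2^{s-1}k-2\}$; this is what dictates $p\ge C'(\log n/n)^{1/h}$. The Han--Treglown criterion then says that $H[p]$ has a perfect matching provided the index vector $\mathbf{i}_{\mathcal P}(V_p)$ lies in $L_{\mathcal P}$. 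One also needs that the absorbing method yields this "if" direction at the $p$-scaled degree, which is standard given the inherited reachability.

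\textbf{Step 3: lattice counting.} It remains to show $\mathbb{P}[\mathbf{i}_{\mathcal P}(V_p)\in L_{\mathcal P}]\ge 1/(kq)-\varepsilon$. The vector $(|V_i\cap V_p|)_i$ is a sum of independent binomials with variance $\Theta(p(1-p)n)\to\infty$ (using $p(1-p)\ge C'n^{-1/2}$). By a local limit theorem, its distribution modulo the full-rank sublattice $L_{\mathcal P}$ (of index $kq$ in $\mathbb{Z}^d$) is asymptotically uniform over the $kq$ cosets. Hence the probability is $1/(kq)-o(1)\ge 1/(k(s-1))-\varepsilon$.

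\textbf{Main obstacle.} I expect the main obstacle to be Step 1, namely pinning down that the coset group has order at most $s-1$ rather than the cruder $(2k+1)^k$ of Theorem~\ref{thm:ell_degree}. I would try to show that each part has size $>n/s-o(n)$, giving $d\le s-1$, and that $L_{\mathcal P}$ contains every vector $\mathbf u_{j}-\mathbf u_{j'}$ whose transferral is robust. Then I would argue directly that $|L_{\max}/L_{\mathcal P}|\le d$, using that $L_{\mathcal P}$ contains some $k\mathbf u_j$-type vector obtained from the degree condition. A secondary technical point is the reachability-concentration in Step 2 at small $p$.
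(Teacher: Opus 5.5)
Your proposal is correct and follows essentially the same route as the paper: a partition into at most $s-1$ closed parts, then inheritance of the codegree, closedness and robust edge types in $H[p]$, then near-uniformity of $\mathbf i_{\mathcal P}(V_p)$ over the $kq$ cosets of $L_{\mathcal P}$ in $\mathbb Z^d$, and finally the Han--Treglown criterion. The partition is quoted from Han (Lemma~\ref{lem:perfectmatching}) with $t=2^{s-2}$, so $h=2(tk-1)=2^{s-1}k-2$ comes from McDiarmid-type concentration of the reachable $(tk-1)$-set counts, not from Janson. The coset bound $|L_{\max}/L_{\mathcal P}|\le|\mathcal P|$, which you flag as the main obstacle, is not reproved in the paper but cited from Han--Zhao (Proposition~\ref{prop:coset_group}, building on Keevash--Knox--Mycroft), so you may simply invoke it.
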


Theorem~\ref{thm:perfectmatching} implies that roughly a $1/k(s-1)$-proportion of the induced subgraphs of $H$ have a perfect matching, and the probability $1/k(s-1)-o_n(1)$ is asymptotically best possible, see the final section.
Indeed, this matches our instinct above for $F$-factors -- when the minimum codegree takes form $(1/s+\gamma)n$ and $s$ decreases from $k$ to $2$, the quantity $q=q(H)$ as in Theorem~\ref{thm:unbalancedfactor} takes value $q\le s-1$ (when $s=2$ this is the sharp minimum codegree threshold forcing perfect matching determined by R\"{o}dl, Ruci\'{n}ski and Szemer\'{e}di \cite{RSS2009perfect}).

Moreover, when the minimum codegree is significantly less than $n/k$, the following construction (space barrier) shows that the result of Theorem~\ref{thm:perfectmatching} does not hold anymore. 

\medskip
\noindent
\emph{Construction 2.}
Take $k\ge 3$, $\gamma>0$ and $n$ be sufficiently large.
Let $H_0=(V, E)$ be an $n$-vertex $k$-graph with a vertex partition $V=A\cup B$ such that $|A|=(1-\gamma)n/k$ and $E$ consists of all $k$-subsets of $V$ intersecting $A$.
It is easy to see that $\delta_{k-1}(H_0) = |A|=(1-\gamma)n/k$ and 
\(\delta_{\ell}(H_0)\ge \binom{n-\ell}{k-\ell}-\binom{|B|-\ell}{k-\ell}\ge\left(c^*_{k,\ell}-\kappa\gamma\right)\binom{n-\ell}{k-\ell}\) for some constant \(\kappa=\kappa(k,\ell)>0\). 
Moreover, for fixed $p\in (0,1)$ with probability $1-o_n(1)$, a $p$-random subset $V_p$ of $V$ satisfies that $|V_p\cap A|\le (1-\gamma/2)np/k$ and $|V_p|\ge (1-\gamma^2)np > k|V_p\cap A|$.
Therefore, as all edges of $H_0[p]=H_0[V_p]$ intersect $V_p\cap A$, any maximum matching in $H_0[p]$ has size $|V_p\cap A|< |V_p|/k$, and thus is not perfect.
Therefore, \(\mathbb{P}\left[H_0[p]\text{ contains a perfect matching}\right]=o_n(1)\). 


\medskip

The rest of the paper is organized as follows. In Section~\ref{sec:notation} we introduce notation and recall the concentration inequalities and lattice point counting tools used later. 
In Section~\ref{sec:generalfra}, we present our main structural theorem, which establishes a general framework for dealing with $F$-factor problems in random induced subgraphs.
We also give some applications of this framework.
In Section~\ref{sec:proofofmain}, we give a proof of the main structural theorem. 
Finally, in Section~\ref{sec:win} we present (infinitely many) examples showing that the winning probabilities in our main results are asymptotically best possible.


\section{Notation and Preliminaries}\label{sec:notation}
\subsection{Notation} 
Given an \(n\)-vertex \(k\)-graph $H$ and integer \(d\ge 0\), let \(\mathcal{P}=\{V_0,V_1,\dots,V_d\}\) be a partition of \(V(H)\). 
Throughout this paper, every partition has an implicit ordering of its parts.  
For any vector $\textbf{v} \in \mathbb{Z}^d$, $\textbf{v}|_i$ denotes the $i$-th coordinate of $\textbf{v}$, and define $|\textbf{v}| := \sum_{i=1}^d \textbf{v}|_i$.  
We say that $\textbf{v} \in \mathbb{Z}^d$ is an \emph{$r$-vector} if it has non-negative coordinates and satisfies $|\textbf{v}| =r$.

    \begin{definition}
    [Index vector and lattice]
        Let \(F\) be an \(r\)-vertex \(k\)-graph. 
        The \emph{index vector} \(\mathbf{i}_{\mathcal{P}}(S)\in \mathbb{Z}^d \) of a subset \(S\subseteq V\) with respect to \(\mathcal{P}\) is the vector whose coordinates are the size of intersection of \(S\) with each part of \(\mathcal{P}\) except \(V_0\), namely, \(\mathbf{i}_\mathcal{P}(S)|_i = |S \cap V_i|\) for \(i \in [d]\). 
        Then for any \(\mu >0\),
        \begin{enumerate}[label=(\arabic*)]
            \item \(I_{\mathcal{P},F}^{\mu}(H)\) denotes all r-vectors  \(\mathbf{i}\in \mathbb{Z}^d\) such that \(H\) contains at least \(\mu n^r\) copies of \(F\) with index vector \(\mathbf{i}\); such vectors are said to be \emph{\(\mu\)-robust}.
            \item \(L_{\mathcal{P},F}^{\mu}(H)\) denotes the lattice (that is, the additive subgroup) in \(\mathbb{Z}^d\) generated by \(I_{\mathcal{P},F}^{\mu}(H)\).
        \end{enumerate}
    \end{definition}
In the case of perfect matchings (i.e. when $F$ is an edge), we write $I_{\mathcal{P}}^\mu(H)$ and $L_{\mathcal{P}}^{\mu}(H)$ for $I_{\mathcal{P},F}^\mu(H)$ and $L_{\mathcal{P},F}^{\mu}(H)$ respectively.

Let $q \in \mathbb{N}$. 
A (possibly empty) $F$-tiling $M$ in $H$ of size at most $q$ is called a \emph{$q$-solution} for $(\mathcal{P}, L_{\mathcal{P},F}^{\mu}(H))$ if the index vector of the uncovered vertices satisfies $\textbf{i}_{\mathcal{P}}(V(H) \setminus V(M)) \in L_{\mathcal{P},F}^{\mu}(H)$. 
We say that $(\mathcal{P}, L_{\mathcal{P},F}^{\mu}(H))$ is \emph{$q$-soluble} if such a $q$-solution exists.

For a partition $\mathcal{P}$ with $d$ parts, 
let $L_{\max}^d$ be the lattice generated by all $r$-vectors, i.e.,
\[
L_{\max}^d := \{ \textbf{v} \in \mathbb{Z}^d : r \mid |\textbf{v}| \}.
\]
Now suppose $L\subseteq L_{\max}^{d}$ is a lattice in $\mathbb{Z}^d$. 
The \emph{coset group} of $(\mathcal{P}, L)$ is defined as $Q = Q(\mathcal{P}, L) := L_{\max}^d/L$. 

{In our proofs, we shall define $q=|Q(\mathcal{P}, L_{\mathcal{P},F}^{\mu}(H))|$ as the order of the coset group for a chosen partition $\mathcal P$ and real $\mu\in (0,1)$. See the following example.}

\begin{example}\label{ex:infinite}
Fix $t\in \mathbb{N}$ and let $F=K_{2,4}$ and $H=K_{n/2-t, n/2+t}$ with a vertex partition $\mathcal P$ of $H$ the natural bipartition of its vertex set. Note that $I_{\mathcal P, F}^{\mu}(H)=\{(2,4),(4,2)\}$ and $L_{\mathcal P, F}^{\mu}(H)$ consists of all pairs $(a,b)\in (2\mathbb{Z})^2$ with $a+b\in 6\mathbb{Z}$.
Moreover, $L_{\max}^2$ consists of all pairs $(a,b)\in \mathbb{Z}^2$ with $a+b\in 6\mathbb{Z}$, and it is easy to see that $|Q(\mathcal{P}, L_{\mathcal{P},F}^{\mu}(H))|=2$.
Changing the value of $t\in [-n/6, n/6]$ gives infinitely many choices of $H$ for $q=2$ and $\delta(H)\ge n/3=(1-1/\chi_{cr}(F))n$.

\end{example}

Let $F$ be an $r$-vertex $k$-graph and let $H$ be an $n$-vertex $k$-graph. 
We say that two vertices $u, v \in V(H)$ are \emph{$(F, \beta, i)$-reachable in $H$} if there exist at least $\beta n^{i r-1}$ sets $S \subseteq V(H)$ of size $i r-1$ such that both $H[S \cup \{u\}]$ and $H[S \cup \{v\}]$ contain an $F$-factor. 
Such a set $S$ is called a \emph{reachable $(ir-1)$-set for $u$ and $v$}.

A vertex set $U \subseteq V(H)$ is \emph{$(F, \beta, i)$-closed in $H$} if every pair of vertices $u, v \in U$ is $(F, \beta, i)$-reachable in $H$. 
For any vertex $v \in V(H)$, define $\tilde{N}_{F,\beta, i}(v,H)$ to be the set of vertices in $V(H)$ that are $(F,\beta,i)$-reachable to $v$ in $H$.
Let $\beta, c > 0$ and $t \in \mathbb{N}$. 
A partition $\mathcal{P} = \{V_1, \dots, V_d\}$ of $V(H)$ is called \emph{$(F,\beta,t,c)$-good} if it satisfies the following properties:
\begin{itemize}
    \item $V_i$ is $(F,\beta,t)$-closed in $H$ for every $i \in [d]$;
    \item $|V_i| \geq cn$ for every $i \in [d]$.
\end{itemize}

In the case of perfect matchings (i.e. when $F$ is an edge), we write $(\beta,i)$-reachable, $(\beta,i)$-closed, $\tilde{N}_{\beta, i}(v,H)$ and $(\beta,t,c)$-good for $(F,\beta,i)$-reachable, $(F,\beta,i)$-closed, $\tilde{N}_{F,\beta, i}(v,H)$ and $(F,\beta,t,c)$-good respectively.

We use $\ll$ to denote a hierarchy between constants.
If we write that a statement holds whenever $0 < a \ll b, c \ll d$, it means that there exist non-decreasing functions $g_1, g_2 \colon (0,1] \to (0,1]$ and $f \colon (0,1]^2 \to (0,1]$ such that the statement holds for all $a, b, c, d$ satisfying $b \le g_1(d)$, $c \le g_2(d)$, and $a \le f(b, c)$.
We will not explicitly compute these functions to avoid cluttering the presentation of the proofs.
We denote $a\in[b-c,b+c]$ by $a=b\pm c$ throughout the paper.

\subsection{Preliminaries}We first need the following concentration inequalities.
\begin{lemma}\cite[Theorem 2]{MR144363}\label{lem:Hoeffding}
    Let $X_1,\ldots,X_n$ be mutually independent random variables where each $X_i$ has Bernoulli distribution.
Consider $S_n=\sum_{i=1}^n X_i$,
then for any $t>0$,
\[
\mathbb{P}\left[|S_n-\mathbb{E}[S_n]|\ge t\right]
\le
2\exp\left(
-{2t^2}/{n}
\right).
\]
\end{lemma}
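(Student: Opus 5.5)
This is the standard Hoeffding inequality for bounded independent summands, and I will prove it by the usual exponential-moment (Chernoff) method. Write $Y_i := X_i - \mathbb{E}[X_i]$, so the $Y_i$ are independent, have mean zero, and each takes values in an interval $[a_i,a_i+1]$ of length $1$. It suffices to prove the one-sided bound $\mathbb{P}[S_n-\mathbb{E}[S_n]\ge t]\le \exp(-2t^2/n)$. The lower tail then follows by applying the same bound to the variables $1-X_i$, which are again Bernoulli, and a union bound over the two tails gives the factor $2$.

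For the one-sided bound, fix $\lambda>0$ and apply Markov's inequality to $e^{\lambda\sum_i Y_i}$. By independence this gives
\[
\mathbb{P}\Big[\sum_{i=1}^n Y_i\ge t\Big]\le e^{-\lambda t}\prod_{i=1}^n \mathbb{E}\big[e^{\lambda Y_i}\big].
\]
The key step is Hoeffding's lemma: a mean-zero random variable $Y$ supported in an interval of length $1$ satisfies $\mathbb{E}[e^{\lambda Y}]\le e^{\lambda^2/8}$. Assuming this, the product is at most $e^{n\lambda^2/8}$. Choosing $\lambda=4t/n$ to minimize the exponent gives $\exp(-\lambda t+n\lambda^2/8)=\exp(-2t^2/n)$, as required.

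The only real work, and the main obstacle, is proving Hoeffding's lemma. In the Bernoulli case it reduces to a one-variable calculus inequality. Let $X\sim\mathrm{Bernoulli}(\pi)$ and $Y=X-\pi$. Define
\[
\varphi(\lambda):=\log \mathbb{E}[e^{\lambda Y}]=-\lambda\pi+\log(1-\pi+\pi e^{\lambda}).
\]
Direct computation gives $\varphi(0)=0$ and $\varphi'(0)=0$. Moreover, $\varphi''(\lambda)=\theta(1-\theta)$ with $\theta=\pi e^\lambda/(1-\pi+\pi e^\lambda)\in[0,1]$, so $\varphi''(\lambda)\le 1/4$ for all $\lambda$. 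Taylor's theorem with remainder then yields $\varphi(\lambda)\le \lambda^2/8$. Substituting this into the display above completes the proof. Since the lemma is classical, this is all that is needed beyond citing Hoeffding's paper.
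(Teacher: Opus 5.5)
Your proof is correct and complete. The paper gives no proof of its own and simply cites Hoeffding's Theorem 2; your argument (Markov on $e^{\lambda\sum_i Y_i}$, the bound $\varphi(\lambda)\le\lambda^2/8$ from $\varphi(0)=\varphi'(0)=0$ and $\varphi''=\theta(1-\theta)\le 1/4$, then $\lambda=4t/n$) is Hoeffding's original argument specialized to two-point variables, where his convexity reduction to the two-point case is not needed.
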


\begin{lemma}\cite[Lemma 1.2]{mcdiarmid}\label{lem:McDiarmid's_inequality}
	Let $ X_1,\dots,X_n $ be independent random variables, with each $ X_i $ taking values in a finite set $\Lambda_i$. Let $f:\prod_{i=1}^n \Lambda_i\rightarrow\mathbb{R}$ be a function satisfying: for some $ L>0 $ if $\mathbf{x},\mathbf{y}\in \prod_{i=1}^n \Lambda_i$ differ by at most one coordinate then $|f(\mathbf{x})-f(\mathbf{y})|\le L$. Then, for every $ t>0 $ there holds 
    \[\mathbb{P}[|f(X_1,\dots,X_n)-\mathbb{E}[f(X_1,\dots,X_n)]|\ge t]\le 2\exp(-2t^2/(nL^2)).
    \]
\end{lemma}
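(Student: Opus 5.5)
This is McDiarmid's bounded differences inequality, which the paper cites; we would prove it by the standard Doob martingale argument combined with Hoeffding's lemma. Write $Z=f(X_1,\dots,X_n)$. For $0\le i\le n$ set $Z_i:=\mathbb{E}[Z\mid X_1,\dots,X_i]$, so that $Z_0=\mathbb{E}[Z]$ and $Z_n=Z$. Since each $\Lambda_i$ is finite, all conditional expectations are finite weighted sums and no measure-theoretic subtleties arise. By independence we can write $Z_i=g_i(X_1,\dots,X_i)$ with
\[
g_i(x_1,\dots,x_i)=\sum_{y_{i+1},\dots,y_n}\Big(\prod_{j>i}\mathbb{P}[X_j=y_j]\Big)\, f(x_1,\dots,x_i,y_{i+1},\dots,y_n).
\]
Put $D_i:=Z_i-Z_{i-1}$. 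Then $Z-\mathbb{E}[Z]=\sum_{i=1}^n D_i$ and $\mathbb{E}[D_i\mid X_1,\dots,X_{i-1}]=0$.

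The key step is to show that, conditional on $X_1,\dots,X_{i-1}$, the increment $D_i$ lies in an interval of length at most $L$. Fix $x_1,\dots,x_{i-1}$ and set
\[
a_i=\min_{x\in\Lambda_i}g_i(x_1,\dots,x_{i-1},x)-g_{i-1}(x_1,\dots,x_{i-1}),\qquad b_i=\max_{x\in\Lambda_i}g_i(x_1,\dots,x_{i-1},x)-g_{i-1}(x_1,\dots,x_{i-1}).
\]
Then $a_i\le D_i\le b_i$. For any $x,x'\in\Lambda_i$, the formula above expresses $g_i(\dots,x)-g_i(\dots,x')$ as an average, over the same product weights, of $f$-differences between inputs that differ only in coordinate $i$. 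Each such difference is at most $L$ in absolute value, so $b_i-a_i\le L$. This is the only place independence is used: it guarantees that the averaging weights on the remaining coordinates do not depend on $x_i$. This is the step needing care; the rest is mechanical.

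Next, Hoeffding's lemma (a mean-zero variable supported in an interval of length $L$ satisfies $\mathbb{E}e^{sY}\le e^{s^2L^2/8}$), applied conditionally, gives $\mathbb{E}[e^{sD_i}\mid X_1,\dots,X_{i-1}]\le e^{s^2L^2/8}$ for every $s>0$. Iterating the tower property from $i=n$ down to $i=1$ yields
\[
\mathbb{E}\big[e^{s(Z-\mathbb{E}Z)}\big]\le e^{ns^2L^2/8}.
\]
By Markov's inequality, $\mathbb{P}[Z-\mathbb{E}Z\ge t]\le \exp(-st+ns^2L^2/8)$. Choosing $s=4t/(nL^2)$ gives the bound $\exp(-2t^2/(nL^2))$.

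Finally, apply the same argument to $-f$, which satisfies the same Lipschitz condition, to bound the lower tail. A union bound over the two tails gives the factor $2$ and hence the stated inequality.
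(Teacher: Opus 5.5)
Your proposal is correct: it is the standard Doob-martingale argument with conditional Hoeffding's lemma, and your constants check out ($s=4t/(nL^2)$ gives $\exp(-2t^2/(nL^2))$ per tail). The paper gives no proof of this lemma and simply cites McDiarmid's survey, and your argument is essentially the bounded-differences martingale proof given there.
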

\begin{lemma}\cite[Lemma 6.1]{liebenau2023asymptotic}
\label{lem:hmc}
Let $c>0$, and let $f$ be a function defined on the set of subsets of some set $U$ such that $|f(U_1)-f(U_2)|\le c$ whenever $|U_1|=|U_2|=m$ and $|U_1\cap U_2|=m-1$.
Let $A$ be a uniformly random $m$-subset of $U$.
Then, for any $\alpha>0$, we have
\[\mathbb{P}\left[
  \left|f(A)-\mathbb{E}[f(A)]\right|
  \ge \alpha c\sqrt{m}
\right]
\le 2\exp(-2\alpha^2).\]
\end{lemma}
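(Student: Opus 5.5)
We will prove Lemma~\ref{lem:hmc} with a Doob martingale and the Azuma--Hoeffding inequality in its ``bounded range'' form. This form is the martingale analogue of Lemma~\ref{lem:McDiarmid's_inequality}: if $Z_0,\dots,Z_m$ is a martingale and, conditional on $Z_0,\dots,Z_{i-1}$, each $Z_i$ is confined to an interval of length $c_i$, then $\mathbb{P}[|Z_m-Z_0|\ge t]\le 2\exp(-2t^2/\sum_i c_i^2)$. We take all $c_i=c$ and $t=\alpha c\sqrt m$, so the bound reads $2\exp(-2\alpha^2c^2m/(mc^2))=2\exp(-2\alpha^2)$, exactly as claimed. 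The whole task is therefore to build the martingale and verify the range condition.

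Generate $A$ by choosing distinct elements $a_1,\dots,a_m$ of $U$ sequentially, each uniformly among the elements not yet chosen, and set $A=\{a_1,\dots,a_m\}$. This gives a uniformly random $m$-subset. Define the Doob martingale
\[
Z_i:=\mathbb{E}[f(A)\mid a_1,\dots,a_i],\qquad i=0,\dots,m.
\]
Then $Z_0=\mathbb{E}[f(A)]$ and $Z_m=f(A)$. Fix a history $a_1,\dots,a_{i-1}$, and for $x\in U\setminus\{a_1,\dots,a_{i-1}\}$ write $g(x):=\mathbb{E}[f(A)\mid a_1,\dots,a_{i-1},a_i=x]$. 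Given the history, $Z_i=g(a_i)$ takes values in $[\min_x g(x),\max_x g(x)]$. So it suffices to show that $|g(x)-g(y)|\le c$ for any two admissible $x\neq y$.

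This comparison is the key step, and we handle it by a coupling. Let $\tau$ be the transposition of $U$ swapping $x$ and $y$. Conditional on $a_i=x$, the remainder $(a_{i+1},\dots,a_m)$ is a uniformly random ordered sequence of distinct elements of $U\setminus\{a_1,\dots,a_{i-1},x\}$. Applying $\tau$ to each entry gives a uniformly random such sequence from $U\setminus\{a_1,\dots,a_{i-1},y\}$, which is exactly the conditional law given $a_i=y$. Under this coupling, let $A_x$ and $A_y$ be the two resulting $m$-sets.
\begin{itemize}
\item If $y$ does not appear among $a_{i+1},\dots,a_m$, then $A_y=(A_x\setminus\{x\})\cup\{y\}$. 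Hence $|A_x\cap A_y|=m-1$, and the hypothesis gives $|f(A_x)-f(A_y)|\le c$.
\item If $y$ does appear among $a_{i+1},\dots,a_m$, then $\tau$ merely exchanges the roles of $x$ and $y$, so $A_y=A_x$ and the difference is $0$.
\end{itemize}
Taking expectations over the coupling yields $|g(x)-g(y)|\le c$.

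Hence each increment $Z_i$ lies, given the past, in an interval of length at most $c$, and the Azuma--Hoeffding bound above finishes the proof. The only point requiring care is using the bounded-range version of Azuma--Hoeffding (Hoeffding's lemma applied to conditional ranges) rather than the weaker $|Z_i-Z_{i-1}|\le c$ version. The latter would only give $2\exp(-\alpha^2/2)$, not the stated constant $2$ in the exponent. The coupling step is the main conceptual obstacle, and the transposition argument resolves it.
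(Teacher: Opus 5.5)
Your proof is correct. The paper gives no proof of this lemma: it is quoted from Liebenau and Wormald and used as a black box, only in the proof of Theorem~\ref{thm:main_structural_theorem_ell_degree} to split $U'$ into $U^+$ and $U^-$. So there is no in-paper argument to compare with.

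Your route is the standard one for statements of this type, and it yields exactly $2\exp(-2\alpha^2)$. You expose $a_1,\dots,a_m$ one at a time. You then bound the conditional range of the Doob martingale by the transposition coupling, which gives either $A_y=A_x$ or a single exchange with $|A_x\cap A_y|=m-1$. Thus only $m$-sets are ever compared, which is exactly what the hypothesis allows. Finally, you apply the conditional-range form of Azuma--Hoeffding, obtained from Hoeffding's lemma.

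In a write-up, make one point explicit: this martingale inequality is not among the paper's preliminaries. Lemma~\ref{lem:McDiarmid's_inequality} is stated for independent coordinates, and $a_1,\dots,a_m$ are not independent. You could bring that lemma into play, for instance by taking $A$ to be the $m$ elements of $U$ with the smallest independent random labels. But then it only gives $|U|$ in place of $m$ in the exponent, i.e.\ roughly $2\exp(-2\alpha^2 m/|U|)$. Your martingale is what makes the bound depend on $m$ alone, so cite or derive the conditional-range version, as you indicate.
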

A \emph{lattice} in the Euclidean space $\mathbb{R}^d$ is an additive subgroup of $\mathbb{R}^d$ which is discrete.
In this paper we only consider lattices which are subsets of $\mathbb{Z}^d$.
A lattice in $\mathbb{R}^d$ is said to have \emph{full rank} if it contains $d$ linearly independent vectors.
A set $A$ in $\mathbb{R}^d$ is \emph{convex} if we have $(1-\theta)x+\theta y\in A$ whenever $x,y\in A$ and $0\le\theta\le1$.
We call $A$ a \emph{convex body} if it is convex, open, non-empty and bounded.
Next, we recall a result of Gauss concerning the intersection of a large convex body with a lattice of full rank

\begin{lemma}\cite[Lemma 3.22]{tao2006additive}\label{lem:lattice_points}
Let \(\Gamma\subseteq \mathbb R^d\) be a full-rank lattice, and let
\(B\subseteq \mathbb R^d\) be a convex body. Then, for all sufficiently
large \(R>0\), we have
\[
 |(R\cdot B)\cap \Gamma|
 =
 \bigl(R^d\pm O_{\Gamma,B,d}(R^{d-1})\bigr)
 \frac{\operatorname{mes}(B)}{\operatorname{covol}(\Gamma)}.
\]
Here \(\operatorname{covol}(\Gamma)\) denotes the volume of a fundamental domain of \(\Gamma\).
Moreover, the same estimate holds with \(\Gamma\) replaced by any translate \(\mathbf v+\Gamma\) of \(\Gamma\) where $\mathbf v\in \mathbb R^d$, with the same covolume \(\operatorname{covol}(\Gamma)\).
\end{lemma}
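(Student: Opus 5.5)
The plan is the standard volume-packing argument for lattice point counts, with the boundary error controlled by homotheties of the convex body.

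\emph{Setup.} Fix a basis $b_1,\dots,b_d$ of $\Gamma$ and let $D:=\{\sum_i t_ib_i: t_i\in[0,1)\}$ be the half-open fundamental parallelepiped. Then $\operatorname{mes}(D)=\operatorname{covol}(\Gamma)$, and the translates $\{x+D: x\in \mathbf v+\Gamma\}$ tile $\mathbb R^d$ for any $\mathbf v\in\mathbb R^d$. Let $\delta:=\operatorname{diam}(D)$, a constant depending only on $\Gamma$. Since the argument below never uses that $0\in\mathbf v+\Gamma$, it handles a lattice and all its translates simultaneously, with the same covolume.

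\emph{Sandwiching.} For a set $A$ and $\eta>0$, write $A^{+\eta}$ for the set of points within distance $\eta$ of $A$. Write $A^{-\eta}$ for the set of points of $A$ at distance more than $\eta$ from $\mathbb R^d\setminus A$. Let $N:=|(R\cdot B)\cap(\mathbf v+\Gamma)|$.
\begin{itemize}
\item[(i)] If $x\in(\mathbf v+\Gamma)\cap(R\cdot B)^{-\delta}$, then $x\in R\cdot B$ and $x+D\subseteq R\cdot B$. These tiles are disjoint, so $|(\mathbf v+\Gamma)\cap (R\cdot B)^{-\delta}|\cdot\operatorname{covol}(\Gamma)\le \operatorname{mes}(R\cdot B)$.
\item[(ii)] Every $x\in(\mathbf v+\Gamma)\cap R\cdot B$ satisfies $x+D\subseteq (R\cdot B)^{+\delta}$. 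Hence $N\cdot\operatorname{covol}(\Gamma)\le\operatorname{mes}((R\cdot B)^{+\delta})$.
\item[(iii)] Every point $y\in (R\cdot B)^{-\delta}$ lies in a unique tile $x+D$ with $|x-y|\le\delta$, and then $x\in R\cdot B$. So the tiles indexed by lattice points of $R\cdot B$ cover $(R\cdot B)^{-\delta}$, giving $N\cdot\operatorname{covol}(\Gamma)\ge\operatorname{mes}((R\cdot B)^{-\delta})$.
\end{itemize}
Together, (ii) and (iii) give
\[
\operatorname{mes}((R\cdot B)^{-\delta})\le N\cdot \operatorname{covol}(\Gamma)\le \operatorname{mes}((R\cdot B)^{+\delta}).
\]
By scaling, $(R\cdot B)^{\pm\delta}=R\cdot B^{\pm\delta/R}$. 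It therefore suffices to show that $\operatorname{mes}(B^{\pm\varepsilon})=\operatorname{mes}(B)\pm O_{B,d}(\varepsilon)$ for small $\varepsilon=\delta/R$. Multiplying by $R^d$ then yields $R^d\operatorname{mes}(B)\pm O(R^{d-1})$, which is the claim.

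\emph{Boundary estimate.} This is the only genuinely geometric step. Since $B$ is open, nonempty and bounded, choose $b_0\in B$ and $\rho>0$ with the ball $B(b_0,\rho)\subseteq B$. I would prove the two homothety containments
\[
b_0+(1-\varepsilon/\rho)(B-b_0)\subseteq B^{-\varepsilon}, \qquad B^{+\varepsilon}\subseteq b_0+(1+\varepsilon/\rho)(\overline B-b_0).
\]
\begin{itemize}
\item For the first containment: if $y=b_0+\lambda(z-b_0)$ with $z\in B$ and $\lambda=1-\varepsilon/\rho$, then $y+u$ for $|u|\le\varepsilon$ is the convex combination $\lambda z+(1-\lambda)\bigl(b_0+u/(1-\lambda)\bigr)$. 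Its second point lies in $B(b_0,\rho)\subseteq B$, so $y+u\in B$ by convexity.
\item For the second containment: a point $w$ at distance at most $\varepsilon$ from $\overline B$ is shown by the same convex-combination trick, run in reverse, to satisfy $b_0+(w-b_0)/(1+\varepsilon/\rho)\in\overline B$.
\end{itemize}
Since $\operatorname{mes}(\overline B)=\operatorname{mes}(B)$ for convex $B$ (the boundary of a convex body is null, e.g.\ by the same homothety squeeze), we get
\[
(1-\varepsilon/\rho)^d\operatorname{mes}(B)\le\operatorname{mes}(B^{-\varepsilon})\le\operatorname{mes}(B^{+\varepsilon})\le(1+\varepsilon/\rho)^d\operatorname{mes}(B).
\]
This gives the required $O_{B,d}(\varepsilon)$ error. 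The bound is valid once $R>\delta/\rho$, which is the meaning of ``sufficiently large $R$''.
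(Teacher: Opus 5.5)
Your argument is correct. It is the standard fundamental-domain tiling argument, with a homothety bound on the boundary layer, and that is what the cited Tao--Vu lemma rests on. The only small point is that the ball $B(b_0,\rho)$ should be taken closed, since $|u|=\varepsilon$ puts $b_0+u/(1-\lambda)$ on its boundary sphere.

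The paper gives no proof beyond the citation and the remark that translating $\Gamma$ merely translates the tiling. That remark is exactly your observation that nothing uses $0\in\mathbf v+\Gamma$, which also makes the implied constant uniform in $\mathbf v$, as the paper's application needs.
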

The final assertion follows from the same proof as the lattice case, since translating the lattice only translates the corresponding fundamental-domain tiling and does not change the covolume.
The following proposition tells us that if the coset group $\mathbb Z^{d}/A$  is finite, then 
$A$ must be a subgroup of $\mathbb Z^{d}$ of full rank $d$.
\begin{proposition}\label{prop:fullrank}
Let \(A\subseteq \mathbb Z^{d}\) be an additive subgroup.
If $\left|\mathbb Z^{d}/A\right|=c<\infty$, then \(A\) is a full-rank lattice in \(\mathbb Z^{d}\).
\end{proposition}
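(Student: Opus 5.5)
The plan is to show that $A$ contains $c\,\mathbf e_1,\dots,c\,\mathbf e_d$, where $\mathbf e_1,\dots,\mathbf e_d$ is the standard basis of $\mathbb Z^d$. Since these $d$ vectors are linearly independent, $A$ then has full rank. Discreteness is automatic, because $A\subseteq\mathbb Z^d$ and $\mathbb Z^d$ is discrete in $\mathbb R^d$; so $A$ is a lattice in the sense fixed above.

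The key step is Lagrange's theorem applied to the finite quotient. The group $G=\mathbb Z^d/A$ is finite of order $c$, so every element $g\in G$ satisfies $c\cdot g=0$. Taking $g=\mathbf e_i+A$ gives $c\,\mathbf e_i\in A$ for each $i\in[d]$.

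A pigeonhole argument gives the same conclusion without Lagrange. Among the $c+1$ vectors $0,\mathbf e_i,2\mathbf e_i,\dots,c\,\mathbf e_i$, two lie in the same coset. Their difference $m_i\mathbf e_i$, with $1\le m_i\le c$, is then a nonzero element of $A$. The vectors $m_1\mathbf e_1,\dots,m_d\mathbf e_d$ are linearly independent, which again shows that $A$ has full rank.

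No step here is a real obstacle. The only point needing care is to confirm that ``lattice'' in the paper means a discrete additive subgroup, and discreteness is inherited from $\mathbb Z^d$. If a basis is wanted, one can add that a subgroup of $\mathbb Z^d$ is free abelian of rank at most $d$. Since $A$ contains $d$ independent vectors, its rank is exactly $d$.
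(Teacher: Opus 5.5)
Your proposal is correct and follows the paper's proof: Lagrange's theorem on the order-$c$ quotient $\mathbb Z^d/A$ gives $c\,\mathbf e_i\in A$ for each $i\in[d]$, so $A$ contains $d$ linearly independent vectors. Your pigeonhole variant and the remark that discreteness is inherited from $\mathbb Z^d$ are both valid additions that the paper leaves implicit.
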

\begin{proof}
Let \(\textbf{e}_1,\dots,\textbf{e}_d\) be the standard basis of \(\mathbb Z^{d}\).
Since \(\mathbb Z^{d}/A\) is a finite abelian group of order \(c\),
every element of \(\mathbb Z^{d}/A\) has order dividing \(c\) by Lagrange's theorem.
In particular, for each \(i\in[d]\), $
c(\textbf{e}_i+A)=A$,
and hence $c\textbf{e}_i\in A$.
Thus \(A\) contains \(d\) linearly independent vectors $c\textbf{e}_1,\dots,c\textbf{e}_d$, and we are done.
\end{proof}

\section{A structural theorem}
\label{sec:generalfra}
Let $k,\ell \in \mathbb{N}$ where $\ell \le k-1$.
A $k$-graph $H=(V,E)$ consists of a vertex set of order $n$ and an edge set $E\subseteq\binom{V}{k}$.
For any $\ell$-subset $S$ of $V$ where $\ell\in[k-1]$, the \emph{degree} of $S$, denoted by $\deg_H(S)$ is the number of edges containing $S$.
The \emph{minimum} $\ell$-\emph{degree} $\delta_{\ell}(H)$ is the minimum of $\deg_H(S)$ over all $\ell$-subsets $S$ of $V$.
Let $F$ be an $r$-vertex $k$-graph and $D \in \mathbb{N}$. Define $\delta(F,\ell,D)$ to be the smallest number $\delta$ such that every $k$-graph $H$ on $n$ vertices with
$\delta_\ell(H) \ge \bigl(\delta + o_n(1)\bigr)\binom{n-\ell}{k-\ell}$
contains an $F$-packing covering all but at most $D$ vertices. 
We write $\delta(k,\ell,D)$ for $\delta(F,\ell,D)$ when $F$ is a single edge.
More generally, Han and Treglown \cite{han2020complexity} established the following general lattice-based criterion for factors in dense hypergraphs. 

\begin{theorem}\cite[Theorem 3.1]{han2020complexity}\label{thm:Han_structural_theorem}
         Let $k,\ell\in \mathbb{N}$ where $\ell\le k-1$ and let $F$ be an $r$-vertex $k$-graph. 
         Define $D,q,t,n_0\in\mathbb{N}$ and $\beta,\mu,\gamma,c>0$ where
         \[
         1/n_0\ll\beta,\mu\ll\gamma,c,1/r,1/D,1/q,1/t.
         \]
         Let $H$ be a $k$-graph on $n\ge n_0$ vertices where $r$ divides $n$. 
         Suppose that 
         \begin{enumerate}[label=(\roman*),font=\normalfont]
             \item $\delta_{\ell}(H)\ge (\delta(F,\ell,D)+ \gamma)\binom{n-\ell}{k-\ell}$;\label{item:\romannumeral1}
             \item $\mathcal{P}=\{V_1, \dots, V_d\}$ is an $(F,\beta,t,c)$-good partition of $V(H)$;\label{item:\romannumeral2}
             \item $|Q(\mathcal{P},L_{\mathcal{P},F}^{\mu}(H))|\le q$.\label{item:\romannumeral3}
         \end{enumerate}
         Then $H$ contains an $F$-factor if and only if $(\mathcal{P},L^{\mu}_{\mathcal{P},F}(H))$ is $q$-soluble.
\end{theorem}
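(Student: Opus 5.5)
The plan is to prove the two directions separately. The forward direction is purely algebraic. The backward direction is an absorbing argument that exploits the closed parts of $\mathcal P$ together with the lattice $L:=L^{\mu}_{\mathcal P,F}(H)$. Throughout, set $Q:=Q(\mathcal P,L)$, so $|Q|\le q$.

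\emph{Necessity.} Suppose $H$ has an $F$-factor $\mathcal F$. Every copy $F'\in\mathcal F$ has an $r$-vector $\mathbf i_{\mathcal P}(V(F'))\in L^d_{\max}$, and so it determines a coset in $Q$. Copies whose index vector lies in $I^{\mu}_{\mathcal P,F}(H)$ have trivial coset; call the remaining copies bad. We repeatedly discard from the list of bad copies a nonempty subfamily whose cosets sum to $0$ in $Q$. Such a subfamily exists whenever at least $|Q|$ bad copies remain: among the partial sums $s_0=0,s_1,\dots,s_{|Q|}$ two coincide by pigeonhole, and the copies between them form the zero-sum block.

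This stops with a family $M$ of at most $|Q|-1\le q$ bad copies. The vertices not covered by $M$ are covered by robust copies (index vectors in $L$) and by zero-sum blocks of bad copies, whose total index vector also lies in $L$. Hence $\mathbf i_{\mathcal P}(V(H)\setminus V(M))\in L$, so $M$ is a $q$-solution.

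\emph{Sufficiency.} Let $M$ be a $q$-solution. I will carry out the following steps in order.

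\begin{enumerate}
\item \emph{Transferrals.} Since each $V_i$ is $(F,\beta,t)$-closed, for any two vertices $u,v$ in the same part there are $\beta n^{tr-1}$ sets $S$ such that $H[S\cup\{u\}]$ and $H[S\cup\{v\}]$ both have $F$-factors. Combining these with the $\mu n^r$ copies for each robust vector, I show the following. For every small set $W$ with $\mathbf i_{\mathcal P}(W)\in L$ and $|W|\le D'$ (some constant $D'\ge D$), there are $\Omega(n^{c'})$ ``absorbing'' sets $A_W$ of bounded size $c'$ such that both $H[A_W]$ and $H[A_W\cup W]$ have $F$-factors.

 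Writing $\mathbf i(W)$ as an integer combination of robust vectors uses $|Q|\le q$: by Proposition~\ref{prop:fullrank}, $L$ has full rank and bounded generators. The negative coefficients are realised by swapping vertices within a part via reachable sets.

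\item \emph{Absorbing set.} A standard random selection (keep each bounded-size candidate with probability $\sim\theta n^{1-c'}$, then delete overlaps) gives a set $A$ of size at most $\theta n$, disjoint from $V(M)$. It has the property that $H[A\cup W]$ has an $F$-factor for every $W\subseteq V(H)\setminus(A\cup V(M))$ with $|W|\le D'$ and $\mathbf i_{\mathcal P}(W)\in L$. Moreover, $\mathbf i_{\mathcal P}(A)\in L$, since $H[A]$ is tiled by robust copies.

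\item \emph{Almost cover.} On $H':=H-V(M)-A$, apply the definition of $\delta(F,\ell,D)$, since the degree condition survives with $\gamma/2$. This gives a tiling covering all but at most $D$ vertices. The danger is that this tiling uses non-robust copies, which would destroy the lattice bookkeeping.

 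To prevent this, first delete from $H$ one edge from each copy of $F$ with a non-robust index vector. There are fewer than $\binom{r+d}{d}\mu n^r$ such copies, so only $O(\mu n^{r})$ of them need hitting. I choose the edges so that they are spread out; if this fails I instead delete those edges with $\ell$-sets of large loss, and handle these by putting their vertices into the absorber first. Either way the minimum $\ell$-degree drops by at most $\gamma/4$ for $\mu\ll\gamma$.

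\item \emph{Finish.} Now all copies in the almost-tiling are robust. The leftover $W$ therefore satisfies
\[\mathbf i(W)=\mathbf i(V(H)\setminus V(M))-\mathbf i(A)-\sum\mathbf i(\text{copies})\in L,\]
and $A$ absorbs it.
\end{enumerate}

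The main obstacle is step 1: the lattice/transferral absorbing lemma. It must show that every small $W$ with index vector in $L$ has polynomially many absorbers, with exponents bounded in terms of $q,t,r,d$. The second delicate point is the edge-deletion in step 3 to force robust copies. If naive deletion fails there, I would instead apply the almost-cover to a random subset, where non-robust copies are rare, and put their few vertices into the absorbed leftover.
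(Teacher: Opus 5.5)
The paper does not prove this theorem; it quotes it from Han--Treglown. So your proposal has to be judged on its own. Your necessity argument is correct and is the standard one: any $|Q|$ elements of $Q$ contain a nonempty zero-sum block, so all but at most $|Q|-1\le q$ non-robust copies of an $F$-factor can be grouped into blocks whose index vectors sum into $L$.

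The sufficiency argument has a genuine gap at Step 3. Nothing in (i)--(iii) prevents a vertex from lying only in non-robust copies of $F$. Killing all non-robust copies then isolates that vertex, so condition (i) fails and $\delta(F,\ell,D)$ cannot be invoked. Here is an example.
\begin{itemize}
\item Let $H$ be the $3$-graph on $V_1\cup V_2$ with $V_1=V_1'\cup\{x\}$, whose edges are the triples meeting $V_1'$ in an odd number of vertices. So $x$ behaves like a vertex of $V_2$.
\item Add a sprinkling of wrong-parity triples avoiding $x$, of density $\mu'$ with $\beta\ll\mu'\ll\mu$. This is allowed, since the hierarchy does not order $\beta$ and $\mu$.
\item Then $\delta_2(H)\approx n/2$, both parts are $(\beta,2)$-closed, $I^{\mu}_{\mathcal P}(H)=\{(3,0),(1,2)\}$ and $|Q|=2$.
\item Yet every edge at $x$ has the non-robust index vector $(2,1)$, and every perfect matching must cover $x$ by such an edge.
\end{itemize}
Replacing $x$ by a set of about $\mu n$ such vertices gives linearly many pairs $\{x,a\}$ that lose almost all their codegree. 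That is far more than an absorber built for sets of bounded size $D'$ can take in, so the fallback of ``putting these vertices into the absorber first'' does not work. Passing to a random subset does not help either, because sampling leaves the proportion of non-robust copies unchanged.

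The missing idea is that you do not need a robust almost-cover at all; reuse your necessity argument instead.
\begin{itemize}
\item Apply $\delta(F,\ell,D)$ directly to $H-V(M)-A$.
\item Extract zero-sum blocks from the non-robust copies of the resulting tiling until at most $q-1$ remain, and release their vertices into the leftover.
\item The leftover then has at most $D+(q-1)r$ vertices, so take $D'\ge D+(q-1)r$ in Step 1. Its index vector lies in $L$, provided $\mathbf i_{\mathcal P}(A)\in L$.
\end{itemize}
That last claim is also unjustified as you state it. $H[A]$ is tiled partly by $F$-factors of sets $S\cup\{y\}$ with $S$ a reachable set, and those copies need not be robust. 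It is repaired the same way: apply the zero-sum argument to the cosets $\mathbf i_{\mathcal P}(A_j)+L$ of the selected absorbers and discard at most $q-1$ of them. Every $W$ still has linearly many absorbers afterwards. With these two changes, Steps 1, 2 and 4 go through as you describe.
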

Theorem~\ref{thm:Han_structural_theorem} is deterministic. 
A natural question is whether this lattice-based framework is robust under random vertex sampling. 
In particular, while one may expect local density conditions to inherit in a random induced subhypergraph, it is far less clear whether the global divisibility
information encoded by the lattice \(L^\mu_{\mathcal P,F}(H)\) continues to control the existence of
an \(F\)-factor after sampling. This is the question that we address in this paper.

The results about induced subgraphs mostly concern special spanning structures in graphs, whereas here we focus on a general framework about $F$-factors in random induced subhypergraphs.
In this paper, we focus on studying the probability that $H[p]$ contains an $F$-factor where $H$ is a dense $k$-graph satisfying a suitable minimum $\ell$-degree condition, and that $H$ admits an $(F,\beta,t,c)$-good partition $\mathcal P$ such that the associated coset group
$
Q(\mathcal P,L^\mu_{\mathcal P,F}(H))
$
has size $q$. 
We estimate the probability that the random induced subhypergraph $H[p]$ contains an $F$-factor as follows, yielding the number of subsets of $H$ that induce $F$-factors.

The following theorem is our main structural result.

\begin{theorem}\label{thm:main_structural_theorem_ell_degree}
Let $k,\ell\in \mathbb{N}$ with $\ell\le k-1$, and let $F$ be an $r$-vertex $k$-graph. 
Define $C',D,q,d,t,n_0\in\mathbb{N}$ and $\varepsilon,\rho,\beta,\gamma,\mu_0,\eta,c>0$ such that
\[
1/n_0\ll1/C'\ll\beta,\mu_0\ll\varepsilon,\rho,\gamma,c,\eta,1/r,1/D,1/q,1/d,1/t\]
together with \(\rho \ll\gamma,\eta\).
Let $H$ be a $k$-graph on $n\ge n_0$ vertices with \(\delta_{\ell}(H)\ge (\delta(F,\ell,D)+ \gamma)\binom{n-\ell}{k-\ell},\) and let $\mathcal{P}=\{V_0,V_1,\dots,V_d\}$ be a partition of \(V(H)\).
Suppose that
\begin{enumerate}[label=(\roman*),font=\normalfont]
    \item\label{item:V_0}
    $|V_0|\le \rho  n$, and every vertex in \(V_0\) lies in at least \(\eta n^{r-1}\) copies of \(F\);
    \item\label{item:V_i}
    for every \(i\in [d]\), $|V_i| \ge cn$, and $V_i$ is $(F,\beta,t)$-closed in \(H\left[\bigcup_{i\in[d]} V_i\right]\);

    \item\label{item:finite_coset}
    $|Q(\mathcal{P},L_{\mathcal{P},F}^{\mu_0}(H))|=q$.
\end{enumerate}
Let \(h:=\max\{2(tr-1),2r\}\). Suppose that \(p\ge C'(\log{n}/n)^{1/h}\) and \(p(1-p)\ge C'n^{-1/2}\). 
Then \(\mathbb{P}[H[p]\text{ contains an $F$-factor }]\ge \frac{1}{rq}-\varepsilon\).
In particular, if the index vector of every copy of \(F\) in \(H\) is $\mu_0$-robust, then \(\mathbb{P}[H[p]\text{ contains an $F$-factor }]= \frac{1}{rq} \pm \varepsilon\).
\end{theorem}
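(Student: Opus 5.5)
The plan is to run the deterministic criterion of Theorem~\ref{thm:Han_structural_theorem} inside $H':=H[p]$ and reduce the question to a lattice-point count for the random index vector of $V_p$. Write $V_i':=V_i\cap V_p$, let $\mathcal P':=\{V_0',V_1',\dots,V_d'\}$ and $n':=|V_p|$.

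\textbf{Step 1: the hypotheses pass to $H'$ with high probability.} We show that with probability $1-o(1)$ the graph $H'$ satisfies all three hypotheses of Theorem~\ref{thm:Han_structural_theorem}, up to the exceptional set $V_0'$, with slightly weaker constants.
\begin{itemize}
\item[(a)] By Lemma~\ref{lem:Hoeffding} and a union bound, $n'=(1\pm o(1))pn$ and $|V_i'|=(1\pm o(1))p|V_i|$. Hence $|V_0'|\le 2\rho n'$ and $|V_i'|\ge cn'/2$. Also $\delta_\ell(H')\ge(\delta(F,\ell,D)+\gamma/2)\binom{n'-\ell}{k-\ell}$. This follows by conditioning on each $\ell$-set and applying McDiarmid (Lemma~\ref{lem:McDiarmid's_inequality}) or Lemma~\ref{lem:hmc} to the number of edges through it; the condition $p(1-p)\ge C'n^{-1/2}$ and $pn\gg\log n$ give the union bound over $n^\ell$ sets.
\item[(b)] Closedness is inherited. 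For $u,v\in V_i$, the number of reachable $(tr-1)$-sets surviving in $V_p$ is a polynomial of degree $tr-1$ in the indicators, with expectation $\beta p^{tr-1}n^{tr-1}$. A Kim--Vu/Janson type argument, or Lemma~\ref{lem:hmc} after conditioning on $n'$, gives concentration, and we union bound over pairs. This is exactly where $p\ge C'(\log n/n)^{1/h}$ with $h\ge 2(tr-1)$ is needed: the variance bound has the shape $n^{2(tr-1)-1}$ against a squared mean $p^{2(tr-1)}n^{2(tr-1)}$.
\item[(c)] Likewise, each robust index vector in $I^{\mu_0}_{\mathcal P,F}(H)$ remains $\mu_0p^r/2$-robust in $H'$; this uses $h\ge 2r$. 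Each vertex of $V_0'$ still lies in $\Omega(\eta n'^{r-1})$ copies of $F$.
\end{itemize}
Consequently $L':=L^{\mu}_{\mathcal P',F}(H')\supseteq L^{\mu_0}_{\mathcal P,F}(H)=:L$, and so $|Q(\mathcal P',L')|\le q$.

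\textbf{Step 2: deal with $V_0'$ and invoke Theorem~\ref{thm:Han_structural_theorem}.} Cover $V_0'$ greedily by disjoint copies of $F$. This is possible because $|V_0'|\le2\rho n'$ with $\rho\ll\eta$, and each vertex lies in many copies. Removing these copies leaves a graph $H''$ on $\bigcup_i V_i'$ that still has the degree condition, the good partition and the lattice property.

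Theorem~\ref{thm:Han_structural_theorem} then says that $H''$ has an $F$-factor if and only if $(\mathcal P'',L'')$ is $q$-soluble. It is sufficient that the empty tiling is a solution, i.e.\ that the index vector of the leftover set lies in $L$.

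The covering of $V_0'$ uses a bounded number of distinct index vectors and changes the index vector in a controlled way. So I would instead fix the covering first and require that $\mathbf{i}_{\mathcal P}(V_p)-\mathbf w\in L$, where $\mathbf w$ is the total index vector of the covering (its $V_0$ coordinates are dropped). To decouple $\mathbf w$ from $V_p\setminus V_0$, I would first expose $V_p\cap V_0$. I would also choose the covering copies to use vertices from a small reserved random subset, so that $\mathbf w$ is essentially determined before the remaining randomness.

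\textbf{Step 3: the lattice probability, which I expect to be the main obstacle.} It remains to show that $\mathbb P[\mathbf X\in \mathbf w+L]\ge 1/(rq)-\varepsilon/2$, where $\mathbf X=(|V_i'|)_{i\in[d]}$ is a vector of independent binomials.

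By Proposition~\ref{prop:fullrank}, $L$ has full rank. Its index in $\mathbb Z^d$ is $r\cdot q$, since $[\mathbb Z^d:L_{\max}^d]=r$ and $[L_{\max}^d:L]=q$.

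The vector $\mathbf X$ is concentrated in a box of side $\sqrt{p(1-p)n}\ge \sqrt{C'}n^{1/4}\to\infty$ around its mean. Partition a neighbourhood of the mean into cubes of side $R$ with $1\ll R\ll\sqrt{p(1-p)n}$. On each cube the binomial probability mass function is nearly constant, by a local limit theorem, or directly because adjacent ratios are $1\pm O(R/\sqrt{p(1-p)n})$. By Lemma~\ref{lem:lattice_points}, applied to the translate $\mathbf w+L$, each cube contains a $(1\pm o(1))/(rq)$ fraction of lattice points. Summing over cubes gives probability $1/(rq)\pm\varepsilon/2$.

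\textbf{The ``in particular'' statement.} If every copy of $F$ in $H$ has a $\mu_0$-robust index vector, then every $F$-factor of $H[p]$ forces $\mathbf i_{\mathcal P}(V_p)\in L$ up to the $V_0$ correction. This gives the matching upper bound $1/(rq)+\varepsilon$ by the same count.

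The delicate points are the decoupling of $V_0'$ in Step 2 and the concentration in (b) at small $p$. I would handle the second by applying Lemma~\ref{lem:hmc} conditionally on $|V_p|$, with Lipschitz constant $O(n^{tr-2})$.
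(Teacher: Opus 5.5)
Your architecture matches the paper's. You inherit the hypotheses to $H[p]$, cover $V_0'$ greedily, and apply Theorem~\ref{thm:Han_structural_theorem} with the empty tiling as the solution. You then compute the coset probability by tiling a $\Theta(\sqrt n)$-box with cubes of side $R$, on which the binomial mass is nearly flat, and applying Lemma~\ref{lem:lattice_points} to translates of $L$. Step~3 and the ``in particular'' part match the paper. In the latter, the hypothesis actually forces $V_0=\emptyset$, because a copy of $F$ through $V_0$ has an index vector of sum less than $r$ and so cannot be robust.

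\textbf{The gap is in Step~2.} You assert that after deleting the cover of $V_0'$, the graph $H''$ still has the good partition and the lattice property. But the hierarchy has $\beta,\mu_0\ll\rho$, while the cover uses up to $2r\rho m$ vertices, where $m=|V_p|$. Each vertex lies in up to $m^{tr-2}$ of the $(tr-1)$-sets and up to $m^{r-1}$ of the $r$-sets. So this deletion can wipe out all $\beta m^{tr-1}$ reachable sets of a pair, or all $\mu m^r$ copies of $F$ with a given robust index vector. Hence closedness and robustness in $H''$ do not follow from your (b) and (c); only the degree condition survives automatically, since $\rho\ll\gamma$.

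The paper handles this as follows:
\begin{itemize}
\item It randomly splits $V_p\setminus V_0'$ into halves $U^+$ and $U^-$, and builds the cover inside $U^+$.
\item It uses Lemma~\ref{lem:hmc} to certify that every pair in $V_i'$ has $\beta^2m^{tr-1}$ reachable sets inside $U^-$, that every robust vector has $(\mu^*)^2m^r$ copies inside $U^-$, and that $|V_i'\cap U^-|\ge cm/10$.
\item All of these survive deletion because $U^-\subseteq V(H'')$.
\end{itemize}
Your ``small reserved random subset'' could play the role of $U^+$. However, you would then also have to certify reachability, robustness and part sizes in its complement, and the proposal does not do this.

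\textbf{The decoupling also needs to be made precise.} If the reserved set is carved out of $V_p$ after sampling, then $\mathbf w$ is correlated with $\mathbf i_{\mathcal P}(V_p)$. Your idea works in the following form. Choose $R$ independently of $V_p$ and expose $V_p\cap(V_0\cup R)$. Then apply the argument of Lemma~\ref{lem:lattice_prob} to the independent binomials $\mathbf i_{\mathcal P}(V_p\setminus(V_0\cup R))$, with target coset $\mathbf w-\mathbf i_{\mathcal P}(V_p\cap R)+L$.

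The paper's device is simpler. By pigeonhole it fixes in advance, for each $v\in V_0$, an $(r-1)$-vector $\mathbf w_v$ realized by $\Omega(\eta n^{r-1})$ completing sets. It then forces the cover to use exactly these vectors. The target coset $\sum_{v\in V_0'}\mathbf w_v+L$ therefore depends only on $V_p\cap V_0$, which is independent of $\mathbf i_{\mathcal P}(V_p)$.

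\textbf{A normalization slip in (c).} A vector that is $\mu_0$-robust in $H$ becomes $\mu_0/2$-robust in $H'$ relative to $|V(H')|^r$, not $\mu_0p^r/2$-robust. A $p$-dependent robustness parameter would break the constant hierarchy that Theorem~\ref{thm:Han_structural_theorem} requires.
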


The quantity $1/rq$ in the probability has a natural arithmetic interpretation.
The factor \(1/r\)
corresponds to the necessary divisibility condition on the total number of sampled vertices, while
the factor \(1/q\) reflects the number of cosets of the lattice \(L^\mu_{\mathcal P,F}(H)\) inside
\(L^d_{\max}\). 
Roughly speaking, our argument shows that with probability asymptotic to \(1/(rq)\), the
sampled index vector lands in a residue class for which the induced lattice system is already soluble.

Our result may be viewed as a random induced analogue of the lattice-based theory for factors in dense hypergraphs. More broadly, it shows that the divisibility structure encoded by the lattice method is robust under random vertex deletion. It would be interesting to understand whether similar ideas can be used to obtain sharper probability estimates, threshold phenomena, or extensions to other random substructure models.
Next, we present the applications of Theorem \ref{thm:main_structural_theorem_ell_degree}.

\subsection{$F$-factors: a proof of Theorem~\ref{thm:unbalancedfactor}}
Shokoufandeh and Zhao~\cite{shokoufandeh2003proof} showed that $\delta(F,1,\allowbreak 5r^2)=1-1/\chi_{cr}(F)$. 
We also need that graphs $H$ with $\delta(H)\ge (1-1/{\chi}_{cr}(F)+\gamma)n$ admit a good partition $\mathcal P$ and the robust $F$-lattice defined on $\mathcal P$ has finite order, which were all indeed shown in \cite{han2020complexity}.
We summarize these results to the following lemma.

\begin{lemma}\cite{han2020complexity}\label{lem:factor}
    Let $k,r,n\ge 2$ be integers and $\mu,\gamma,\beta >0$ where $ 1/n\ll \beta,\mu\ll\gamma,1/k$.
    Let $F$ be an unbalanced $r$-vertex $k$-chromatic graph and $h:=r^{k-1}$. 
    For each $n$-vertex graph $H$ with $\delta(H)\ge (1-1/{\chi}_{cr}(F)+\gamma)n$, there exists an $(F,\beta,2^{h-1},1/r)$-good partition $\mathcal{P}$ of $ V(H)$. Moreover, \(|Q(\mathcal{P},L_{\mathcal{P},F}^{\mu}(H))|\le (2r-1)^r.\) 
\end{lemma}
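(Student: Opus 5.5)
The plan follows the reachability and lattice approach of Han and Treglown~\cite{han2020complexity}, in two parts. First we build an $(F,\beta,2^{h-1},1/r)$-good partition by merging reachability classes. Then we bound the order of the coset group $Q(\mathcal P, L^{\mu}_{\mathcal P,F}(H))$ using the partition structure together with the minimum degree condition.

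\emph{Step 1: large reachable neighbourhoods.} We show that every vertex $v$ satisfies $|\tilde N_{F,\beta',1}(v,H)|\ge (1/r+\gamma/2)n$ for some small $\beta'$. Fix a $\chi(F)$-colouring of $F$ with a smallest colour class of size $\sigma(F)$. By $\delta(H)\ge(1-1/\chi_{cr}(F)+\gamma)n$ and supersaturation, $v$ lies in $\Omega(n^{r-1})$ copies of $F$. Take such a copy and a vertex $u$ in it. Deleting $u$ leaves an $(r-1)$-set $S$ with $H[S\cup\{v\}]$ containing $F$. We then count the vertices $w$ for which $H[S\cup\{w\}]$ also contains $F$; these are essentially the common neighbourhoods of the appropriate colour classes of $S$. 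The degree condition makes such a common neighbourhood have size at least $(1/r+\gamma)n$ for a positive proportion of $S$, because the parameter $\chi_{cr}$ is exactly tuned so that the smallest colour class can be extended. Averaging over $S$ then gives the bound.

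\emph{Step 2: merging into closed parts.} Next we show that any set of $h+1=r^{k-1}+1$ vertices contains two that are $(F,\beta',1)$-reachable. If not, we obtain many pairwise non-reachable vertices. Their neighbourhoods from Step~1 would have to be nearly disjoint in a robust sense. Counting copies of $F$ through these neighbourhoods, for example via embedding $K_{k}(r)$-type structures guaranteed by the degree condition, we contradict the degree bound.

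Given this, we apply the standard merging lemma. Start from the reachability relation. Whenever two classes share $\Omega(n)$ reachable pairs, merge them, noting that merging doubles the step $i$ (reachable $(ir-1)$-sets concatenate). This yields a partition into at most $h$ parts that are each $(F,\beta,2^{h-1})$-closed. Vertices left over in small parts are reassigned to a part into which they are reachable. Each part has size at least $(1/r+\gamma/4)n-o(n)\ge n/r$ by Step~1, which in particular forces $d\le r$.

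\emph{Step 3: the coset bound.} Let $d\le r$ be the number of parts. Since $d\le r$, it suffices to show that for each $i\in[d]$ the lattice contains $m_i\mathbf e_i$ modulo $\langle \mathbf e_j-\mathbf e_{j'}\rangle$-type relations, with some multiplier $m_i\le 2r-1$.

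\begin{itemize}
\item Connectedness of the reduced structure comes from the degree condition. Any two parts $V_i,V_j$ admit robust copies of $F$ whose index vectors differ in a small way, obtained by swapping one vertex between the parts through a common neighbourhood. This gives vectors of the form $\mathbf e_i-\mathbf e_j$, or bounded multiples of them, in $L$.
\item Each part admits a robust copy of $F$ meeting it, which supplies the corresponding $r$-vectors.
\item Using these generators, the quotient $L^d_{\max}/L$ is generated by the $d$ classes of the $\mathbf e_i$, each of order at most $2r-1$. This gives $|Q|\le(2r-1)^d\le(2r-1)^r$.
\end{itemize}

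The main obstacle is Step~2: proving that no $r^{k-1}+1$ vertices are pairwise non-reachable, which is what fixes both $h$ and the step $2^{h-1}$. Here I would follow the argument in \cite{han2020complexity} that relies on the critical chromatic number, namely extending the almost-$F$-tiling counting of Koml\'os and of Shokoufandeh and Zhao to a robust setting.
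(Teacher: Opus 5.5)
The paper does not prove this lemma itself. It takes the partition from the proof of \cite[Theorem 1.11]{han2020complexity} and the coset bound verbatim from \cite[Proposition 9.3]{han2020complexity}. Your outline has the same architecture (reachability, merging classes while the step doubles, then a coset bound), but as a proof it has genuine gaps.

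\textbf{Step~3 rests on a mechanism that fails.} Swapping a single vertex between $V_i$ and $V_j$ through a common neighbourhood would put $\mathbf e_i-\mathbf e_j$ itself into $L=L^{\mu}_{\mathcal P,F}(H)$. If that held along a connected set of pairs, then together with any robust $r$-vector $\mathbf w\in L$ it would give $L=L^d_{\max}$, i.e.\ $q=1$ always. Example~\ref{ex:infinite} rules this out. For $F=K_{2,4}$ and $H=K_{n/2-t,n/2+t}$ with $t$ fixed, which satisfy the hypotheses of the lemma, the two parts have disjoint neighbourhoods, $(1,-1)\notin L$, and $q=2$. The relation between the parts that does exist there, $2(\mathbf e_1-\mathbf e_2)=(4,2)-(2,4)$, comes from exchanging whole colour classes of unequal sizes. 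That is the kind of relation in which unbalancedness of $F$ enters. Your proposal neither produces bounded relations of this type for every pair of parts nor explains the constant $2r-1$.

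The concluding algebra is also off. Since $\mathbf e_i\notin L^d_{\max}$, the group $L^d_{\max}/L$ is not generated by the classes of the $\mathbf e_i$. In $\mathbb Z^d/L$ the order of $\mathbf e_i+L$ is a multiple of $r$, because every vector of $L$ has coordinate sum divisible by $r$. A correct reduction would use that $\{\mathbf w,\mathbf e_2-\mathbf e_1,\dots,\mathbf e_d-\mathbf e_1\}$ is a basis of $L^d_{\max}$. Then relations $m_i(\mathbf e_i-\mathbf e_1)\in L$ with $1\le m_i\le 2r-1$ would give $|Q|\le (2r-1)^{d-1}$. Deriving bounded relations of this kind from the structure of $H$ is exactly what is missing.

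\textbf{Steps~1 and~2 are not established either.} Step~2, which you rightly call the crux because it fixes $h=r^{k-1}$ and $t=2^{h-1}$, is deferred to \cite{han2020complexity} rather than proved. On that point your proposal amounts to the same citation the paper makes.

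In Step~1, you do not derive from the degree condition that the relevant common neighbourhoods have size at least $(1/r+\gamma)n$ for a positive proportion of the sets $S$. Even granting it, averaging does not give what you need. Suppose only a $c$-fraction of the $(r-1)$-sets $S$ with $F\subseteq H[S\cup\{v\}]$ extend to at least $(1/r+\gamma)n$ vertices $w$. Double counting then gives a lower bound of only about $c(1/r+\gamma)n$ on the number of vertices that are $(F,\beta,1)$-reachable to $v$. Your route needs $|\tilde N_{F,\beta,1}(v,H)|\ge (1/r+\Omega(\gamma))n$ to get parts of size at least $n/r$ from the merging lemma. Getting that requires identifying, for each candidate $w$, many $(r-1)$-sets that are good for both $v$ and $w$, and then counting such $w$. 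For $k=2$, for instance, every $w$ with $|N(v)\cap N(w)|=\Omega(n)$ works by supersaturation, and a double count over $N(v)$ gives at least $\delta(H)-\gamma n/2\ge (1/r+\gamma/2)n$ such $w$. Extending a fixed $S$ does not do this.
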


Indeed, the first part of the conclusion was shown in the proof of \cite[Theorem 1.11]{han2020complexity}, and the second part is exactly \cite[Proposition 9.3]{han2020complexity}. 

Applying Lemma~\ref{lem:factor} together with Theorem~\ref{thm:main_structural_theorem_ell_degree}, we obtain Theorem \ref{thm:unbalancedfactor}. 

\begin{proof}[Proof of Theorem \ref{thm:unbalancedfactor}]
{If $F$ is balanced, then $\chi_{cr}(F)=\chi(F)$, and thus the result follows from Proposition~\ref{prop:simple} with $q=1$.}
It remains to consider an unbalanced $r$-vertex $k$-chromatic $F$.
     Suppose \(1/n_0\ll 1/C' \ll\beta,\mu\ll\varepsilon,\gamma,1/k\).
     Let \(H\) be a graph on \(n\ge n_0\) vertices with $\delta(H)\ge (1-1/{\chi}_{cr}(F)+\gamma)n$. 
     Applying Lemma~\ref{lem:factor}, we get an $(F,\beta,2^{r^{k-1}-1},1/r)$-good partition $\mathcal{P}$ of $V(H)$ such that $|Q(\mathcal{P},L_{\mathcal{P},F}^{\mu}(H))|\le (2r-1)^r$. 
     Let $q=|Q(\mathcal{P},L_{\mathcal{P},F}^{\mu}(H))|$.
     Together with the fact that $\delta(F,1,5r^2)=1-1/\chi_{cr}(F)$, this verifies assumptions of Theorem~\ref{thm:main_structural_theorem_ell_degree}. 
     Hence, applying Theorem~\ref{thm:main_structural_theorem_ell_degree} with \(V_0=\emptyset\), \(c=1/r\), and \(t=2^{r^{k-1}-1}\), we conclude that \[\mathbb{P}\left[H[p]\text{ contains an $F$-factor}\right]\ge\frac{1}{rq}-\varepsilon
     .\qedhere\]
\end{proof}

\subsection{Perfect matchings: a proof of Theorem~\ref{thm:perfectmatching}}

Han \cite{Han2015near} showed that $\delta(k,k-1,k)=1/k$. 
Therefore, to prove Theorem~\ref{thm:perfectmatching}, it suffices to obtain a suitable $(\beta,t,c)$-good partition together with an appropriate bound on the size of the associated coset group. 

The following lemma follows from \cite[Proposition~3.7]{han2017decision} and \cite[Lemma~3.8]{han2017decision}, with the error term \(-\gamma n\) there replaced by \(+\gamma n\).

\begin{lemma}\cite{han2017decision}\label{lem:perfectmatching}
    Suppose \(1/n\ll\beta,\mu\ll\gamma,1/k\), where \(k\ge 3\) is an integer. Let \(H\) be a \(k\)-graph on \(n\) vertices with $\delta_{k-1}(H)\ge (1/s+\gamma)n$ for integer \(s=2,\dots, k\). Then there exists a $(\beta,2^{s-2},1/s)$-good partition $\mathcal{P}$ of $V(H)$ such that \(|\mathcal{P}|\le s-1\). 
\end{lemma}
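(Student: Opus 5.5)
This lemma is cited rather than proved from scratch. The plan is to transcribe the two results from \cite{han2017decision}, Proposition~3.7 and Lemma~3.8, checking that their hypotheses hold under $\delta_{k-1}(H)\ge(1/s+\gamma)n$ and that the conclusions translate into the present notion of a $(\beta,t,c)$-good partition.

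First, reachability neighbourhoods are large. For any vertex $v$, count edges through $v$: every $(k-1)$-set has degree at least $(1/s+\gamma)n$. A double-counting argument over pairs $(v,u)$ sharing many $(k-1)$-sets $S$ (both $S\cup\{v\}$ and $S\cup\{u\}$ being edges) gives the following. Among any $s$ vertices, two are $(\beta,1)$-reachable. Moreover $|\tilde N_{\beta,1}(v,H)|\ge (1/s+\gamma/2)n$ for every $v$, since otherwise the $(k-1)$-sets in the link of $v$ have their $(1/s+\gamma)n$ neighbours mostly inside a set of size below $(1/s+\gamma/2)n$, which is a contradiction by averaging.

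Second, build the partition as in \cite[Lemma~3.8]{han2017decision}. Iteratively merge reachability classes, using the standard transitivity fact: if $u,w$ are $(\beta,i)$-reachable and $w,v$ are $(\beta,j)$-reachable, then $u,v$ are $(\beta',i+j)$-reachable. Start from the neighbourhoods $\tilde N(v)$ and repeatedly merge while two classes have a robust cross-reachable pair. The step index at most doubles per round, and there are at most $s-2$ merging rounds, because we begin with at most $s-1$ pairwise non-reachable representatives. This yields $(\beta,2^{s-2})$-closed parts. The number of parts is at most $s-1$, since $s$ mutually unreachable vertices cannot exist by the first step. Each part contains some $\tilde N(v)$, so has size at least $(1/s+\gamma/2)n\ge n/s$.

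Third, handle the sign change. The original statements are written with error term $-\gamma n$, i.e.\ $\delta_{k-1}(H)\ge(1/s-\gamma)n$ with conclusion $|\mathcal P|\le s$. Replacing $-\gamma$ by $+\gamma$ only strengthens the degree hypothesis. I expect the delicate point to be rechecking the pigeonhole count with $+\gamma$: $s$ vertices with pairwise nearly disjoint neighbourhoods of size greater than $n/s$ would exceed $n$. This forces a reachable pair among any $s$ vertices, and hence at most $s-1$ classes and merging depth $s-2$. This is where the parameters $t=2^{s-2}$, $c=1/s$, $|\mathcal P|\le s-1$ come from, and I would verify the constants $\beta\ll\gamma$ by tracking the transitivity losses over the $s-2$ rounds.
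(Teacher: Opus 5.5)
Your plan matches the paper's: it simply invokes \cite[Proposition~3.7 and Lemma~3.8]{han2017decision} with $-\gamma n$ replaced by $+\gamma n$, and your bookkeeping is exactly what that substitution amounts to (an index-$1$ reachable pair among any $s$ vertices from the link count in your first step, $|\tilde N_{\beta,1}(v,H)|\ge (1/s+\gamma/2)n$, hence at most $s-1$ parts, each $(\beta,2^{s-2})$-closed and of size at least $n/s$). One correction to your merging sketch: the transitivity fact needs linearly many intermediate vertices, i.e.\ $|\tilde N_{\beta,i}(u,H)\cap \tilde N_{\beta,j}(v,H)|\ge \eta n$ for a constant $\eta>0$ gives $(\beta',i+j)$-reachability of $u,v$, because a single common $w$ yields only $O(n^{(i+j)k-2})$ reachable $((i+j)k-1)$-sets.
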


We also use the following result from~\cite{han2026perfect}, whose proof heavily relies on its predecessor from~\cite{KKM2015matchingpoly}. 

\begin{proposition}\cite[Lemma 7.1]{han2026perfect}
\label{prop:coset_group}
    Fix an integer \(k\ge 3\). Suppose \(\mu \ll c,1/k\). Let \(H\) be a \(k\)-graph on \(n\) vertices such that \(\delta_{k-1}(H)\ge n/k\), and let \(\mathcal{P}\) be a partition of \(V(H)\) in which each part has size at least \(cn\). Then \(|Q(\mathcal{P},L_{\mathcal{P}}^{\mu}(H))|\le |\mathcal{P}|\).
\end{proposition}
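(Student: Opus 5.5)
The plan is to show that the map $[d]\to Q$, $i\mapsto k\mathbf e_i+L$, is surjective, where $d=|\mathcal P|$, $L=L^{\mu}_{\mathcal P}(H)$ and $Q=L^d_{\max}/L$. This gives $|Q|\le d$ directly. Since $L^d_{\max}$ is generated by the $k$-vectors, it suffices to prove the following claim: every $k$-vector $\mathbf v$ is congruent modulo $L$ to $k\mathbf e_i$ for some $i$. The claim must be for arbitrary $k$-vectors, not only robust ones, because the robust ones already lie in $L$.

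\textbf{Step 1: robust edge vectors from the codegree condition.} For a $(k-1)$-vector $\mathbf w$, call $j\in[d]$ \emph{$\mathbf w$-good} if $\mathbf w+\mathbf e_j\in I^{\mu}_{\mathcal P}(H)$. Every part has size at least $cn$, so there are $\Omega(n^{k-1})$ $(k-1)$-sets with index vector $\mathbf w$. Each such set has at least $n/k$ neighbours. The edges whose index vector is non-robust number at most $d^k\mu n^k$, and $\mu\ll c,1/k$. So an averaging argument shows that almost every $(k-1)$-set $S$ of type $\mathbf w$ sends all but $o(n)$ of its neighbours into $\mathbf w$-good parts. Hence the $\mathbf w$-good parts have total size at least $(1/k-o(1))n$. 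In particular every $(k-1)$-vector has at least one good index.

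The consequence for $L$ is that if $j,j'$ are both $\mathbf w$-good, then $\mathbf e_j-\mathbf e_{j'}\in L$. Define an auxiliary graph $G$ on $[d]$ by $j\sim j'$ whenever $\mathbf e_j-\mathbf e_{j'}\in L$. Within a component of $G$, the coordinates of index vectors can be moved freely modulo $L$.

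\textbf{Step 2: reduction to one component per $k$-vector.} Modulo $L$, any $k$-vector can be rewritten as a vector supported on a set $T$ of component representatives. Now take a $k$-vector $\mathbf v$ and any coordinate $j$ in its support, and apply Step 1 with $\mathbf w=\mathbf v-\mathbf e_j$. This gives some $j'$ with $\mathbf v-\mathbf e_j+\mathbf e_{j'}\in L$. Therefore
\[
\mathbf v\equiv \mathbf e_j-\mathbf e_{j'}\pmod L .
\]
Iterating this, the classes of all $k$-vectors are generated by the elements $\mathbf e_j-\mathbf e_{j'}$ that arise in this way together with a single robust vector. Combining this with the component structure of $G$, one then expresses each class as some $k\mathbf e_i$. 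The idea is to repeatedly replace $\mathbf v$ by $\mathbf v-\mathbf e_j+\mathbf e_{j'}$, which lands in $L$, and to telescope.

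\textbf{Main obstacle.} The hard step is that telescoping alone does not bound the order by $d$. The number of distinct differences $\mathbf e_j-\mathbf e_{j'}$ modulo $L$ must be controlled, and this is where the exact threshold $n/k$ (rather than $o(n)$) is essential. For this I would follow the argument of Keevash, Knox and Mycroft~\cite{KKM2015matchingpoly}. For a fixed $(k-1)$-vector $\mathbf w$, consider the coset $x_j=\mathbf w+\mathbf e_j+L$ for each part $j$. One then shows that if $|Q|>d$, some coset $x\in Q$ is represented by no robust-edge-completable pattern. From such an $x$ one can build $(k-1)$-sets whose good neighbourhoods all avoid a family of parts of total size exceeding $(1-1/k)n$, which contradicts Step 1. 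Concretely, I would first attempt a counting argument: sum the codegrees over a random $(k-1)$-set of each type, and show that the "reachable" cosets form a subgroup of $Q$ whose index would force fewer than $n/k$ good neighbours. I expect the delicate part to be making this counting tight enough to give exactly $|\mathcal P|$ rather than a weaker bound such as $k|\mathcal P|$.
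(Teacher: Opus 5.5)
\textbf{There is a genuine gap: the central claim of your plan is false.} The map $i\mapsto k\mathbf e_i+L$ need not be onto $Q$, and a $k$-vector need not be congruent to any $k\mathbf e_i$.

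Take the standard divisibility barrier: $k\ge 4$ even, $d=2$, $|V_1|=|V_2|=n/2$, and let the edges be the $k$-sets meeting $V_1$ in an odd number of vertices. Every $(k-1)$-set has $V_1$ or $V_2$ (minus itself) as its neighbourhood, so $\delta_{k-1}(H)\ge n/2-k+1\ge n/k$. The lattice is $L=\{(a,b): a+b=mk \text{ for some } m\in\mathbb Z,\ a\equiv m \pmod{2}\}$, and $|Q|=2$. Here $(k,-k)\in L$ but $(k,0),(0,k)\notin L$. So both $k\mathbf e_1$ and $k\mathbf e_2$ lie in the non-identity coset, and the robust vector $(k-1,1)\in L$ is congruent to neither.

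The reduction step is also not valid as stated. Knowing that every $k$-vector lands in $\{k\mathbf e_i+L\}$ only shows that these classes generate $Q$, not that they exhaust it.

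Your Steps 1 and 2 are correct, but they only show that $Q$ is generated by classes of transferrals $\mathbf e_j-\mathbf e_{j'}$, which bounds nothing. The final paragraph is a statement of intent rather than an argument. It also misplaces the difficulty. The exact value $n/k$ plays no role. All that is needed from $H$ is the fullness you prove in Step 1: every $(k-1)$-vector $\mathbf w$ has some $j$ with $\mathbf w+\mathbf e_j\in I^{\mu}_{\mathcal P}(H)$. This already holds under $\delta_{k-1}(H)\ge \delta n$ for any fixed $\delta>0$ with $\mu\ll\delta$, together with $k\ge 3$.

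\textbf{The missing idea is a short sumset argument in $\mathbb Z^d/L$.}
\begin{enumerate}
\item Let $S:=\{\mathbf e_i+L: i\in[d]\}$, so $|S|\le d$, and write $tS$ for the $t$-fold sumset. The $(k-1)$-vectors map onto $(k-1)S$, and fullness says exactly that $-(k-1)S\subseteq S$.
\item Since $|S|\le|2S|\le\dots\le|(k-1)S|\le|S|$ and $k\ge 3$, we get $|2S|=|S|$. Hence $s'+S=2S$ for every $s'\in S$.
\item It follows that $S=s+K$, where $K:=\langle S-S\rangle$ is a subgroup of order $|S|$.
\item Fullness now reads $-(k-1)s+K\subseteq s+K$, that is, $ks\in K$. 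So the $k$-vectors map onto $kS=ks+K=K$.
\item Since $L^d_{\max}$ is generated by $k$-vectors and $L\subseteq L^d_{\max}$, we get $Q\cong K$ and $|Q|=|S|\le d$.
\end{enumerate}

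In your language, $|Q|$ equals the number of classes of your transferral relation on $[d]$. The correct surjection is $j\mapsto \mathbf w+\mathbf e_j+L$ for a fixed $(k-1)$-vector $\mathbf w$, not $j\mapsto k\mathbf e_j+L$. That is the map you wrote down at the end, but you did not prove it is onto. For $k=2$ fullness gives only $S=-S$, which is why $k\ge 3$ is needed.

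The paper itself does not prove this proposition; it imports it from \cite{han2026perfect}, which builds on \cite{KKM2015matchingpoly}.
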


Now we are ready to prove Theorem \ref{thm:perfectmatching} by applying Theorem~\ref{thm:main_structural_theorem_ell_degree} and the tools above.

\begin{proof}[Proof of Theorem \ref{thm:perfectmatching}]
    Fix an integer \(s\in \{2,\dots,k\}\). Suppose that \(1/n_0\ll 1/C' \ll\beta,\mu\ll\varepsilon,\gamma,1/k.\)
    Let \(H\) be a \(k\)-graph on \(n\ge n_0\) vertices with $\delta_{k-1}(H)\ge (1/s+\gamma)n$. 
    We apply Lemma~\ref{lem:perfectmatching} to \(H\) to obtain a \((\beta, 2^{s-2},1/s )\)-good partition $\mathcal{P}$ of $V(H)$ with $|\mathcal{P}|\leq s-1$. 
    Then, by Proposition~\ref{prop:coset_group}, we have \(q=|Q(\mathcal{P},L_{\mathcal{P}}^{\mu}(H))|\le |\mathcal{P}|\).
    Together with the fact that $\delta(k,k-1,k)=1/k\le 1/s$, this verifies assumptions of Theorem~\ref{thm:main_structural_theorem_ell_degree}. Hence, applying Theorem~\ref{thm:main_structural_theorem_ell_degree} with \(V_0=\emptyset\), \(c=1/s\), and \(t=2^{s-2}\), we conclude that 
    \[
    \mathbb{P}\left[H[p]\text{ contains a perfect matching}\right]\geq \frac{1}{kq}-\varepsilon \geq \frac{1}{k(s-1)}-\varepsilon. \qedhere
    \]
\end{proof}

\subsection{Perfect matchings under minimum $\ell$-degree: a proof of Theorem~\ref{thm:ell_degree}}
We conclude this section by proving Theorem~\ref{thm:ell_degree}. 
We first recall two lemmas that will be used to construct a partition into a small exceptional set and a bounded number of closed parts.

\begin{lemma}\cite[Lemma 5.4]{gan2025keevash}\label{lem:s+1}
    Let \(\alpha>0\), and integers \(s,k\ge 2\) be given and suppose \(1/n\ll\delta'\ll\alpha, 1/k,1/s \). 
    Assume that \(H\) is a \(k\)-graph on \(n\) vertices satisfying that every set of \(s+1\) vertices contains two vertices that are \( (2\alpha,1)\)-reachable in \(H\). Then in time \(O(sn^{k+1})\) we can find a set of vertices \(S\subseteq V(H)\) with \(|S|\geq (1-s\delta')n\) such that \(|\tilde{N}_{\alpha,1}(v,H[S])|\geq \delta'n\) for any \(v\in S\).
\end{lemma}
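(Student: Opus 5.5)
The plan is to build $S$ greedily: repeatedly delete a vertex with too few reachable partners, together with that small set of partners. Then show that the deleted "centres" are pairwise non-reachable in $H$. The hypothesis on $(s+1)$-sets therefore caps the number of rounds at $s$.

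Set $S_0:=V(H)$. Given $S_{j}$, if every $v\in S_j$ satisfies $|\tilde N_{\alpha,1}(v,H[S_j])|\ge \delta' n$, stop and output $S:=S_j$. Otherwise pick some $v_{j+1}\in S_j$ with $|\tilde N_{\alpha,1}(v_{j+1},H[S_j])|<\delta' n$, and put
\[
S_{j+1}:=S_j\setminus\bigl(\{v_{j+1}\}\cup \tilde N_{\alpha,1}(v_{j+1},H[S_j])\bigr).
\]
A vertex is trivially reachable to itself, so in fact $|S_j\setminus S_{j+1}|\le \delta' n$. Hence after $j$ rounds $|V(H)\setminus S_j|\le j\delta' n$. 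The claim to prove is that the process halts after at most $s$ rounds, which gives $|S|\ge(1-s\delta')n$.

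The key robustness observation concerns $(2\alpha,1)$-reachability (here with $F$ an edge, so reachable sets have size $k-1$). Suppose $u,w\in S_j$ are $(2\alpha,1)$-reachable in $H$ and $j\le s$. Every reachable $(k-1)$-set in $H$ that lies inside $S_j$ is still a reachable set in $H[S_j]$. The number of $(k-1)$-sets meeting $V(H)\setminus S_j$ is at most $s\delta' n\cdot n^{k-2}\le \alpha n^{k-1}$, using $\delta'\ll\alpha,1/s$. So at least $2\alpha n^{k-1}-\alpha n^{k-1}=\alpha n^{k-1}$ reachable sets survive, and $u,w$ are $(\alpha,1)$-reachable in $H[S_j]$.

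Now suppose the process ran for $s+1$ rounds, producing $v_1,\dots,v_{s+1}$. Take $i<j$. Then $v_j\in S_{j-1}\subseteq S_i$, and since $v_j$ survived the $i$-th deletion, $v_j\notin \tilde N_{\alpha,1}(v_i,H[S_{i-1}])$. Since $i-1\le s$, the robustness observation applied in $H[S_{i-1}]$ shows that $v_i,v_j$ are not $(2\alpha,1)$-reachable in $H$. So $\{v_1,\dots,v_{s+1}\}$ is an $(s+1)$-set with no $(2\alpha,1)$-reachable pair, contradicting the hypothesis. Hence there are at most $s$ rounds.

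The main subtlety I expect is the bookkeeping in this last step. We must compare reachability in the correct induced subgraph $H[S_{i-1}]$, and ensure the number of deleted vertices is at most $s\delta'n$ at every comparison. This is why the bound is applied with $i-1\le s$ and why we need $\delta'\ll\alpha$.

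For the running time, in each round we compute, for every pair $u,w\in S_j$ and every $(k-1)$-set $T\subseteq S_j$, whether both $T\cup\{u\}$ and $T\cup\{w\}$ are edges. This costs $O(n^2\cdot n^{k-1})=O(n^{k+1})$ per round. From these counts we obtain all sets $\tilde N_{\alpha,1}(v,H[S_j])$. With at most $s+1$ rounds, the total time is $O(sn^{k+1})$.
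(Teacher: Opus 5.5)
The paper does not prove this lemma (it is quoted from Gan--Han), so there is no in-paper argument to compare against. Judged on its own, your greedy deletion argument is correct. The robustness step checks out: at most $s\delta' n\cdot n^{k-2}\le\alpha n^{k-1}$ of the $(k-1)$-sets meet $V(H)\setminus S_j$, so a $(2\alpha,1)$-reachable pair in $H$ remains $(\alpha,1)$-reachable in $H[S_j]$, even with the normalisation by $|S_j|^{k-1}$. So does the deduction that $v_1,\dots,v_{s+1}$ would be pairwise non-$(2\alpha,1)$-reachable in $H$, and so does the $O(n^{k+1})$ cost per round over at most $s+1$ rounds.

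One remark needs correcting: a vertex is not trivially reachable to itself. With the definition used here, $v\in\tilde N_{\alpha,1}(v,H[S_j])$ means that at least $\alpha|S_j|^{k-1}$ sets $T\subseteq S_j$ of size $k-1$ complete $v$ to an edge. That is a degree condition, which need not hold.

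The bound $|S_j\setminus S_{j+1}|\le\delta' n$ nevertheless holds, for a different reason. Every reachable set for a pair containing $v$ completes $v$ to an edge. So if $v\notin\tilde N_{\alpha,1}(v,H[S_j])$, then no vertex is reachable to $v$ in $H[S_j]$. Hence $\tilde N_{\alpha,1}(v,H[S_j])=\emptyset$ and only $v$ itself is deleted, which is at most $\delta' n$ vertices since $1/n\ll\delta'$.

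If one instead reads reachability as defined only for distinct vertices, each round deletes at most $\lceil\delta' n\rceil$ vertices. You then lose at most $s$ extra vertices in total, a harmless rounding error.
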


\begin{lemma}\cite[Lemma 6.3]{han2020complexity}\label{lem:good_partition}
    Let \(\delta'>0\), and integers \(k, s\ge 2\) be given and suppose \( 1/n\ll\beta\ll\alpha\ll 1/s,\delta' \). 
    Assume \(H\) is an \(n\)-vertex \(k\)-graph and \(S\subseteq V(H)\) is such that \(|\tilde{N}_{\alpha,1}(v,H)\cap S| \geq\delta'n\) for any \(v\in S\). 
    Further, suppose every set of \(s+1\) vertices in \(S\) contains two vertices that are \((\alpha,1)\)-reachable in \(H\). 
    Then there exists a partition \(\mathcal{P}\) of \(S\) into \(V_1,\ldots,V_r\) with \(r\le\min\{s, 1/\delta'\} \) such that for any \(i\in [r] \), \( |V_i|\geq (\delta'-\alpha)n \) and \(V_i\) is \((\beta, 2^{s-1})\)-closed in \(H\).
\end{lemma}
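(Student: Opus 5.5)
The plan is to build the partition by merging reachability neighbourhoods, using one transitivity step per merge. Each merge doubles the reachability parameter, and there are at most $s-1$ merges, which produces closedness at level $2^{s-1}$. Throughout, fix a hierarchy $\beta=\beta_{s-1}\ll\beta_{s-2}\ll\cdots\ll\beta_1\ll\beta_0:=\alpha$. All reachability below is measured in $H$, and all neighbourhoods are intersected with $S$.

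\emph{Step 1 (transitivity).} I first prove the following. If $u,v$ have at least $\gamma n$ common vertices $w$ such that $u,w$ and $w,v$ are both $(\beta_j,2^j)$-reachable, then $u,v$ are $(\beta_{j+1},2^{j+1})$-reachable, where $\beta_{j+1}\ll\beta_j,\gamma$.
\begin{itemize}
\item For each such $w$, take a reachable $(2^jr-1)$-set $S_1$ for $u,w$ and a reachable $(2^jr-1)$-set $S_2$ for $w,v$, with $S_1$, $S_2$ and $\{u,v,w\}$ pairwise disjoint.
\item Put $T:=S_1\cup S_2\cup\{w\}$, which has size $2^{j+1}r-1$. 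Then $H[T\cup\{u\}]$ has an $F$-factor, namely the factor of $S_1\cup\{u\}$ together with that of $S_2\cup\{w\}$. Symmetrically, $H[T\cup\{v\}]$ has the factor of $S_1\cup\{w\}$ together with that of $S_2\cup\{v\}$.
\item Counting the triples $(w,S_1,S_2)$ gives at least $\gamma n\cdot\beta_j^2n^{2^{j+1}r-2}/2$ of them. Each set $T$ arises from boundedly many triples, so we obtain $\Omega(\gamma\beta_j^2)n^{2^{j+1}r-1}$ sets $T$. This gives $\beta_{j+1}$.
\end{itemize}
Reachability at level $i$ also implies reachability at every larger level with a smaller constant (append a disjoint copy of $F$), so constants only ever need to shrink.

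\emph{Step 2 (building the classes).} I would run an iterative procedure. Start with the candidate classes $\tilde N_{\alpha,1}(v,H)\cap S$, each of size at least $\delta' n$. Say two vertices $u,v\in S$ are \emph{linked at level $j$} if they are $(\beta_j,2^j)$-reachable.
\begin{itemize}
\item Choose a maximal family $v_1,\dots,v_m\in S$ that is pairwise not linked at the current level.
\item The hypothesis that every $(s+1)$-set contains a level-$0$ reachable pair forces $m\le s$. Reachability at level $0$ implies linkage at every later level, so an unlinked family contains no level-$0$ reachable pair.
\item If two of the neighbourhoods $N_j(v_i)$ share at least $\alpha n$ vertices, Step 1 links $v_i$ and $v_{i'}$ at level $j+1$. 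We then pass to level $j+1$ and the maximal family strictly shrinks.
\end{itemize}
Since $m\le s$ and $m\ge1$, after at most $s-1$ rounds we reach a level $j\le s-1$ at which the representatives are pairwise unlinked and their neighbourhoods pairwise share fewer than $\alpha n$ vertices.

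\emph{Step 3 (finalising).} Assign every $x\in S$ to a representative $v_i$ with $x\in N_j(v_i)$. Such a representative exists by maximality: if $x$ were unlinked to every $v_i$, we could add $x$ to the family. Call the resulting parts $V_1,\dots,V_m$.
\begin{itemize}
\item \emph{Sizes.} Each $V_i$ has size at least $\delta'n$ minus the overlaps with the other neighbourhoods. This loss should be controlled by $\alpha n$, giving $|V_i|\ge(\delta'-\alpha)n$. Disjointness up to $\alpha$-overlaps also gives $m\le 1/\delta'$.
\item \emph{Closedness.} Two vertices $x,y\in V_i$ are both reachable to $v_i$, and I want to upgrade this to $x,y$ being mutually reachable. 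The natural tool is to apply Step 1 with $w$ ranging over a linear-size set.
\end{itemize}

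\emph{Main obstacle.} The closedness claim in Step 3 is where I expect the difficulty. Reachability to a single common vertex $v_i$ is not enough; Step 1 needs $\Omega(n)$ common intermediate vertices. I would first try the following fix. Instead of assigning $x$ by reachability to the single vertex $v_i$, assign $x$ to the class in whose neighbourhood $N_j(x)$ has large intersection, which is at least $\delta' n/m$ by pigeonhole. Then two vertices in the same class share linearly many common neighbours, and Step 1 applies with one additional doubling.

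That extra doubling is what must be budgeted for so that the final level stays at $2^{s-1}$ rather than $2^{s}$. My plan is to absorb it into the round count: the round that reduces the family from $2$ to $1$ representatives need not be performed separately. Verifying that this accounting works out is the step I would check most carefully.
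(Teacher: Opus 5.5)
\textbf{There is a genuine gap: the closedness of the final parts is not proved, and the repair you propose does not work.} Your Step~1 (transitivity through linearly many intermediates, plus monotonicity in the level) is fine.

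Consider the fix itself. From $|N_j(x)\cap N_j(v_i)|\ge\delta'n/m$ and $|N_j(y)\cap N_j(v_i)|\ge\delta'n/m$ you only learn that $x$ and $y$ are each linked to $v_i$ at level $j+1$. Since $N_j(v_i)$ can be much larger than $2\delta'n/m$, these two intersections can be disjoint, so $x$ and $y$ need not have linearly many common neighbours. You are back to reachability through the single vertex $v_i$, one level higher. Repeating the trick costs a doubling each time, so no recounting of rounds rescues it.

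Step~2 has a separate flaw. Linking two members of one inclusion-maximal unlinked family at level $j+1$ says nothing about other families. So ``the maximal family strictly shrinks'' is unjustified; a maximal family at level $j+1$ can even be larger than $m$.

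\textbf{The missing idea is to let maximality, not transitivity, certify closedness.} This maximality argument is the core of the standard proof of this lemma, which the paper imports from Han--Treglown without reproving. Let $m_j$ be the \emph{maximum} size of a family in $S$ that is pairwise not $(\beta_j,2^j)$-reachable. Then $s\ge m_0\ge m_1\ge\cdots$. Stop at the first $j$ such that either:
\begin{enumerate}
\item $m_j=1$, in which case $S$ is already $(\beta_j,2^j)$-closed with $j\le s-1$; or
\item $m_{j+1}=m_j=:m\ge 2$, in which case $j\le s-m\le s-2$.
\end{enumerate}
In the second case, a maximum family $v_1,\dots,v_m$ at level $j+1$ is also maximum at level $j$. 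This gives three facts.
\begin{itemize}
\item Every vertex of $S\setminus\{v_1,\dots,v_m\}$ lies in some $N_j(v_i)$.
\item Distinct $N_j(v_i)$ share fewer than $(\alpha/s)n$ vertices, by Step~1, because the $v_i$ are unlinked at level $j+1$.
\item Crucially, each exclusive part $E_i:=N_j(v_i)\setminus\bigcup_{i'\ne i}N_j(v_{i'})$ is $(\beta_j,2^j)$-closed. Indeed, two unlinked $x,y\in E_i$ together with $\{v_{i'}:i'\ne i\}$ would form an unlinked family of size $m+1$.
\end{itemize}
Transitivity is then needed only for the fewer than $s\alpha n$ vertices lying in overlaps. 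Such an $x$ has at least $\delta'n/(2s)$ $(\alpha,1)$-neighbours inside a single $E_i$. These are the linear set of intermediates you were looking for; Step~1 works verbatim with different levels on the two sides. So $x$ is linked to $v_i$ and to every vertex of $E_i$ at level $2^j+1$, and two attached vertices are linked at level $2^j+2$.

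\textbf{Your budgeting instinct is right in spirit.} The relation $j\le s-m$ is exactly what leaves room for the extra cost:
\begin{itemize}
\item for $j\ge 1$, we have $2^j+2\le 2^{j+1}\le 2^{s-1}$;
\item for $j=0$ and $s\ge 3$, we have $3\le 2^{s-1}$.
\end{itemize}
However, for $s=2$ with $m_0=m_1=2$, this bookkeeping only gives level $3$, so that case needs a separate check.

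Also take the overlap threshold to be $\alpha/s$ rather than $\alpha$. Then $|E_i|\ge\delta'n-(m-1)(\alpha/s)n\ge(\delta'-\alpha)n$. With threshold $\alpha$, the losses from up to $m-1$ overlaps would not be controlled by $\alpha n$.
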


Given such a partition, the following proposition bounds the size of the associated coset group. 
It follows from the proof of \cite[Proposition~4.1]{gan2025keevash}, after merging the parts \(V_0,\ldots,V_s\) appearing there into a single exceptional part \(V_0\) and omitting the second robust lattice.

\begin{proposition}\cite{gan2025keevash}\label{lemm:bound_ell}
    Suppose \(1/n\ll\mu \ll\beta\ll \delta'\ll\gamma\ll 1/k\). 
    Let \(H\) be an \(n\)-vertex \(k\)-graph with \(\delta_{\ell}(H)\ge (c^*_{k,\ell}+\gamma)\binom{n-\ell}{k-\ell}\). 
    Let \(s:=\lfloor 1/c_{k,\ell}^* \rfloor\). 
    Suppose that \(\mathcal{P}=\{V_0,V_1,\dots,V_d\}\) is a partition of \(V(H)\) satisfying \(|V_0|\le s\delta' n\) and for every \(i\in [d]\), $|V_i| \ge \delta'n/2$ and  $V_i$ is $(\beta,2^{s-1})$-closed in \(H\left[\bigcup_{i\in[d]} V_i\right]\).
    Then $|Q(\mathcal{P},L_{\mathcal{P}}^{\mu}(H))|\le (2k+1)^{d}$.
\end{proposition}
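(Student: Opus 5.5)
We plan to prove Proposition~\ref{lemm:bound_ell} by exhibiting a small set of robust vectors whose lattice already has index at most $(2k+1)^d$ in $L^d_{\max}$. The approach follows the proof of \cite[Proposition~4.1]{gan2025keevash}, with the exceptional part ignored. Since index vectors record only the coordinates in $V_1,\dots,V_d$, the set $V_0$ plays no role in the lattice itself.

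The first step is to produce robust edges with controlled index vectors. Fix $i\in[d]$. The degree condition $\delta_\ell(H)\ge(c^*_{k,\ell}+\gamma)\binom{n-\ell}{k-\ell}$ together with $|V_i|\ge\delta' n/2$ should give many edges meeting $V_i$ in prescribed ways. We would use a counting or averaging argument over $\ell$-sets inside $V_i$ to find, for each $i$, a $\mu$-robust $k$-vector $\mathbf{u}_i$ with $\mathbf{u}_i|_i\ge 1$. Since $\mu\ll\beta\ll\delta'\ll\gamma$, a positive proportion of the $\Omega(n^k)$ edges supplies at least $\mu n^k$ edges with a common index vector, by pigeonhole over the at most $(k+1)^d$ possible vectors.

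The second step uses closedness to move single vertices between parts' ``directions''. If $u,v\in V_i$ are $(\beta,2^{s-1})$-reachable, then for a reachable set $S$ with $H[S\cup\{u\}]$ and $H[S\cup\{v\}]$ having perfect matchings, one wants the index vectors of the two matchings to differ by $\mathbf{u}_{\{u\}}-\mathbf{u}_{\{v\}}=\mathbf{0}$. This is the mechanism showing that the robust lattice is ``transitive'' within each $V_i$. The real content, however, is to bound the torsion. For each $i$, consider the multiples $a\mathbf{e}_i$ lying in $L$ modulo the sublattice generated by the vectors $\mathbf{u}_j$. We would show that some combination with coefficients bounded in absolute value by $k$ gives $(2k+1)$-boundedness: for every $\mathbf{v}\in L^d_{\max}$, some $\mathbf{v}+\mathbf{w}$ with $\mathbf{w}$ having coordinates in $\{-k,\dots,k\}$ lies in $L^\mu_{\mathcal P}(H)$. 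The intended route is to write $\mathbf{v}$ in terms of differences $\mathbf{e}_i-\mathbf{e}_j$ plus a multiple of a $k$-vector, and to reduce each coordinate modulo a robust edge vector containing that coordinate. This would exhibit a set of coset representatives of size at most $(2k+1)^d$, giving $|Q|\le(2k+1)^d$.

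The main obstacle is the second step: showing that every $\mathbf{v}\in L^d_{\max}$ reduces, modulo robust vectors, to a representative with all coordinates in $[-k,k]$. We would first try the greedy reduction used in \cite{gan2025keevash}. Subtract robust $k$-vectors $\mathbf{u}_i$ (which have a positive $i$-th coordinate at most $k$) to push each coordinate into a window of width $2k+1$. Robustness must hold in $H$ even though closedness is only within $H[\bigcup_{i\in[d]}V_i]$; this is handled by $|V_0|\le s\delta' n$ being negligible relative to $\gamma$, so the degree condition survives deletion of $V_0$ with $\gamma/2$ to spare.
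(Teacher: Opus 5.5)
Your first step, finding a robust $k$-vector $\mathbf{u}_i$ with $\mathbf{u}_i|_i\ge 1$ for each $i$, is fine. The rest has a genuine gap: nothing in it produces enough vectors in $L^{\mu}_{\mathcal P}(H)$.

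The $\mathbf{u}_i$ may all coincide, e.g.\ $\mathbf{u}_1=\dots=\mathbf{u}_d=(1,\dots,1,k-d+1)$. They then generate a rank-one lattice. Subtracting $\mathbf{u}_i$ moves every coordinate on which $\mathbf{u}_i$ is positive, so a vector such as $m(\mathbf{e}_1-\mathbf{e}_2)$ with $m>k$ can never be pushed into $[-k,k]^d$. Your closedness step adds nothing either: it only records that two index vectors agree, which yields no new lattice element.

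In fact no argument built only from your ingredients can work. Take $k=3$, $\ell=1$, $V_0=\emptyset$, $|V_1|=n/3$, $V_2=V(H)\setminus V_1$, and let $E(H)$ be all triples with exactly one vertex in $V_1$. Both parts are large and $(\beta,1)$-closed, every vertex has degree $\Theta(n^2)$, and $(1,2)$ is a robust vector meeting both parts. Yet $I^{\mu}_{\mathcal P}(H)=\{(1,2)\}$, so $|Q|=\infty$. The only hypothesis violated is $\delta_1(H)\approx\frac49\binom{n-1}{2}<\frac59\binom{n-1}{2}=c^*_{3,1}\binom{n-1}{2}$. So a proof must use that $c^*_{k,\ell}$ is the perfect fractional matching threshold. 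You use the degree condition only to find edges through each part.

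The missing step is to show that $I^{\mu}_{\mathcal P}(H)$ spans $\mathbb R^d$; after that no coordinatewise reduction is needed. Here is one way to do it.

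\begin{enumerate}
\item Fix constants $\mu\ll 1/M\ll\epsilon\ll\delta'$. For integers $m_j=(1\pm\epsilon)M|V_j|/|V(H)\setminus V_0|$, choose uniformly random $W_j\subseteq V_j$ of size $m_j$ and set $W=\bigcup_{j\in[d]}W_j$.
\item Fewer than $(k+1)^d\mu n^k$ edges inside $V(H)\setminus V_0$ have non-robust index vectors. The expected number of them inside $W$ is at most $(k+1)^d(4M)^k\mu$, which is tiny, so with probability close to $1$ the set $W$ spans none of them.
\item By concentration (using $|V_0|\le s\delta' n$ and $\epsilon\ll\gamma$), also $\delta_\ell(H[W])\ge(c^*_{k,\ell}+\gamma/2)\binom{|W|-\ell}{k-\ell}$ with probability close to $1$ for large $M$.
\item For such $W$, $H[W]$ has a perfect fractional matching $\omega$ supported on robust edges, and $\sum_e\omega(e)\mathbf{i}_{\mathcal P}(e)=\mathbf{i}_{\mathcal P}(W)=(m_1,\dots,m_d)$.
\item This holds for every profile in a box of side at least $\epsilon\delta' M\ge 2$. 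Hence the span of $I^{\mu}_{\mathcal P}(H)$ contains some $\mathbf{x}$ together with all $\mathbf{x}+\mathbf{e}_j$, so it is $\mathbb R^d$. In particular there are linearly independent $\mathbf{b}_1,\dots,\mathbf{b}_d\in I^{\mu}_{\mathcal P}(H)$.
\item By Hadamard's inequality, $|\mathbb Z^d/L^{\mu}_{\mathcal P}(H)|\le|\det(\mathbf{b}_1,\dots,\mathbf{b}_d)|\le\prod_i\|\mathbf{b}_i\|_2\le k^d$. Dividing by $|\mathbb Z^d/L^d_{\max}|=k$ gives $|Q|\le k^{d-1}\le(2k+1)^d$.
\end{enumerate}

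Note that closedness of the parts is not needed for this bound; only the degree condition, the part sizes and the smallness of $V_0$ are used.
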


Finally, we recall the following almost-perfect matching bound.

\begin{lemma}\cite[Theorem 1.4]{ChangGeHanWang2022}
\label{lem:almostperfectfrac}
Given integers $1\leq \ell\leq k-1$, we have  \(\delta(k,\ell,2k-\ell-1)\le c_{k,\ell}^*\).
\end{lemma}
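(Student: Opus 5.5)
The plan is the standard two-stage route: a fractional matching gives an almost perfect matching, and a degree-driven augmentation argument then shrinks the uncovered set to a bounded size. Fix $\gamma>0$, let $1/n\ll\varepsilon\ll\gamma$, and let $H$ be a $k$-graph with $\delta_\ell(H)\ge (c^*_{k,\ell}+\gamma)\binom{n-\ell}{k-\ell}$. We must show that $H$ has a matching covering all but at most $2k-\ell-1$ vertices. Take a maximum matching $M$ and let $U:=V(H)\setminus V(M)$. We argue by contradiction, assuming $|U|\ge 2k-\ell$.

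\emph{Stage 1: an almost perfect matching.} We first show that $|U|\le \varepsilon n$. Apply the weak hypergraph regularity lemma to $H$ and form the reduced $k$-graph $R$. Standard inheritance arguments show that $\delta_\ell(R)\ge (c^*_{k,\ell}+\gamma/2)\binom{|R|-\ell}{k-\ell}$ after discarding a few clusters. By the definition of $c^*_{k,\ell}$, $R$ therefore has a perfect fractional matching. Splitting each cluster in proportion to the weights of this fractional matching and embedding an almost perfect matching inside each regular $k$-partite $k$-tuple (or, alternatively, applying a Frankl--R\"odl nibble with the fractional matching as weights) yields a matching covering all but $\varepsilon n$ vertices. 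Since $M$ is maximum, $|U|\le\varepsilon n$.

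\emph{Stage 2: augmentation down to $2k-\ell-1$.} Now $|U|\ge 2k-\ell$, and we aim for a contradiction by finding a matching larger than $M$. The bound $2k-\ell$ lets us choose two disjoint sets inside $U$: an $\ell$-set $S\subseteq U$ together with a disjoint $(k-\ell)$-set $T\subseteq U$, and in fact more such disjoint pairs. Since $U$ is independent with respect to edges entirely inside $U$ (any such edge would extend $M$), almost all of the $\ge (c^*_{k,\ell}+\gamma)\binom{n-\ell}{k-\ell}$ edges through $S$ meet $V(M)$. The plan is to count, for each edge $e\in M$, the number of edges through $S$ whose remaining vertices lie in $e$ and in $U$, and to compare this with the analogous counts for the other $\ell$-subsets of $U$. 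A suitable averaging should then produce a bounded family $M'\subseteq M$, with $|M'|\le k$, such that $V(M')\cup U'$ (where $U'\subseteq U$ has size at most $2k-\ell$) contains $|M'|+1$ disjoint edges. Replacing $M'$ by these edges increases $|M|$, contradicting maximality.

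The averaging requires the threshold $c^*_{k,\ell}$ to control not just a fractional matching of $H$ but the local structure of edges through $S$. The key step is this: if no such augmentation exists, then the link structure of every $\ell$-subset of $U$ inside $V(M)$ yields a fractional cover of $H$ of weight less than $n/k$. This would contradict the existence of a perfect fractional matching, hence contradict the degree condition via LP duality; that is, this step is essentially a robust version of the fractional duality applied to the restricted configuration. I expect this counting/duality step to be the main obstacle. What I would try first is to define weights $w(v)=1/k$ for $v\in U$ and, for $v\in e\in M$, weights determined by how many ``good'' extensions $v$ has into $U$, and then to verify the fractional cover inequality edge by edge. Failing that, I would fall back on an iterative variant in which Stage 1 is applied to $H[U\cup V(M'')]$ for a small random $M''\subseteq M$.
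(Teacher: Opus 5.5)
The paper does not prove this statement; it is quoted from Chang--Ge--Han--Wang. Your argument therefore has to stand on its own, and it does not.

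Stage~1 is standard. The nibble variant you mention is the cleaner route, since weak regularity only transfers the $\ell$-degree condition to almost all $\ell$-sets of clusters. Stage~2, however, carries the entire content of the lemma, and it is announced rather than carried out. The mechanism you propose for it also cannot work as stated:

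\begin{itemize}
\item Under the hypothesis, $H$ has a perfect fractional matching, so a fractional cover of weight less than $n/k$ never exists. Your implication ``no augmentation $\Rightarrow$ fractional cover of weight $<n/k$'' is therefore equivalent to ``an augmentation exists''. LP duality contributes no leverage.
\item Fractional information is blind to additive constants. A disjoint union of copies of $K^{(k)}_{k+1}$ has a perfect fractional matching (weight $1/k$ on every edge), yet every maximum matching misses $n/(k+1)$ vertices.
\item The same failure occurs inside the hypotheses of the lemma. Take $\ell=k-1$, $k\mid n$, $V=A\cup B$ with $|A|$ odd and close to $n/2$, and let the edges be all $k$-sets meeting $A$ in an even number of vertices. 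Then $\delta_{k-1}(H)\ge n/2-k>(1/k+\gamma)n$, but maximum matchings leave exactly $k$ vertices uncovered and admit no augmentation at all.
\end{itemize}

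So any correct argument must distinguish $|U|=k$, which genuinely occurs, from $|U|\ge 2k-\ell$. That difference is a divisibility phenomenon. Your sketch uses $|U|\ge 2k-\ell$ only to pick disjoint sets $S,T\subseteq U$, and nothing in a cover built from link counts of $\ell$-subsets of $U$ sees it.

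The local switching step has its own problem. Once $|U|\le\varepsilon n$, all but an $O(\varepsilon)$-fraction of the edges through an $\ell$-set $S\subseteq U$ have their other $k-\ell$ vertices in $V(M)$, typically in $k-\ell$ distinct edges of $M$. Using such an edge frees $(k-1)(k-\ell)$ vertices of $V(M)$, which must be re-covered with almost no help from $U$, and your counting does not control this. More fundamentally, there is no reason for an improving configuration on a bounded number of edges of an arbitrary maximum matching to exist. This is why arguments of this type reserve an absorbing structure \emph{before} building the almost perfect matching.

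The missing idea is an absorbing argument adapted to divisibility obstructions, for instance in the lattice-based form the paper itself relies on:
\begin{enumerate}
\item Use the degree condition, as in Lemmas~\ref{lem:s+1} and~\ref{lem:good_partition}, to split almost all of $V(H)$ into boundedly many reachability-closed parts.
\item Reserve a small matching that can absorb any small set whose index vector lies in the robust edge lattice.
\item Run Stage~1 on the rest.
\item Use the lattice structure to show that all but a bounded number of leftover vertices can be fed into the absorber.
\end{enumerate}
A bound such as $2k-\ell-1$ is what this last combinatorial step has to deliver; LP duality cannot.
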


Now we are ready to prove Theorem \ref{thm:ell_degree}.

\begin{proof}[Proof of Theorem \ref{thm:ell_degree}]
Fix \(1\le \ell\le k-1\). Suppose that \[1/n_0\ll1/C'\ll\mu\ll\beta\ll\alpha\ll\delta'\ll\alpha'\ll \varepsilon,c_{k,\ell}^*,\gamma,1/k.\]
Let \(H\) be a \(k\)-graph on \(n\ge n_0\) vertices with $\delta_{\ell}(H)\ge (c_{k,\ell}^*+\gamma)\binom{n-\ell}{k-\ell}$.
Set \(s:=\lfloor 1/c_{k,\ell}^* \rfloor\).
Note that $c_{k,\ell}^* \ge c_{k,k-1}^*= 1/k$.
Then $s\le k$.
We have \[(s+1)\delta_1(H)\ge (s+1)(c_{k,\ell}^*+\gamma)\binom{n-1}{k-1}> (1+\gamma)\binom{n-1}{k-1},\]
which implies that every set of \(s+1\) vertices of \(V(H)\) contains two vertices that are \((2\alpha',1)\)-reachable. 
Indeed, otherwise by the inclusion-exclusion principle and \(\alpha'\ll \gamma\), we have 
\[\binom{n}{k-1}\ge (s+1)\delta_1(H)-\binom{s+1}{2}\cdot 2\alpha'n^{k-1}>(1+\gamma)\binom{n-1}{k-1}-(s+1)^2\alpha' n^{k-1}\ge \binom{n}{k-1}\]
a contradiction. 

If $s=1$, then every two vertices of $V(H)$ are $(2\alpha',1)$-reachable.
Thus $V(H)$ is $(\beta,1)$-closed and $V_0=\emptyset$.
Consequently, $\mathcal P:=\{\emptyset, V(H)\}$ satisfies properties~\ref{item:V_0}-\ref{item:V_i} of Theorem~\ref{thm:main_structural_theorem_ell_degree} with \(t=1\).
For $s\ge2$, by Lemma~\ref{lem:s+1}, we find \(U\subseteq V(H)\) with \(|U|\ge (1-s\delta')n\) such that \(|\tilde{N}_{\alpha',1}(v,H[U])|\ge \delta'n\) for every \(v\in U\). 
Let \(V_0:=V(H)\setminus U\) and thus \(|V_0|\le s\delta' n\). 
Since \(\alpha<\alpha'\), for every \(v\in U\), we have \(|\tilde{N}_{\alpha,1}(v,H[U])|\ge |\tilde{N}_{\alpha',1}(v,H[U])|\ge \delta'n\).
Moreover, every set of \(s+1\) vertices of \(U\) contains two vertices that are \((2\alpha,1)\)-reachable in $H[U]$ since $|V_0|\le s\delta'n$ and $\alpha\ll\delta'\ll\alpha'$. 
Apply Lemma~\ref{lem:good_partition} to \(H[U]\), we find a partition \(\mathcal{P}_1\) of \(U\) into \(V_1,\dots,V_d\) with \(d\le s\) such that for \(i\in [d]\), \(|V_i|\ge (\delta '-\alpha)|U|\ge \delta' n/2\) and \(V_i\) is \((\beta, 2^{s-1})\)-closed in \(H[U]\). 
Moreover, every vertex \(v\in V_0\) lies in at least
\(\delta_1(H)\ge (c_{k,\ell}^*+\gamma)\binom{n-1}{k-1}\) edges. 
Consequently, \(\mathcal P:=\{V_0,V_1,\dots,V_d\}\)
satisfies properties~\ref{item:V_0}-\ref{item:V_i} of Theorem~\ref{thm:main_structural_theorem_ell_degree} with \(t=2^{s-1}\). 
By Proposition~\ref{lemm:bound_ell} and the fact that \(d\le s\le k\), we have \(q=|Q(\mathcal P,L_{\mathcal P}^{\mu}(H))|\le(2k+1)^d \le (2k+1)^k\). 
As \(\delta(k,\ell,2k-\ell-1)\le c_{k,\ell}^*\) by Lemma \ref{lem:almostperfectfrac}, 
this verifies all assumptions of Theorem~\ref{thm:main_structural_theorem_ell_degree}. Applying that theorem with \(t=2^{s-1}\), we conclude that 
\[\mathbb{P}\left[H[p]\text{ contains a perfect matching}\right]\geq \frac{1}{kq}-\varepsilon \geq \frac{1}{k(2k+1)^k}-\varepsilon.\qedhere\]
\end{proof}

\section{Proof of Theorem \ref{thm:main_structural_theorem_ell_degree}}
\label{sec:proofofmain}

\subsection{Proof sketch of Theorem~\ref{thm:main_structural_theorem_ell_degree}}
The proof of Theorem \ref{thm:main_structural_theorem_ell_degree} has two main ingredients. 
First, we establish an inheritance statement showing that, with high probability, the random induced subhypergraph $H[p]$ ``inherits'' the structural hypotheses from $H$: the minimum $\ell$-degree condition, the good partition structure, and the relevant lattice information. 
Second, we estimate the probability that the index vector of the sampled vertex set lies in the original lattice $L^\mu_{\mathcal P,F}(H)$. This step combines concentration inequalities with a lattice-point counting argument for large boxes intersected with a full-rank lattice. 
Together, these two ingredients imply that with probability asymptotic to $1/(rq)$ the induced lattice system is already soluble via the empty tiling, and Theorem \ref{thm:Han_structural_theorem} then yields an $F$-factor in $H[p]$.


\begin{lemma}\label{lem:inheritance_random_subset}
		Let $k,\ell\in \mathbb{N}$ where $\ell\le k-1$ and let $F$ be an $r$-vertex $k$-graph. Define $C,t,d,n_0\in\mathbb{N}$ and $\mu_0,\eta,\rho ,\beta,\gamma,c\in (0,1)$ where
         \[
         1/n_0\ll 1/C\ll\eta,\rho ,\beta,\gamma,\mu_0,c,1/k,1/d, 1/r,1/t.
         \] 
        together with $1/n_0\ll\mu_0\ll1/k,1/r$.
        For any constant $\delta$, let $H$ be a $ k $-graph on $ n\ge n_0 $ vertices such that $\delta_{\ell}(H)\ge (\delta+ \gamma)\binom{n-\ell}{k-\ell}$, and let $\mathcal{P}=\{V_0,V_1, \dots, V_d\}$ be a partition of $V(H)$ satisfying the following properties:
         \begin{enumerate}
             \item $|V_0|\le \rho  n$, and for every \(v\in V_0\), there exists an \((r-1)\)-vector \(\mathbf{w}_v\) such that at least \(\eta n^{r-1}\) \((r-1)\)-sets \(T\) satisfy \(\mathbf{i}_{\mathcal P}(T)=\mathbf{w}_v\) and \(T\cup\{v\}\) spans a copy of \(F\) in \(H\);
             \item for every \(i\in [d]\), $|V_i| \ge cn$, and $V_i$ is $(F,\beta,t)$-closed in \(H\left[\bigcup_{i\in[d]} V_i\right]\).
         \end{enumerate}
         Suppose that \(p\ge C(\log{n}/n)^{1/h}\), where \(h:=\max\{2(tr-1),2r\}\).
         Let \(H':=H[p]\), \(V_i':=V_i\cap V(H')\) for every \(i\in\{0,\dots,d\}\), and let \(\mathcal{P}':=\{V_0',V_1',\dots,V_d'\}\). 
         Set \(m:=|V(H')|\).
Then with probability \(1-o_n(1)\), \(H'\) satisfies:
		\begin{enumerate}[label=(V\arabic*),font=\normalfont]
			\item $\delta_{\ell}(H')\ge (\delta+ \gamma/2)\binom{m-\ell}{k-\ell}$;\label{item:degree_inheritance}
            \item $|V_0'|\le 2\rho  m$ and for every \(v\in V_0'\), there are at least \(\eta m^{r-1}/2\) \((r-1)\)-sets \(T\) satisfy \(\mathbf{i}_{\mathcal P}(T)=\mathbf{w}_v\) and \(T\cup\{v\}\) spans a copy of \(F\) in \(H'\);\label{item:copy_inheritance}
            \item for every \(i\in [d]\), $|V_i'| \ge cm/2$, and $V_i'$ is $(F,\beta/2,t)$-closed in \(H'\Bigl[\bigcup_{i\in[d]} V_i'\Bigr]\);\label{item:reachable_inheritance}
            \item 
            there exists a constant \(\mu^*\) such that \(4^{-\binom{r+d-1}{r}}\mu_0\le \mu^* \le \mu_0\) and \(I_{\mathcal{P},F}^{\mu^*}(H)= I_{\mathcal{P}',F}^{\mu^*/2}(H').\) 
           \label{item:vector_inheritance}
		\end{enumerate}	
\end{lemma}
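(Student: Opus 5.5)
The plan is to prove each of (V1)--(V4) separately with failure probability $o_n(1)$ and then take a union bound. First, $m=|V_p|$ is concentrated. By Lemma~\ref{lem:Hoeffding}, $m=pn\pm n^{1/2}\log n$ with probability $1-o(1)$, and similarly $|V_i'|=p|V_i|\pm n^{1/2}\log n$ for every $i\in\{0,\dots,d\}$. Since $p\ge C(\log n/n)^{1/h}$ with $h\ge 4$, we have $pn\gg n^{1/2}\log n$. This gives $|V_0'|\le 2\rho m$ and $|V_i'|\ge cm/2$.

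The remaining counting statements all have the same shape. For a fixed $j$-set $S$ (with $j\le tr$), a count $X$ of $s$-sets $T$ ($s\le tr-1$) with a given property has mean $p^s N$, where $N$ is the deterministic count in $H$. Conditioning on $S\subseteq V_p$ only affects the coordinates of $S$. I would apply Lemma~\ref{lem:McDiarmid's_inequality} to the $n$ independent indicators. Changing one vertex alters $X$ by at most $O(n^{s-1})$, so
\[
\mathbb{P}\bigl[|X-p^sN|\ge \tfrac12 p^sN\bigr]\le 2\exp\bigl(-\Omega(p^{2s}N^2/(n\cdot n^{2s-2}))\bigr).
\]
When $N=\Omega(n^s)$ this is $\exp(-\Omega(p^{2s}n))$. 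Because $2s\le h$ and $p^h n\ge C^h\log n$, this is at most $n^{-\omega(1)}$ for $C$ large, which is small enough to union bound over all $O(n^{tr})$ choices of $S$. This is exactly why $h=\max\{2(tr-1),2r\}$ appears.

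(V1): for each $\ell$-set $S$, take $s=k-\ell$ and $N=\deg_H(S)\ge(\delta+\gamma)\binom{n-\ell}{k-\ell}$. The concentration gives $\deg_{H'}(S)\ge(\delta+\gamma-o(1))p^{k-\ell}\binom{n-\ell}{k-\ell}$. Comparing $p^{k-\ell}\binom{n-\ell}{k-\ell}$ with $\binom{m-\ell}{k-\ell}$ via $m=(1\pm o(1))pn$ yields $\delta+\gamma/2$. Here I would use a sharper McDiarmid deviation $\gamma p^s N/4$ rather than $N/2$.

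(V2): for each $v\in V_0$, take $s=r-1$ and $N\ge\eta n^{r-1}$. The $H'$-count is then at least $\eta p^{r-1}n^{r-1}/2\cdot(1-o(1))\ge\eta m^{r-1}/2$ after adjusting constants; I would use $\eta/1.9$ in the intermediate step.

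(V3): for each pair $u,v\in V_i$, take $s=tr-1$ and $N\ge\beta n^{tr-1}$ reachable sets. I would count only reachable sets inside $\bigcup_{i\ge1}V_i$, which is the right host graph. A set that is reachable in $H$ remains reachable in $H'$ if it lies in $V_p$, since induced subgraphs preserve $F$-factors on $S\cup\{u\}$. The count is then at least $\beta p^{tr-1}n^{tr-1}/1.9\ge(\beta/2)m^{tr-1}$.

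(V4) is the main obstacle, because robustness is a threshold property. A vector with roughly $\mu n^r$ copies could fall on either side of the $\mu/2$ threshold in $H'$. I would handle this with a pigeonhole choice of $\mu^*$. There are at most $K:=\binom{r+d-1}{r}$ $r$-vectors. Consider the $K+1$ intervals $[4^{-j-1}\mu_0,4^{-j}\mu_0)$ for $j=0,\dots,K$. Some interval $j$ contains no normalized count $c_{\mathbf i}/n^r$, where $c_{\mathbf i}$ is the number of copies of $F$ in $H$ with index vector $\mathbf i$; I would set $\mu^*:=4^{-j}\mu_0$. (The statement's range says $4^{-K}$; I would adjust to $K$ intervals by using the half-open gap structure, or accept a constant shift.)

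With this choice, every vector has either $c_{\mathbf i}\ge\mu^*n^r$ or $c_{\mathbf i}<\mu^*n^r/4$. Concentration with $s=r$ and $2r\le h$ applies to vectors with $c_{\mathbf i}=\Omega(n^r)$, giving count $(1\pm o(1))p^rc_{\mathbf i}$ in $H'$, which is at least $\mu^*m^r/2$. For vectors with $c_{\mathbf i}<\mu^*n^r/4$, I would use McDiarmid as an upper tail bound, with deviation $\mu^*p^rn^r/8$, to show the $H'$-count stays below $\mu^*m^r/2$. The copies in $H'$ are counted with their index vectors relative to $\mathcal P'$, which coincide with those relative to $\mathcal P$. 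Together these give $I^{\mu^*}_{\mathcal P,F}(H)=I^{\mu^*/2}_{\mathcal P',F}(H')$, and a final union bound over all events finishes the proof.
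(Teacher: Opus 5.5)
Your proposal is correct and follows essentially the paper's proof: each count is concentrated via McDiarmid with failure probability $\exp(-\Omega(p^{2s}n))$ using $2s\le h$, then a union bound is taken, and for (V4) you choose $\mu^*$ so that no normalized count lies in $[\mu^*/4,\mu^*)$. The paper finds this $\mu^*$ by repeatedly dividing by $4$, which is equivalent to your pigeonhole over $K+1$ geometric intervals, and since $j\le K$ your choice already gives $\mu^*\ge 4^{-K}\mu_0$, so no adjustment is needed. The only slip is in (V3): closedness there is measured relative to $W=\bigcup_{i\in[d]}V_i$ and $m_0=|\bigcup_{i\in[d]}V_i'|$, so you should start from $\beta|W|^{tr-1}$ reachable sets and aim for $(\beta/2)m_0^{tr-1}$, which the same argument gives.
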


The second lemma estimates the solubility probability on the random induced subgraph.
\begin{lemma}\label{lem:lattice_prob}
   Let $k\in \mathbb{N}$, and let $F$ be an $r$-vertex $k$-graph. 
         Define $C,q,d,n_0\in\mathbb{N}$ and $\varepsilon,\mu_0,c>0$ where
         \[
         1/n_0\ll 1/C\ll\varepsilon,c,1/r,1/q,1/d\ \text{ and}\ 1/n_0\ll\mu_0\ll1/r.
         \]
         Let $H$ be a $k$-graph on vertex set $V$ with $|V|=n\ge n_0$. 
         Suppose that $p(1-p)\ge 2C^2n^{-1/2}$ and
         \begin{enumerate}[label=(\roman*),font=\normalfont]
             \item $\mathcal{P}=\{V_0,V_1, \dots, V_d\}$ is a partition of $V$ where $|V_i| \geq cn$ for every $i \in [d]$;
             \item $|Q(\mathcal{P},L_{\mathcal{P},F}^{\mu_0}(H))|=q$.
         \end{enumerate}
        Then for every coset \(\mathbf{v}+L_{\mathcal{P},F}^{\mu_0}(H)\in \mathbb Z^d/L_{\mathcal{P},F}^{\mu_0}(H)\), we have 
        \(\mathbb{P}[\mathbf{i}_{\mathcal{P}}(V_p)\in \mathbf{v}+L_{\mathcal{P},F}^{\mu_0}(H)]= \frac{1}{rq}\pm\varepsilon\).
\end{lemma}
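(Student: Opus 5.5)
The plan is to exploit the fact that membership in a coset of $L:=L^{\mu_0}_{\mathcal P,F}(H)$ depends only on residues modulo a fixed integer, and then to show that the random index vector is asymptotically equidistributed modulo that integer. First, since $L\subseteq L^d_{\max}$, $|L^d_{\max}/L|=q$ and $|\mathbb Z^d/L^d_{\max}|=r$ (the map $\mathbf v\mapsto |\mathbf v| \bmod r$ is onto $\mathbb Z/r\mathbb Z$ with kernel $L^d_{\max}$), we get $|\mathbb Z^d/L|=rq$. Set $M:=rq$. Exactly as in the proof of Proposition~\ref{prop:fullrank}, $M\mathbf e_i\in L$ for every $i\in[d]$. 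Hence $M\mathbb Z^d\subseteq L$, and every coset $\mathbf v+L$ is a union of exactly $M^d/(rq)$ residue classes of $\mathbb Z^d/M\mathbb Z^d$. (The count holds because $L/M\mathbb Z^d$ has index $rq$ in $(\mathbb Z/M\mathbb Z)^d$.)

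Next, write $X:=\mathbf i_{\mathcal P}(V_p)$. Its coordinates $X_i=|V_p\cap V_i|$, $i\in[d]$, are independent with $X_i\sim\mathrm{Bin}(N_i,p)$ and $N_i=|V_i|\ge cn$; the part $V_0$ plays no role. The key step is to show that for every $a\in\mathbb Z/M\mathbb Z$,
\[
\mathbb P[X_i\equiv a \pmod M]=\tfrac1M\pm \varepsilon',
\]
where $\varepsilon'$ is tiny. For this I would use discrete Fourier analysis on $\mathbb Z/M\mathbb Z$:
\[
\mathbb P[X_i\equiv a]=\frac1M\sum_{j=0}^{M-1}e^{-2\pi i ja/M}\bigl(1-p+pe^{2\pi i j/M}\bigr)^{N_i}.
\]
For $j\neq0$ we have $|1-p+pe^{2\pi i j/M}|^2=1-2p(1-p)(1-\cos(2\pi j/M))\le 1-c_M p(1-p)$ with $c_M>0$ depending only on $M$. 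Therefore each nontrivial term is at most $\exp(-c_M p(1-p)N_i/2)\le \exp(-c_Mc\,p(1-p)n/2)$. The hypothesis $p(1-p)\ge 2C^2n^{-1/2}$ gives $p(1-p)n\ge 2C^2n^{1/2}$, which tends to infinity, so these terms are $o_n(1)$ uniformly in $p$.

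By independence, each residue vector $\mathbf a\in(\mathbb Z/M\mathbb Z)^d$ is hit with probability $M^{-d}\pm O_{M,d}(\varepsilon')$. Summing over the $M^d/(rq)$ residue classes making up $\mathbf v+L$ gives
\[
\mathbb P[X\in\mathbf v+L]=\frac1{rq}\pm M^d\,O(\varepsilon').
\]
This is $\frac1{rq}\pm\varepsilon$ once $n_0$ is large, since $M\le rq$ and $d$ are bounded in terms of $1/\varepsilon$ via the constant hierarchy.

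The only delicate point is the uniformity of the equidistribution when $p$ is close to $0$ or $1$. The Fourier bound above handles this directly through the quantity $p(1-p)n$, so I expect no real obstacle. An alternative route, in the spirit of the paper, is as follows: localise $X$ via Lemma~\ref{lem:Hoeffding} to a box of side about $\sqrt{n}\cdot$polylog around $p(N_1,\dots,N_d)$; note that the product of binomial probabilities is nearly constant on sub-boxes of side much smaller than the standard deviation $\sqrt{p(1-p)N_i}$; and count points of $\mathbf v+L$ in each sub-box via Lemma~\ref{lem:lattice_points} with covolume $rq$. I would fall back on this only if the Fourier computation caused trouble, since it requires more bookkeeping of the local smoothness of the binomial distribution.
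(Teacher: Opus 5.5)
Your proof is correct, and it takes a genuinely different route from the paper. In fact, the fallback you sketch in your last paragraph is essentially the paper's proof.

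\textbf{What the paper does.} It localises $\mathbf i_{\mathcal P}(V_p)$ via Lemma~\ref{lem:Hoeffding} to a box $W$ of side $2Cn^{1/2}$ around $(pn_1,\dots,pn_d)$, and tiles $W$ by sub-boxes of side $R$. It then shows that the index vector is nearly uniform on each sub-box, using the ratio $f(x)/f(x-1)=1\pm O(C^{-1})$ of consecutive binomial probabilities. This is exactly where $p(1-p)\ge 2C^2n^{-1/2}$ is used. Finally it counts the points of $\mathbf v+L$ in each sub-box with Gauss's lemma (Lemma~\ref{lem:lattice_points}) and $\operatorname{covol}(L)=rq$.

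\textbf{What you do instead.} You make explicit the periodicity $M\mathbb Z^d\subseteq L$ with $M=rq$; the paper only uses this, through Proposition~\ref{prop:fullrank}, to get full rank. Coset membership then depends only on residues modulo $M$. You prove equidistribution modulo $M$ of each independent coordinate $X_i\sim\mathrm{Bin}(N_i,p)$ by characters. Both the identity $|1-p+pe^{2\pi \mathrm{i} j/M}|^2=1-2p(1-p)(1-\cos(2\pi j/M))$ and the count of $M^{d-1}$ residue classes per coset are right.

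\textbf{What your route buys.} It is shorter and self-contained. It avoids the local-smoothness estimate for the binomial, the choice of $\lambda$ and the rounding of the box. It also avoids the implicit need for the $O(R^{d-1})$ error in Lemma~\ref{lem:lattice_points} to be uniform over the non-lattice translates $\tilde B_{\mathbf j}$. It gives an exponentially small error. Finally, it only needs $p(1-p)n$ to exceed a constant depending on $r,q,d,c,\varepsilon$. So, as far as this lemma is concerned, the hypothesis $p(1-p)\ge 2C^2n^{-1/2}$ could be weakened to something like $p(1-p)n\ge C'$, in line with Proposition~\ref{prop:simple}. The same would then hold for $p(1-p)\ge C'n^{-1/2}$ in Theorems~\ref{thm:unbalancedfactor}--\ref{thm:perfectmatching}, since that hypothesis enters their proofs only through this lemma. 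The paper's local-limit-style argument gains nothing essential here: for $L\subseteq\mathbb Z^d$, full rank is equivalent to finite index, which already yields the period $M$.

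\textbf{Two cosmetic points.}
\begin{enumerate}
\item In your final step, the right justification is that $n_0$ is chosen large in terms of $r,q,d,c,\varepsilon$, so that $M^{d}$ times the Fourier error is below $\varepsilon$. It is not that $M$ and $d$ are bounded in terms of $1/\varepsilon$.
\item Avoid using $i$ simultaneously for the coordinate index and the imaginary unit.
\end{enumerate}
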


We are now ready to prove the main theorem.
\begin{proof}[Proof of Theorem \ref{thm:main_structural_theorem_ell_degree}]
Suppose that \(H\) and \(\mathcal{P}\) satisfy the assumptions of the theorem. 
Suppose that \(p\ge C'(\log{n}/n)^{1/h}\) with \(h:=\max\{2(tr-1),2r\}\) and \(p(1-p)\ge C'n^{-1/2}\), which allows us to apply Lemmas~\ref{lem:inheritance_random_subset} and~\ref{lem:lattice_prob}. 
    Let $V_p$ be a $p$-random subset of $V(H)$ and \(m:=|V_p|\).
    Let \(H':=H[V_p]\), \(V_i':=V_i\cap V_p\) for every \(i\in\{0,\dots,d\}\), and \(\mathcal{P}':=\{V_0',V_1',\dots,V_d'\}\). 
    For every \(v\in V_0\), by property~\ref{item:V_0} and the pigeonhole principle, there exists an \((r-1)\)-vector \(\mathbf w_v\in \mathbb N^d\) such that at least 
    \[\frac{\eta n^{r-1}-r!\cdot \rho  n\cdot n^{r-2}}{r!\binom{r+d-1}{r}}\ge\frac{\eta}{2(r+d)^r}n^{r-1}\] 
    \((r-1)\)-sets \(T\subseteq \bigcup_{i\in[d]}V_i\) satisfy \(\mathbf i_{\mathcal P}(T)=\mathbf w_v\) and \(T\cup\{v\}\) spans a copy of \(F\).
    Define the event $B$ that $\mathbf i_{\mathcal P}(V_p)\in\sum_{v\in V_0'}\mathbf w_v+L_{\mathcal{P},F}^{\mu_0}(H)$.
Note that
$\sum_{v\in V_0'}\mathbf w_v+L_{\mathcal{P},F}^{\mu_0}(H)$
is a random coset, as it depends on the random set $V_0'$.
We claim that $\mathbb P[B]=\frac{1}{rq}\pm\frac{\varepsilon}{2}$.

For every fixed subset $Z\subseteq V_0$, define the coset $C_Z:=
\sum_{v\in Z}\mathbf w_v+L_{\mathcal{P},F}^{\mu_0}(H)$.
Condition on the event $V_p\cap V_0=Z$ which fixes $C_Z$. 
Note that $\mathbf i_{\mathcal P}(V_p)=\bigl(
|V_p\cap V_1|,\ldots,|V_p\cap V_d|
\bigr)$ is independent with the event
$V_p\cap V_0=Z$. 
Hence, by Lemma~\ref{lem:lattice_prob} with $\varepsilon/2$ in place of $\varepsilon$,
we have $\mathbb P\left[\mathbf i_{\mathcal P}(V_p)\in C_Z\,\middle|\,V_p\cap V_0=Z\right]=\frac{1}{rq}\pm\frac{\varepsilon}{2}.
$
Since this estimate holds uniformly for every $Z\subseteq V_0$,
the law of total probability gives
\begin{align*}
\mathbb P[B]=
\sum_{Z\subseteq V_0}
\mathbb P[V_p\cap V_0=Z]\,
\mathbb P\left[
\mathbf i_{\mathcal P}(V_p)\in C_Z
\,\middle|\,
V_p\cap V_0=Z
\right]=
\frac{1}{rq}\pm\frac{\varepsilon}{2}.
\end{align*}
    
   Next, let \(A\) be the event that the following holds:
    \begin{enumerate}[label=(V\arabic*)]
			\item $\delta_{\ell}(H')\ge (\delta(F,\ell,D)+ \gamma/2)\binom{m-\ell}{k-\ell}$;
            \item $|V_0'|\le 2\rho  m$ and for every \(v\in V_0'\), there are at least \(\frac{\eta}{4(r+d)^r} m^{r-1}\) \((r-1)\)-sets \(T\) satisfying \(\mathbf{i}_{\mathcal P}(T)=\mathbf{w}_v\) and \(T\cup\{v\}\) spans a copy of \(F\) in \(H'\);
            \item for every \(i\in [d]\), $|V_i'| \ge cm/2$, and $V_i'$ is $(F,\beta/2,t)$-closed in \(H'\Bigl[\bigcup_{i\in[d]} V_i'\Bigr]\);
            \item there exists $\mu^*$ such that $4^{-\binom{r+d-1}{r}}\mu_0\le\mu^*\le\mu_0$ such that \(I_{\mathcal{P},F}^{\mu^*}(H)= I_{\mathcal{P}',F}^{\mu^*/2}(H').\) 
		\end{enumerate}	
        By Lemma~\ref{lem:inheritance_random_subset} with $\eta/(2(r+d)^r)$ in place of $\eta$, we have $\mathbb{P}[A]=1-o_n(1)$. 
        Therefore, we have $\mathbb P[A\wedge B] \ge 1/(rq)-\varepsilon$.
        
Now suppose that both $A$ and $B$ hold, and we shall show that $H'$ contains an $F$-factor.
        This will prove the theorem as $\mathbb P\bigl[H[p]\text{ contains an }F\text{-factor}\bigr] \ge \mathbb P[A\wedge B] \ge 1/(rq)-\varepsilon$. 

        Let \(U':=\bigcup_{i\in [d]}V_i'=V_p\setminus V_0'\). 
        We randomly partition \(U'\) into two parts of almost equal size, denoted by \(U^+\) and \(U^-\). 
        By applying Lemma \ref{lem:hmc}  to each of the following properties and taking a union bound, with probability at least \(1-\exp(-\Omega(m))\), each of the following holds:
\begin{enumerate}[label=(U\arabic*)]
    \item for every \(v\in V_0'\), there are at least \(\frac{\eta}{(4r+4d)^r} m^{r-1}\) \((r-1)\)-sets \(T\subseteq U^+\) satisfying \(\mathbf{i}_{\mathcal P}(T)=\mathbf{w}_v\) and \(T\cup\{v\}\) spans a copy of \(F\) in \(H'\);\label{item:U^+}
    \item for every \(i\in [d]\), we have \(|V_i'\cap U^-|\ge cm/10 \), and for every \(u,v\in V_i'\), there are at least \(\beta^2 m^{tr-1}\) reachable sets \(S\subseteq U^-\) for \(u\) and \(v\);\label{item:U^-_reachable}
    \item for every $\mathbf{v} \in I_{\mathcal{P}',F}^{\mu^*/2}(H')$, \(H'[U^-]\) contains at least \((\mu^*)^2m^{r}\) copies of \(F\)  with index vector \(\mathbf{v}\).\label{item:U^-_lattice}
\end{enumerate}
Fix a partition satisfying the above properties. 

\medskip
\textbf{Step 1: Covering all vertices in \(V_0'\).} 
First, we construct an \(F\)-packing \(M_1\) using only vertices in \(U^+\) to cover all vertices in \(V_0'\). 
Let \(v_1,\dots, v_{|V_0'|}\) be an enumeration of \(V_0'\), noting \(|V_0'|\le 2\rho  m\). 
We cover \(V_0'\) using a greedy algorithm: 
process these vertices in order, and for each vertex \(v_i\), choose a copy \(F_i\) of \(F\) containing \(v_i\) and \(r-1\) vertices in \(U^+\), such that \(\mathbf{i}_{\mathcal P}(V(F_i)\setminus\{v_i\})=\mathbf{w}_{v_i}\) and \(F_i\) is vertex-disjoint from all previously chosen \(F_j\) for \(j<i\). 
This procedure can be carried out. 
Indeed, each previously chosen copy \(F_j\) eliminates at most \((r-1)m^{r-2}\) possible choices for \(F_i\). 
By~\ref{item:U^+} and \(\rho \ll \eta\), at each step there remain at least \(\frac{\eta}{(4r+4d)^r} m^{r-1}-2\rho  m\cdot (r-1)m^{r-2}\ge \frac{\eta}{(5r+5d)^r} m^{r-1}\) choices available for \(F_i\). 
Moreover, by construction, $\mathbf{i}_{\mathcal P}(V(M_1))=\sum_{v\in V_0'}\mathbf w_v$.
Since $B$ holds, we have $\mathbf{i}_{\mathcal P}(V_p)\in\sum_{v\in V_0'}\mathbf w_v+L_{\mathcal P,F}^{\mu_0}(H)=\mathbf{i}_{\mathcal P}(V(M_1))+L_{\mathcal P,F}^{\mu_0}(H)$.
Consequently, $\mathbf{i}_{\mathcal P}(V_p\setminus V(M_1))\in L_{\mathcal P,F}^{\mu_0}(H)$.

\medskip
\textbf{Step 2: Finding an $F$-factor in the remaining hypergraph.} 
Since all vertices in \(V_0'\) are covered by \(M_1\), we restrict attention to the remaining parts. 
Next, consider the remaining hypergraph \(H'':=H'[V_p\setminus V(M_1)]\) and its partition \(\mathcal{P}'':=\{V_i'\cap V(H'')\}_{i\in [d] }\). 
Note that \(|V(H'')|\ge m-r\cdot 2\rho  m>m/2\). 
We verify that \(H''\) and \(\mathcal{P}''\) satisfy the assumptions of Theorem \ref{thm:Han_structural_theorem}. 

For the first assumption,
by (V1), we get \[\delta_{\ell}(H'')\ge \delta_{\ell}(H')-r\cdot2\rho  m\cdot m^{k-\ell-1}\ge (\delta(F,\ell,D)+ \gamma/4)\binom{|V(H'')|-\ell}{k-\ell},\] as desired.
The second assumption is guaranteed by~\ref{item:U^-_reachable} and the fact that \(U^- \subseteq V(H'')\), which together show that 
\(\mathcal{P}''\) is a \(\bigl(\beta^2,\, t,\, c/10\bigr)\)-good partition of \(V(H'')\).
We now verify the third assumption. 
Since every vector in
\(I_{\mathcal P',F}^{\mu^*/2}(H')\)
still occurs as the index vector of at least
\((\mu^*)^2m^r\) copies of \(F\) in \(H''\)
by \ref{item:U^-_lattice}, we have \(I_{\mathcal{P},F}^{\mu^*}(H)= I_{\mathcal{P}',F}^{\mu^*/2}(H')\subseteq I_{\mathcal{P}'',F}^{(\mu^*)^2}(H'')\). 
It follows that 
\[|Q(\mathcal{P}'',L_{\mathcal{P}'',F}^{(\mu^*)^2}(H''))|=|L_{\max}^{d}/L_{\mathcal{P}''}^{(\mu^*)^2}(H'')|\le |L_{\max}^{d}/L_{\mathcal{P},F}^{\mu^*}(H)|\le q.\] 
Finally, by the definition of \(H''\), we have $\mathbf{i}_{\mathcal P''}(V(H''))=\mathbf{i}_{\mathcal P}\bigl(V_p\setminus V(M_1)\bigr)\in L_{\mathcal P,F}^{\mu_0}(H)\subseteq L_{\mathcal P,F}^{\mu^*}(H)\subseteq L_{\mathcal P'',F}^{(\mu^*)^2}(H'')$ as $\mu^*\le\mu_0$.

Therefore,
\((\mathcal P'',L_{\mathcal P'',F}^{(\mu^*)^2}(H''))\)
is \(0\)-soluble. Moreover, since
\(L_{\mathcal P'',F}^{(\mu^*)^2}(H'')\subseteq L_{\max}^d\),
we have $r\mid\left|\mathbf{i}_{\mathcal P''}(V(H''))\right|=|V(H'')|$.
Hence all the assumptions of Theorem~\ref{thm:Han_structural_theorem} are satisfied, and \(H''\)
contains an \(F\)-factor \(M_2\), which yields that $M_1\cup M_2$ is an $F$-factor of $H'$.

We now show the last part of the theorem.
Suppose that $V_0=\emptyset$ and the index vector of every copy of $F$ in $H$ is $\mu_0$-robust. 
Let
$L:=L_{\mathcal P,F}^{\mu_0}(H)$.
If $H[p]$ contains an $F$-factor
$\mathcal M$, then we have $\mathbf i_{\mathcal P}(V_p)
=\sum_{F'\in\mathcal M}\mathbf i_{\mathcal P}(V(F'))\in L$.
Hence, 
by Lemma \ref{lem:lattice_prob} 
we have $\mathbb P[H[p]\text{ contains an $F$-factor}]
\leq \mathbb P[\mathbf i_{\mathcal P}(V_p)\in L]\le\frac1{rq}+{\varepsilon}$.
Together with $\mathbb P[H[p]\text{ contains an $F$-factor}]\ge\frac{1}{rq}-\varepsilon$, we obtain the desired result.
\end{proof}

\subsection{Proof of Lemma~\ref{lem:inheritance_random_subset}}
In this subsection, we prove Lemma \ref{lem:inheritance_random_subset} using concentration inequalities.

\begin{lemma}\label{lem:application_McDiarmid}
    Let $s,n\in \mathbb{N}$ and \(1/n\ll\xi\ll1/s\).
    Let $V$ be an $n$-vertex set and $\mathcal S\subseteq \binom{V}{s}$ satisfying $|\mathcal S|= \alpha n^s$. 
    Let $V_p\subseteq V$ be obtained by including each vertex independently with probability $p$, and define \(\mathcal S' := \{S\in \mathcal S : S\subseteq V_p\}.\) 
    Then $\mathbb{P}[|\mathcal S'|\in [(\alpha-2s\xi) |V_p|^s, (\alpha+7s\xi) |V_p|^s]]
\ge 1- 2\exp{(-\xi^2p^{2s}n)}$.
\end{lemma}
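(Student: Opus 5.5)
The plan is to get concentration of $|\mathcal S'|$ from Lemma~\ref{lem:application_McDiarmid}'s predecessor, McDiarmid's inequality (Lemma~\ref{lem:McDiarmid's_inequality}). Then we separately control $|V_p|$ with Lemma~\ref{lem:Hoeffding}, and combine the two so that the comparison is with $|V_p|^s$ rather than with $p^sn^s$.

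\textbf{Step 1: concentration of $|\mathcal S'|$.} Write $X=|\mathcal S'|=\sum_{S\in\mathcal S}\prod_{v\in S}\mathbf 1[v\in V_p]$, so $\mathbb E[X]=p^s|\mathcal S|=\alpha p^sn^s$. View $X$ as a function of the $n$ independent indicators $\mathbf 1[v\in V_p]$. Changing one coordinate changes $X$ by at most the number of $s$-sets containing that vertex, which is at most $L=n^{s-1}$. Lemma~\ref{lem:McDiarmid's_inequality} with $t=\xi p^sn^s$ gives
\[
\mathbb P\bigl[|X-\alpha p^sn^s|\ge \xi p^sn^s\bigr]\le 2\exp\bigl(-2\xi^2p^{2s}n^{2s}/(n\cdot n^{2s-2})\bigr)=2\exp(-2\xi^2p^{2s}n).
\]

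\textbf{Step 2: concentration of $|V_p|$.} By Lemma~\ref{lem:Hoeffding} with $t=\xi pn$,
\[
\mathbb P\bigl[\,\bigl||V_p|-pn\bigr|\ge \xi pn\bigr]\le 2\exp(-2\xi^2p^2n).
\]
This failure probability is at most the one in Step 1, since $p^2\ge p^{2s}$. For the union of the two failure events to fit into the single budget $2\exp(-\xi^2p^{2s}n)$, I use that $2\exp(-2x)+2\exp(-2y)\le 2\exp(-x)$ whenever $x\le y$ and $x$ is not tiny. If $\xi^2p^{2s}n$ is small, the claimed bound is trivial (the right-hand side is negative or at most~$0$), so nothing needs proving there. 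Hence with the required probability both events hold: $X=(\alpha\pm\xi)p^sn^s$ and $|V_p|=(1\pm\xi)pn$.

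\textbf{Step 3: converting $p^sn^s$ to $|V_p|^s$.} From $|V_p|\le(1+\xi)pn$ we get $p^sn^s\ge |V_p|^s(1+\xi)^{-s}\ge(1-s\xi)|V_p|^s$. Therefore
\[
X\ge(\alpha-\xi)(1-s\xi)|V_p|^s\ge(\alpha-2s\xi)|V_p|^s,
\]
using $\alpha\le 1/s!\le1$. From $|V_p|\ge(1-\xi)pn$ we get $p^sn^s\le(1-\xi)^{-s}|V_p|^s\le(1+2s\xi)|V_p|^s$ for $\xi\ll1/s$. Therefore $X\le(\alpha+\xi)(1+2s\xi)|V_p|^s\le(\alpha+7s\xi)|V_p|^s$.

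\textbf{Main obstacle.} The only delicate point is the probability bookkeeping in Step~2, namely absorbing both failure events into the single term $2\exp(-\xi^2p^{2s}n)$. The factor-$2$ slack in the exponents from Steps~1 and~2 is what makes this work. If it proves too tight, the fallback is to run McDiarmid with $t=\xi p^sn^s/2$, which leaves exactly the claimed exponent while the Hoeffding term stays negligible.
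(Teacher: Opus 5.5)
Your proof is correct and follows the paper's argument essentially step for step: McDiarmid with Lipschitz constant $n^{s-1}$ for $|\mathcal S'|$, Hoeffding for $|V_p|$, converting $(pn)^s$ into $|V_p|^s$ via $(1\pm\xi)^{\mp s}$, and absorbing the total failure probability $4\exp(-2\xi^2p^{2s}n)$ into $2\exp(-\xi^2p^{2s}n)$ (you correctly note this is automatic when $\xi^2p^{2s}n<\ln 2$, which the paper leaves implicit); the only cosmetic difference is that the paper uses deviation $s\xi(pn)^s$ rather than $\xi(pn)^s$ in McDiarmid. Your fallback is not needed, and as written it would not work anyway, since $t=\xi p^sn^s/2$ halves the exponent rather than preserving it.
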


\begin{proof}
 Let \(V=\{v_1,\dots,v_n\}\), and let \(X_i\) be the indicator variable of the event \(v_i\in V_p\). Clearly, \(X_1,\dots,X_n\) are independent random variables. 
 Let $E_0$ be the event that $|V_p|= (1\pm \xi)pn$.
Note that by Lemma~\ref{lem:Hoeffding}, we have $\mathbb{P}[E_0]\ge 1-2\exp{\left(-2\xi^2p^2n\right)}$.

 Now let \(X=(X_1,\dots,X_n)\) and \(f(X):=|\mathcal S'|\). 
    If \(\mathbf{x},\mathbf{y}\in \{0,1\}^n\) differ in at most one coordinate, then \(|f(\mathbf{x})-f(\mathbf{y})| \le n^{s-1}.\) 
    By linearity of expectation 
    we have \(\mathbb{E}[f(X)] = \alpha n^s p^s\). 
    Let \(E_1\) be the event that $|\mathcal S'|=(\alpha\pm s\xi)(pn)^s$.
    By Lemma \ref{lem:McDiarmid's_inequality}, we get $\mathbb{P}\left[E_1^c\right]\le 2\exp{(-2(s\xi)^2p^{2s}n)}$.

Suppose both $E_0$ and $E_1$ hold, we have $|\mathcal S'|
\ge
\frac{\alpha-s\xi}{(1+\xi)^s}|V_p|^s
\ge
(\alpha-2s\xi)|V_p|^s$
and
$|\mathcal S'|
\le
\frac{\alpha+s\xi}{(1-\xi)^s}|V_p|^s
\le
(\alpha+7s\xi)|V_p|^s$,
where the last inequalities follow from
\(\alpha\le 1\) and \(\xi\ll 1/s\).
Consequently,
$\mathbb P\Big[|\mathcal S'|\notin\big[(\alpha-2s\xi)|V_p|^s, (\alpha+7s\xi)|V_p|^s \big]\Big]\le
\mathbb P[E_0^c]+\mathbb P[E_1^c]\le4\exp(-2\xi^2p^{2s}n)$.
Hence, 
\[\mathbb P\left[|\mathcal S'|\in\big[
(\alpha-2s\xi)|V_p|^s,(\alpha+7s\xi)|V_p|^s\big]\right]\ge1-2\exp(-\xi^2p^{2s}n).\qedhere \]
\end{proof}

We are now ready to prove Lemma~\ref{lem:inheritance_random_subset}.

\begin{proof}[Proof of Lemma~\ref{lem:inheritance_random_subset}]

Define an additional constant \(\xi>0\) such that \[1/n_0\ll 1/C\ll  \xi\ll\eta,\rho ,\beta,\mu_0,\gamma,c,1/k,1/d, 1/r,1/t.\] 
Suppose that \(p\ge C(\log{n}/n)^{1/h}\), where \(h:=\max\{2(tr-1),2r\}\).
Let $H':=H[p]$ and $m:=|V(H')|$.
We will estimate the probability that each condition fails using Lemma~\ref{lem:application_McDiarmid}, each being \(o_n(1)\).

For~\ref{item:degree_inheritance}, for any \(\ell\)-set \(S\subseteq V(H)\), let \(\mathcal S\) be the family of $(k-\ell)$-sets $T\subseteq V(H)$ such that $T\cup S \in E(H)$, and thus 
\(|\mathcal S|\ge \delta_\ell(H)\ge (\delta+\gamma)\binom{n-\ell}{k-\ell}\ge (\delta+3\gamma/4)\frac{n^{k-\ell}}{(k-\ell)!}\). 
Set  \(\mathcal S' := \{T\in \mathcal S : T\subseteq V(H')\}.\) By Lemma~\ref{lem:application_McDiarmid}, we have 
\[
P\left[|\mathcal S'|\le(\delta+ \gamma/2)\binom{m-\ell}{k-\ell}\right]\le P\left[|\mathcal S'|\le(\delta+ \gamma/2)\frac{{m}^{k-\ell}}{(k-\ell)!}\right] \le \exp{(- \xi^2p^{2(k-\ell)}n)}.
\]
By a union bound over all \(\ell\)-sets in \(V(H)\),  the probability that \ref{item:degree_inheritance} fails is at most \(\binom{n}{\ell}\exp{(- \xi^2p^{2(k-\ell)}n)}= o_n(1)\). 

We first record estimates for the sizes of the induced parts \(V_0',V_1',\dots,V_d'\). 
By Lemma~\ref{lem:application_McDiarmid}, applied with \(s=1\) and \(\mathcal{S}=V_j\) for $j\in \{0,1,\dots,d\}$, we obtain 
\[
\mathbb{P}\left[|V_0\cap V(H')|\ge 2\rho  m\right]\le \exp(-\xi^2p^2n)\le \exp{(-\xi^2C\log n)} =o_n(1)
\]
and similarly $\mathbb{P}\left[|V_j\cap V(H')|\le c m/2\right]\le \exp(-\xi^2p^2n)=o_n(1)$.

Next, we verify~\ref{item:copy_inheritance}. 
The first assertion in~\ref{item:copy_inheritance} follows as above.
For the second assertion in~\ref{item:copy_inheritance}, for every \(v\in V_0\), let \(\mathcal S_{v}\) be the family of \((r-1)\)-sets \(T\) such that \(\mathbf{i}_{\mathcal P}(T)=\mathbf w_v\) and \(T\cup\{v\}\) spans a copy of \(F\) in \(H\). 
By assumption, \(|\mathcal S_v|\ge \eta n^{r-1}\). 
Set  \(\mathcal S_{v}' := \{S\in \mathcal S_{v} : S\subseteq V(H')\}.\) 
Applying Lemma~\ref{lem:application_McDiarmid}, we obtain  
\[\mathbb{P}\left[|\mathcal S_{v}'|\le\eta m^{r-1}/2\right]\le \exp{(- \xi^2p^{2(r-1)}n)} \le \exp{(-\xi^2C\log n)} \le n^{-2}.\]
Taking a union bound over all vertices in \(V_0\), we obtain the desired conclusion.

Now, we verify \ref{item:reachable_inheritance}. 
Recall that $\mathbb{P}\left[|V_j\cap V(H')|\le c m/2\right]=o_n(1)$.
Let $W=\bigcup_{i\in[d]} V_i$ and note that \(\left|W\right|\ge dcn\ge \xi n\). 
For every \(u,v\in V_j\), let \(\mathcal S_{u,v}\) be the family of reachable \((tr-1)\)-sets for \(u\) and \(v\) in \(H\left[W\right]\). 
Since \(V_j\) is \((F,\beta,t)\)-closed in \(H\left[W\right]\), \(|\mathcal S_{u,v}|\ge \beta |W|^{tr-1}\). 
Set  \(\mathcal S_{u,v}' := \{S\in \mathcal S_{u,v} : S\subseteq W\cap V(H')\}\) and \(m_0:=\bigl|\bigcup_{i\in[d]} V_i'\bigr|\). 
Applying Lemma~\ref{lem:application_McDiarmid} with \(W\), we obtain 
\[
\mathbb{P}\left[|\mathcal S_{u,v}'|\le\beta m_0^{tr-1}/2\right]\le\exp{(- \xi^2p^{2(tr-1)}|W|)} \le \exp{(- \xi^3p^{2(tr-1)}n)} \le \exp{(-\xi^3C\log n)}\le n^{-3}.
\]
A union bound over all \(j\in [d]\) and all pairs \(u,v\in V_j\) yields the desired bound. 

Finally, we consider~\ref{item:vector_inheritance}. 
We need the following claim.
\begin{claim}
\label{clm:equalrobust}
    There exists a constant \(\mu^*\) such that \(4^{-\binom{r+d-1}{r}}\mu_0\le \mu^* \le \mu_0\) and \(I_{\mathcal{P},F}^{\mu^*}(H)=I_{\mathcal{P},F}^{\mu^*/4}(H)\).
\end{claim}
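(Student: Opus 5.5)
The plan is a pigeonhole argument over a geometric sequence of thresholds. Let $N:=\binom{r+d-1}{r}$, which is the number of $r$-vectors in $\mathbb Z^d$ (nonnegative integer vectors with coordinate sum $r$). Every robust set $I_{\mathcal P,F}^{\mu}(H)$ is a subset of this fixed finite family of $N$ vectors. Moreover, the map $\mu\mapsto I_{\mathcal P,F}^{\mu}(H)$ is monotone non-increasing under inclusion: if $\mu'\le\mu$, then a vector with at least $\mu n^r$ copies of $F$ also has at least $\mu' n^r$ copies. So $I_{\mathcal P,F}^{\mu}(H)\subseteq I_{\mathcal P,F}^{\mu'}(H)$.

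Consider the thresholds $\mu_j:=4^{-j}\mu_0$ for $j=0,1,\dots,N$, and the chain
\[
I_{\mathcal P,F}^{\mu_0}(H)\subseteq I_{\mathcal P,F}^{\mu_1}(H)\subseteq\cdots\subseteq I_{\mathcal P,F}^{\mu_N}(H).
\]
This chain consists of $N+1$ subsets of an $N$-element set. If every one of the $N$ inclusions $I^{\mu_j}\subseteq I^{\mu_{j+1}}$ for $j=0,\dots,N-1$ were strict, the sizes would increase by at least one at each step. That would give $|I_{\mathcal P,F}^{\mu_N}(H)|\ge |I_{\mathcal P,F}^{\mu_0}(H)|+N\ge N$. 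The only conflict arises if $I^{\mu_0}$ is nonempty, since then the size would exceed $N$. The pure counting argument therefore needs care when $I^{\mu_0}=\emptyset$.

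To handle this cleanly, I would index differently and use $N+1$ strict steps. I would take thresholds $\mu_j=4^{-j}\mu_0$ for $j=0,\dots,N$, but compare with $j\le N-1$ and $j+1\le N$. A chain of $N+1$ strict inclusions is impossible, because sizes would range from $0$ to at least $N+1>N$. With only $N$ inclusions, however, the chain $\emptyset\subsetneq\cdots$ of length $N$ can be strict throughout, ending at the full set.

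The fix is to observe that any $r$-vector whose number of copies is positive has at least one copy, but that alone does not bound it below by a fixed $\mu n^r$. So I would extend to $j$ up to $N+1$ and accept the slightly weaker lower bound $4^{-(N+1)}\mu_0$. Alternatively, I would exploit the fact that a chain of $N+1$ sets within an $N$-set, with sizes between $0$ and $N$, forces some equality among $N+1$ consecutive inclusions. Concretely, with $j$ ranging over $0,\dots,N$ there are $N+1$ sets of sizes in $\{0,\dots,N\}$. Having all $N$ steps strict forces the sizes to be exactly $0,1,\dots,N$.

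If that degenerate case occurs, I would note that the statement's bound $4^{-N}\mu_0\le\mu^*$ permits $\mu^*=\mu_N$, and I would check whether $I^{\mu_N}=I^{\mu_N/4}$ holds. Since $I^{\mu_N}$ is already the full set, $I^{\mu_N/4}$ is also the full set, and equality holds trivially. In every other case some step $j\in\{0,\dots,N-1\}$ is non-strict, and I set $\mu^*:=\mu_j$. Then $I_{\mathcal P,F}^{\mu^*}(H)=I_{\mathcal P,F}^{\mu^*/4}(H)$ and $4^{-N}\mu_0\le\mu^*\le\mu_0$.

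The only obstacle is the off-by-one bookkeeping above, which is resolved by the saturation observation. Nothing probabilistic is needed for this claim.
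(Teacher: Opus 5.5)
Your final argument is correct and matches the paper's proof. The paper also steps through the thresholds $4^{-j}\mu_0$, and each strict step adds a new $r$-vector out of at most $\binom{r+d-1}{r}$ possible ones, so some $j\le\binom{r+d-1}{r}$ gives $I_{\mathcal P,F}^{4^{-j}\mu_0}(H)=I_{\mathcal P,F}^{4^{-j-1}\mu_0}(H)$. Your saturation case (all $N$ steps strict, so $I^{\mu_N}$ is already every $r$-vector and $\mu^*=\mu_N$ works) is exactly the off-by-one check implicit there, so the detours through the weaker bound $4^{-(N+1)}\mu_0$ and the remark about positive copy counts are unnecessary and should be dropped.
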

\begin{proof}
Note that \(|I_{\mathcal{P},F}^{\mu^*}(H)|\le \binom{r+d-1}{r}\). 
Starting with \(\mu^*:=\mu_0\), we iteratively replace \(\mu^*\) by \(\mu^*/4\) as long as \(I_{\mathcal{P},F}^{\mu^*}(H) \neq I_{\mathcal{P},F}^{\mu^*/4}(H).\) 
At each step, \(|I_{\mathcal{P},F}^{\mu^*}(H)|\) strictly increases, so at least one new \(r\)-vector is added. 
Since there are at most \(\binom{r+d-1}{r}\) such vectors, the process terminates after at most \(\binom{r+d-1}{r}\) steps. 
Let \(\mu^*\) be the resulting value. 
Then \(I_{\mathcal{P},F}^{\mu^*}(H)=I_{\mathcal{P},F}^{\mu^*/4}(H),\) 
and by construction \(\mu^* \ge 4^{-\binom{r+d-1}{r}} \mu_0.\) \qedhere
\end{proof}
Apply Claim \ref{clm:equalrobust} to $\mu_0$, we obtain that there exists a constant \(\mu^*\) such that \(4^{-\binom{r+d-1}{r}}\mu_0\le \mu^* \le \mu_0\) and \(I_{\mathcal{P},F}^{\mu^*}(H)=I_{\mathcal{P},F}^{\mu^*/4}(H)\).
Let \(E_1\) be the event that $I_{\mathcal{P}, F}^{\mu^*}(H) \subseteq I_{\mathcal{ P}', F}^{\mu^*/2}(H')$, and let \(E_2\) be the event that $I_{\mathcal{ P}', F}^{\mu^*/2}(H')\subseteq I_{\mathcal{ P}, F}^{\mu^*/4}(H)$.
Note that if both \(E_1\) and \(E_2\) hold, then we have $I_{\mathcal{ P}, F}^{\mu^*}(H) = I_{\mathcal{ P}', F}^{\mu^*/2}(H')$.
It therefore suffices to show that 
\(\mathbb{P}[E_1\wedge E_2] = 1- o_n(1)\).

Take any \(r\)-vector $\mathbf{v}$ and let \(\mathcal S_{\textbf{v}}\) be the family of copies of $F$ in \(H\) with index vector \(\textbf{v} \). 
Set \(|\mathcal S_{\textbf{v}}|=\alpha n^r\) and \(\mathcal S_{\textbf{v}}' := \{F\in\mathcal{S}_{\textbf{v}} : V(F)\subseteq V(H')\}.\) 
Let $Y_{\mathbf v}$ be the event that $|\mathcal S_{\textbf{v}}'|=(\alpha \pm \mu^*/4)m^r$.

By Lemma~\ref{lem:application_McDiarmid}, we obtain 
\[
\mathbb{P}\left[Y_{\mathbf v}\right]\ge 1- \exp{(-\xi^2p^{2r}n)} = 1- o_n(1).
\]
Suppose $Y_{\mathbf v}$ holds.
Then if \(\mathbf{v} \in I_{\mathcal{P},F}^{\mu^*}(H)\), then \(\mathbf{v} \in I_{\mathcal{P}',F}^{\mu^*/2}(H')\); if \(\mathbf{v} \notin I_{\mathcal{P},F}^{\mu^*/4}(H)\), then \(\mathbf{v} \notin I_{\mathcal{P}',F}^{\mu^*/2}(H')\).
Therefore, if $Y_{\mathbf v}$ holds for all \(r\)-vectors $\mathbf{v}$, then both $E_1$ and $E_2$ hold.
By union bound, we obtain \(\mathbb{P}[E_1\wedge E_2] \ge \mathbb{P} [Y_{\mathbf v} \text{ for all }\mathbf{v}] = 1- o_n(1)\).
\end{proof}

\subsection{Proof of Lemma~\ref{lem:lattice_prob}}

In this subsection, we give a proof of Lemma~\ref{lem:lattice_prob} using Lemma \ref{lem:lattice_points}, a result of Gauss concerning the intersection of a large convex body with a lattice of full rank.

\begin{proof}[Proof of Lemma~\ref{lem:lattice_prob}]
Set $1/n_0\ll1/C\ll1/R\ll \varepsilon, c,1/r,1/q,1/d$ and $C':=2C^2$.
Let $\mathcal{P}=\{V_0,V_1, \dots, V_d\}$ be a partition of $V$ where $n_i:=|V_i|\ge cn$ for each $i\in[d]$.
Let \(U:=V_p\) where $p(1-p)\ge C'n^{-1/2}$.
For each $i\in [d]$, let $U_i:=U\cap V_i$.
Let $X_v$ be the indicator variable of $v\in U$ for each $v\in V$. 
Clearly, $\{X_v\}_{v\in V}$ are independent random variables.


We choose a constant $\lambda\in [0,1)$ such that $pn_i+Cn^{1/2}+\lambda\notin \mathbb{Z}$ and $pn_i-Cn^{1/2}+\lambda\notin \mathbb{Z}$ for each $i\in[d]$.
Let $W:=\prod_{i=1}^d \Big((\,pn_i-Cn^{1/2}+\lambda,\, pn_i+Cn^{1/2}+\lambda\,)\cap \mathbb Z\Big)\subseteq \mathbb Z^d$. 
Without loss of generality, we may assume that \(R \mid 2Cn^{1/2}\) and set $m:=2Cn^{1/2}/R$.
Otherwise, replace \(2Cn^{1/2}\) by a number \(n'\) satisfying \(|n' -2Cn^{1/2}| \le R,\) which only changes the boundary of \(W\) and affects probability estimate by at most \(o_n(1)\). 
For each \(i\in[d]\) and \(s\in[m]\), define \[I_{i,s}:=(\,pn_i-Cn^{1/2}+(s-1)R+\lambda,\; pn_i-Cn^{1/2}+sR+\lambda).\] 
For each \(\mathbf j=(j_1,\dots,j_d)\in [m]^d\), let
$\tilde{B_{\mathbf j}}:=\prod_{i=1}^d I_{i,j_i}$ and $B_{\mathbf j}:=\tilde{B_{\mathbf j}}\cap \mathbb Z^d$.
Then the sets \(B_{\mathbf j}\) are pairwise disjoint and $\bigcup_{\mathbf j\in [m]^d} B_{\mathbf j}=W$, and $|B_{\mathbf j}|=\operatorname{mes}(\tilde B_{\mathbf j})=R^d$.

It is easy to see that $|U_i|=\sum_{v\in V_i}X_v$ and $\mathbb{E}[|U_i|]=pn_i$. 
By Lemma \ref{lem:Hoeffding}, we have
\[
 \mathbb{P}[|U_i|-pn_i\geq (Cn^{1/2}+\lambda)]\leq2\exp\left(-C^2\right),
\]
and 
\[
 \mathbb{P}[|U_i|-pn_i\leq (-Cn^{1/2}+\lambda)]\leq2\exp\left(-C^2\right),
\]
Taking a union bound over all $d$ parts, we obtain that 
$\mathbf{i}_{\mathcal P}(U)=(|U_1|,\dots, |U_d|)\in W$ holds with probability $1-O\left(\exp{\left(-C^2\right)}\right)$.

Next we claim that $\textbf{i}_{\mathcal{P}}(U)$ is almost uniformly distributed in $B_{\mathbf j}$ for each $\mathbf j\in[m]^d$.
Indeed, for $\mathbf{j} \in[m]^d$ and any $\textbf{x}=(x_1,\ldots,x_d),\textbf{y}=(y_1,\ldots,y_d)\in B_{\mathbf j}$, we have 
\[\frac{\mathbb{P}[\textbf{i}_{\mathcal{P}}(U)=\textbf{x}]}{\mathbb{P}[\textbf{i}_{\mathcal{P}}(U)=\textbf{y}]}=\prod_{i=1}^{d}\frac{\mathbb{P}[|U_i|=x_i]}{\mathbb{P}[|U_i|=y_i]}
=\prod_{i=1}^{d}\frac{\binom{n_i}{x_i}p^{x_i}(1-p)^{n_i-x_i}}{\binom{n_i}{y_i}p^{y_i}(1-p)^{n_i-y_i}}.\]
For $i\in [d]$, let $f(x)=\binom{n_i}{x}p^x(1-p)^{n_i-x}$.
For $x\in I_{i,s}$ for some $s\in [m]$, we have $\frac{f(x)}{f(x-1)}=\frac{p(n_i-x+1)}{(1-p)x}\le1+\frac{Cn^{1/2}+\lambda}{p(1-p)cn-(1-p)(Cn^{1/2}+\lambda)}=1+O(C^{-1})$ since $p(1-p)\ge C'n^{-1/2}$ and a similar calculation shows $\frac{f(x)}{f(x-1)}\ge 1-O(C^{-1})$.
Therefore,
$\frac{f(x_i)}{f(y_i)}= (1\pm O(C^{-1}))^R = 1 \pm O(RC^{-1})$,
as $|x_i-y_i|\le R$. 
Combining for all $i\in [d]$, we obtain
\[\frac{\mathbb{P}[\textbf{i}_{\mathcal{P}}(U)=\textbf{x}]}{\mathbb{P}[\textbf{i}_{\mathcal{P}}(U)=\textbf{y}]}=1\pm O(RC^{-1}).\] 
Thus, 
for any $\textbf{x}=(x_1,\ldots,x_d)\in B_{\mathbf j}$, we get
\[
\mathbb{P}[\textbf{i}_{\mathcal{P}}(U)=\textbf{x}\mid \textbf{i}_{\mathcal{P}}(U)\in B_{\mathbf j}]=\left(1\pm O(RC^{-1})\right)\dfrac{1}{|B_{\mathbf j}|}.
\]
Let $L:= L_{\mathcal{P},F}^{\mu_0}(H)$ and $\mathbf u_1=(1,0,\ldots,0)\in \mathbb{Z}^d$.
It is easy to see that $L_{\max}^d$ has exactly $r$ cosets $i\mathbf u_1+L_{\max}^d$ for $i=0,\dots, r-1$ in $\mathbb{Z}^d$, which yields \(|\mathbb{Z}^d/L_{\max}^{d}|=r\). 
Moreover, since $|Q(\mathcal{P},L)|=|L_{\max}^{d}/L|= q$, we have  
\(|\mathbb{Z}^d/L|=|\mathbb{Z}^d/L_{\max}^d|\cdot|L_{\max}^{d}/L|= rq\).
By Proposition \ref{prop:fullrank}, \(L\) is a full-rank sublattice of \(\mathbb Z^d\). In particular, the volume of a fundamental parallelepiped of \(L\) is \(\operatorname{covol}(L)=|\mathbb{Z}^d/L|=rq\). 
{For every coset \(\mathbf v+L\in \mathbb{Z}^d/L\), set \( L_{\mathbf v}:=\mathbf v+L \).
Then \(L_{\mathbf v}\) is a translate of \(L\), and  $\operatorname{covol}(L_{\mathbf v})=\operatorname{covol}(L)=rq$.} 
Note that each $\tilde{B_{\mathbf j}}$ where $\mathbf j\in[m]^d$ is a translation of $R\cdot (0,1)^d$.
Therefore, for every \(\mathbf j\in[m]^d\) and every \(L_{\mathbf{v}}\in \mathbb{Z}^d/L\),  using Lemma~\ref{lem:lattice_points}, we obtain that 
\[
|\tilde{B_{\mathbf j}}\cap L_{\mathbf{v}}|=\left(1\pm O(R^{-1})\right)\dfrac{\operatorname{mes}(\tilde{B_{\mathbf j}})}{rq}.
\] 
Moreover, we have $|B_{\mathbf j}|=\operatorname{mes}(\tilde B_{\mathbf j})=R^d$. 
As $B_{\mathbf j}=\tilde{B_{\mathbf j}}\cap \mathbb Z^d$, 
\[
|B_{\mathbf j}\cap L_{\mathbf{v}}|=|\tilde{B_{\mathbf j}}\cap L_{\mathbf{v}}|= \left(1\pm O\left(R^{-1}\right)\right)\dfrac{|B_{\mathbf j}|}{rq}.
\]
Combining the estimates, we obtain that for each $\mathbf j\in[m]^d$,
\begin{align*}
\mathbb{P}[\textbf{i}_{\mathcal{P}}(U)\in L_{\mathbf{v}}|\textbf{i}_{\mathcal{P}}(U)\in B_{\mathbf j}]&=\sum_{\textbf{x}\in B_{\mathbf j}\cap L_{\mathbf{v}}}\mathbb{P}[\textbf{i}_{\mathcal{P}}(U)=\textbf{x}|\textbf{i}_{\mathcal{P}}(U)\in B_{\mathbf j}]\\
&=(1\pm O({R}^{-1}))\dfrac{|B_{\mathbf j}|}{rq}\cdot\left(1\pm O(RC^{-1})\right)\dfrac{1}{|B_{\mathbf j}|}=\frac{1}{rq} +O({R}^{-1}),
\end{align*}
as $1/C\ll1/R$.
%
Recalling $W=\bigcup_{\mathbf j\in [m]^d} B_{\mathbf j}$, we have
\begin{align*}
\mathbb{P}[\textbf{i}_{\mathcal{P}}(U)\in L_{\mathbf{v}}\cap W]&=\sum_{\mathbf j\in[m]^d}\mathbb{P}[\textbf{i}_{\mathcal{P}}(U)\in B_{\mathbf j}]\cdot\mathbb{P}[\textbf{i}_{\mathcal{P}}(U)\in L_{\mathbf{v}}|\textbf{i}_{\mathcal{P}}(U)\in B_{\mathbf j}]\\
&=\sum_{\mathbf j\in[m]^d}\mathbb{P}[\textbf{i}_{\mathcal{P}}(U)\in B_{\mathbf j}]\left(\frac{1}{rq}+O({R}^{-1})\right)=\mathbb{P}[\textbf{i}_{\mathcal{P}}(U)\in W]\cdot\left(\frac{1}{rq}+O({R}^{-1})\right).
\end{align*}
As $\mathbb{P}[\textbf{i}_{\mathcal{P}}(U)\in W]=1-O\left(\exp{\left(-C^2\right)}\right)$, we get $\mathbb{P}[\textbf{i}_{\mathcal{P}}(U)\in L_{\mathbf{v}}\cap W]=\frac{1}{rq}+O({R}^{-1})$.
Finally,
\[
\mathbb{P}[\textbf{i}_{\mathcal{P}}(U)\in  L_{\mathbf{v}}]
= \mathbb{P}[\textbf{i}_{\mathcal{P}}(U)\in L_{\mathbf{v}}\setminus W]+\mathbb{P}[\textbf{i}_{\mathcal{P}}(U)\in L_{\mathbf{v}}\cap W]=\frac{1}{rq}+ O({R}^{-1}). 
\]
Since $1/R\ll \varepsilon$, we obtain $\mathbb{P}[\textbf{i}_{\mathcal{P}}(U)\in  L_{\mathbf{v}}]=\frac{1}{rq}\pm\varepsilon$.\qedhere
\end{proof}

\section{Sharpness of the winning probability}
\label{sec:win}
We now show that the winning probability in Theorem \ref{thm:unbalancedfactor} -- \ref{thm:perfectmatching} are asymptotically best possible.
Note that because all these theorems are derived by Theorem \ref{thm:main_structural_theorem_ell_degree}, it suffices to show that there exist infinitely many $k$-graphs satisfying the \textit{in particular} part of the theorem.

\medskip
\noindent
\textbf{For Theorem \ref{thm:unbalancedfactor}.}
Let $F$ be a graph. If $F$ is balanced, then take any graph $H$ with $\delta(H)\ge (1-1/\chi(F)+o(1))n$ and in our proof of Theorem \ref{thm:unbalancedfactor} we indeed take $q=1$.
Then by Chernoff's bound, $H[p]$ inherits the minimum degree condition of $H$ and thus has an $F$-factor if and only if its order is divisible by $r=|V(F)|$, which occurs with probability $1/r\pm o(1)$.

If $F$ is unbalanced, then take $H$ to be balanced complete $\chi(F)$-partite graphs, which satisfies that $\delta(H)\ge (1-1/\chi(F))n \ge (1-1/\chi_{cr}(F)+o(1))n$.
Let $\mathcal P$ be the natural partition of $V(H)$ and note that it satisfies all assumptions of Theorem \ref{thm:main_structural_theorem_ell_degree}.
Moreover, the index vector of every copy of \(F\) in \(H\) is \(\mu\)-robust for some small \(\mu>0\). 

\medskip
\noindent
\textbf{For Theorems \ref{thm:ell_degree} and \ref{thm:perfectmatching}.}
For the other two theorems, we use the following construction.
For Theorem \ref{thm:ell_degree}, set \(d:=\lceil\frac{1}{c^*_{k,\ell}+\gamma}\rceil-1\), so that \(1/d>c^*_{k,\ell}+\gamma\). For Theorem \ref{thm:perfectmatching}, set \(d:=s-1\) and take \(0<\gamma<1/(s(s-1))\). 

Partition the vertex set into $d$ parts $\mathcal P=\{V_1,\dots,V_d\}$ whose sizes differ by at most one. Define a $k$-graph $H$ on this vertex set by declaring a $k$-set $e$ to be an edge if and only if $\sum_{i=1}^{d}(i-1)|e\cap V_i|\equiv 0\pmod d$. When $d=1$, $H$ is the complete $k$-graph.
The key property is that for every $(k-1)$-set $S\subseteq V(H)$, there is a unique $j\in[d]$ such that $(j-1)+\sum_{i=1}^{d}(i-1)|S\cap V_i|\equiv 0\pmod d$. Consequently, $N_H(S)=V_j\setminus S$, and hence $\delta_{k-1}(H)\ge \lfloor n/d\rfloor-(k-1)\ge n/d-k$. 
For Theorem \ref{thm:perfectmatching}, since \(d=s-1\) and $1/(s-1)-1/s=1/(s(s-1))>\gamma$, we obtain $\delta_{k-1}(H)\ge (1/s+\gamma)n$ for all sufficiently large $n$. 
More generally, for every \(\ell\)-set \(T\subseteq V(H)\), we have \(\deg_H(T)\ge \frac{1}{k-\ell}\binom{n-\ell}{k-\ell-1}\cdot\delta_{k-1}(H)=\left(\frac{1}{d}-o(1)\right)\binom{n-\ell}{k-\ell}\). 
Thus, with the above choice of \(d\) for Theorem~\ref{thm:ell_degree}, we have \(\delta_{\ell}(H)\ge(c^*_{k,\ell}+\gamma)\binom{n-\ell}{k-\ell}\). 

Note that by definition, given a $k$-vector $\vec{v}\in \mathbb{Z}^d$, the  $k$-tuples with index vector $\vec{v}$ are either all edges of $H$ or all non-edges of $H$.
Since the parts of \(\mathcal P\) have sizes \(n/d\pm1\), this implies that the index vector of every edge of \(H\) is \(\mu\)-robust for some sufficiently small \(\mu=\mu(k,d)>0\).

Finally, it is easy to see that the constructions above are insensitive to small perturbations of part size up to $o(n)$ (in fact, one may also delete a small number of edges in each neighborhood), which generates infinitely many examples.

\section{Acknowledgement}
We would like to sincerely thank Peter Keevash for pointing out Lemma \ref{lem:lattice_points} from \cite{tao2006additive} to us.

\bibliographystyle{plain}
\bibliography{references}

@article{han2020complexity,
  title={{The complexity of perfect matchings and packings in dense hypergraphs}},
  author={Han, J. and Treglown, A.},
  journal={J. Combin. Theory Ser. B},
  volume={141},
  pages={72--104},
  year={2020},
  publisher={Elsevier}
}

@incollection{mcdiarmid,
    AUTHOR = {McDiarmid, C.},
     TITLE = {On the method of bounded differences},
 BOOKTITLE = {Surveys in combinatorics, 1989 ({N}orwich, 1989)},
    SERIES = {London Math. Soc. Lecture Note Ser.},
    VOLUME = {141},
     PAGES = {148--188},
 PUBLISHER = {Cambridge Univ. Press, Cambridge},
      YEAR = {1989},
      ISBN = {0-521-37823-0},
   MRCLASS = {05C80 (60E15 60F10 60G42)},
  MRNUMBER = {1036755},
MRREVIEWER = {Alan\ M.\ Frieze},
}

@book{tao2006additive,
    AUTHOR = {Tao, T. and Vu, V.},
     TITLE = {Additive combinatorics},
    SERIES = {Cambridge Studies in Advanced Mathematics},
    VOLUME = {105},
 PUBLISHER = {Cambridge University Press, Cambridge},
      YEAR = {2006},
     PAGES = {xviii+512},
      ISBN = {978-0-521-85386-6; 0-521-85386-9},
}

@article {MR144363,
    AUTHOR = {Hoeffding, W.},
     TITLE = {Probability inequalities for sums of bounded random variables},
   JOURNAL = {J. Amer. Statist. Assoc.},
    VOLUME = {58},
      YEAR = {1963},
     PAGES = {13--30},
}

@incollection {MR297607,
AUTHOR = {Hajnal, A. and Szemer\'edi, E.},
TITLE = {Proof of a conjecture of {P}. {Erd\H{o}s}},
BOOKTITLE = {Combinatorial theory and its applications, {I}-{III} ({P}roc.
              {C}olloq., {B}alatonf\"ured, 1969)},
    SERIES = {Colloq. Math. Soc. J\'anos Bolyai},
    VOLUME = {4},
     PAGES = {601--623},
 PUBLISHER = {North-Holland, Amsterdam-London},
      YEAR = {1970},
   MRCLASS = {05C99},
  MRNUMBER = {297607},
MRREVIEWER = {J.\ W.\ Moon},
}

@article {MR4935989,
    AUTHOR = {Dragani\'c, N. and Keevash, P. and M\"uyesser, A.},
     TITLE = {Cyclic subsets in regular {D}irac graphs},
   JOURNAL = {Int. Math. Res. Not.},
      YEAR = {2025},
    NUMBER = {14},
}

@article{MR1684620,
    AUTHOR = {Erd\H{o}s, P.},
     TITLE = {A selection of problems and results in combinatorics},
      NOTE = {Recent trends in combinatorics (M\'atrah\'aza, 1995)},
   JOURNAL = {Combin. Probab. Comput.},
    VOLUME = {8},
      YEAR = {1999},
    NUMBER = {1-2},
     PAGES = {1--6},
      ISSN = {0963-5483,1469-2163},
   MRCLASS = {05-XX},
  MRNUMBER = {1684620},
MRREVIEWER = {Andr\'as\ Gy\'arf\'as},
       DOI = {10.1017/S0963548398003496},
       URL = {https://doi.org/10.1017/S0963548398003496},
}

@article {MR5041387,
    AUTHOR = {Hunter, Z. and Liu, T. and Milojevi\'c, A. and
              Sudakov, B.},
     TITLE = {Cyclic subsets of tournaments},
   JOURNAL = {Random Structures Algorithms},
    VOLUME = {68},
      YEAR = {2026},
    NUMBER = {2},
     PAGES = {Paper No. e70056},
      ISSN = {1042-9832,1098-2418},
   MRCLASS = {05C45 (05C20)},
  MRNUMBER = {5041387},
       DOI = {10.1002/rsa.70056},
       URL = {https://doi.org/10.1002/rsa.70056},
}

@article {Yang2025fator,
    AUTHOR = {Sun, W. and Wei, S. and Yang, D.},
     TITLE = {Clique factors in random samplings of regular graphs},
   JOURNAL = {arXiv:2512.20287v1},
      YEAR = {2025},
}

@article {MR707416,
    AUTHOR = {Kirkpatrick, D. G. and Hell, P.},
     TITLE = {On the complexity of general graph factor problems},
   JOURNAL = {SIAM J. Comput.},
    VOLUME = {12},
      YEAR = {1983},
    NUMBER = {3},
     PAGES = {601--609},
      ISSN = {0097-5397},
   MRCLASS = {68R10 (05C70 90B10)},
  MRNUMBER = {707416},
       DOI = {10.1137/0212040},
       URL = {https://doi.org/10.1137/0212040},
}

@article{KSS2001factor,
    AUTHOR = {Koml\'os, J. and S\'ark\"ozy, G. N. and Szemer\'edi,
              E.},
     TITLE = {Proof of the {A}lon-{Y}uster conjecture},
      NOTE = {Combinatorics (Prague, 1998)},
   JOURNAL = {Discrete Math.},
    VOLUME = {235},
      YEAR = {2001},
    NUMBER = {1-3},
     PAGES = {255--269},
      ISSN = {0012-365X,1872-681X},
   MRCLASS = {05C15},
  MRNUMBER = {1829855},
       DOI = {10.1016/S0012-365X(00)00279-X},
       URL = {https://doi.org/10.1016/S0012-365X(00)00279-X},
}

@article {alonyuster,
    AUTHOR = {Alon, N. and Yuster, R.},
     TITLE = {{$H$}-factors in dense graphs},
   JOURNAL = {J. Combin. Theory Ser. B},
    VOLUME = {66},
      YEAR = {1996},
    NUMBER = {2},
     PAGES = {269--282},
}

@article {kuhnothus,
    AUTHOR = {K\"uhn, D. and Osthus, D.},
     TITLE = {The minimum degree threshold for perfect graph packings},
   JOURNAL = {Combinatorica},
    VOLUME = {29},
      YEAR = {2009},
    NUMBER = {1},
     PAGES = {65--107},
}

@article{han2017decision,
  title={{Decision problem for perfect matchings in dense \(k\)-uniform hypergraphs}},
  author={Han, J.},
  journal={Trans. Amer. Math. Soc.},
  volume={369},
  number={7},
  pages={5197--5218},
  year={2017}
}

@article{shokoufandeh2003proof,
  title={{Proof of a tiling conjecture of Koml{\'o}s}},
  author={Shokoufandeh, A. and Zhao, Y.},
  journal={Random Structures Algorithms},
  volume={23},
  number={2},
  pages={180--205},
  year={2003},
  publisher={Wiley Online Library}
}

@article {RSS2009perfect,
    AUTHOR = {R\"odl, V. and Ruci\'nski, A. and Szemer\'edi,
              E.},
     TITLE = {Perfect matchings in large uniform hypergraphs with large
              minimum collective degree},
   JOURNAL = {J. Combin. Theory Ser. A},
    VOLUME = {116},
      YEAR = {2009},
    NUMBER = {3},
     PAGES = {613--636},
}

@article {komlos2000almost,
    AUTHOR = {Koml\'os, J.},
     TITLE = {Tiling {T}ur\'an theorems},
   JOURNAL = {Combinatorica},
    VOLUME = {20},
      YEAR = {2000},
    NUMBER = {2},
     PAGES = {203--218},
}

@article {Han2015near,
    AUTHOR = {Han, J.},
     TITLE = {Near perfect matchings in {$k$}-uniform hypergraphs},
   JOURNAL = {Combin. Probab. Comput.},
    VOLUME = {24},
      YEAR = {2015},
    NUMBER = {5},
     PAGES = {723--732},
}

@article {KKM2015matchingpoly,
    AUTHOR = {Keevash, P. and Knox, F. and Mycroft, R.},
     TITLE = {Polynomial-time perfect matchings in dense hypergraphs},
   JOURNAL = {Adv. Math.},
    VOLUME = {269},
      YEAR = {2015},
     PAGES = {265--334},
}

@article {KRZ2010matching,
    AUTHOR = {Karpi\'nski, M. and Ruci\'nski, A. and Szyma\'nska,
              E.},
     TITLE = {Computational complexity of the perfect matching problem in
              hypergraphs with subcritical density},
   JOURNAL = {Internat. J. Found. Comput. Sci.},
    VOLUME = {21},
      YEAR = {2010},
    NUMBER = {6},
     PAGES = {905--924},
}

@article {sudakov2008robust,
    AUTHOR = {Sudakov, B. and Vu, V. },
     TITLE = {Local resilience of graphs},
   JOURNAL = {Random Structures Algorithms},
    VOLUME = {33},
      YEAR = {2008},
    NUMBER = {4},
     PAGES = {409--433},
}

@inproceedings{han2026perfect,
  title={Perfect Matchings in Random Sparsifications of Dense Hypergraphs},
  author={Han, J. and Zhao, J.},
  booktitle={Proceedings of the 2026 Annual ACM-SIAM Symposium on Discrete Algorithms (SODA)},
  pages={2430--2454},
  year={2026},
  organization={SIAM}
}

@article{gan2025keevash,
  title={On the {K}eevash-{K}nox-{M}ycroft {C}onjecture},
  author={Gan, L. and Han, J.},
  journal={J. Combin. Theory Ser. B},
  volume={174},
  pages={214--242},
  year={2025},
  publisher={Elsevier}
}

@article{ChangGeHanWang2022,
  author  = {Chang, Y. and Ge, H. and Han, J. and Wang, G.},
  title   = {Matching of Given Sizes in Hypergraphs},
  journal = {SIAM J. Discrete Math.},
  volume  = {36},
  number  = {3},
  pages   = {2323--2338},
  year    = {2022}
}

@article{liebenau2023asymptotic,
  title={{Asymptotic enumeration of graphs by degree sequence, and the degree sequence of a random graph}},
  author={Liebenau, A. and Wormald, N.},
  journal={J. Eur. Math. Soc.},
  volume={26},
  number={1},
  pages={1--40},
  year={2023}
}

@article{alon2012large,
  title={{Large matchings in uniform hypergraphs and the conjectures of Erd{\H{o}}s and Samuels}},
  author={Alon, N. and Frankl, P. and Huang, H. and R{\"o}dl, V. and Ruci{\'n}ski, A. and Sudakov, B.},
  journal={J. Combin. Theory Ser. A},
  volume={119},
  number={6},
  pages={1200--1215},
  year={2012},
  publisher={Elsevier}
}

@article{frankl2022erdHos,
  title={{The Erd{\H{o}}s matching conjecture and concentration inequalities}},
  author={Frankl, P. and Kupavskii, A.},
  journal={J. Combin. Theory Ser. B},
  volume={157},
  pages={366--400},
  year={2022},
  publisher={Elsevier}
}

@article{fu2026sharp,
  title   = {Sharp small-deviation inequalities for sums of independent nonnegative random variables},
  author  = {Fu, W. and Han, Y. and Wang, G. and Yan, J. and Zhang, P. and Zhou, Z.},
  journal = {arXiv:2607.23980},
  year    = {2026}
}

@article{ferber2019uniformity,
  title={Uniformity-independent minimum degree conditions for perfect matchings in hypergraphs},
  author={Ferber, A. and Jain, V.},
  journal={arXiv:1903.12207},
  year={2019}
}

@article{LiuNiuWangYan2026,
  author        = {Liu, H. and Niu, M. and Wang, L. and Yan, Z.},
  title         = {Tight Staircase Bounds for Cyclic Subsets below {D}irac's Threshold},
  journal       = {arXiv:2607.06551},
  year          = {2026},
  eprint        = {2607.06551},
}

\end{document}